\DeclareMathOperator{\lc}{lc}
\newlength{\Stmtindent}
\newenvironment{marks-algorithm}[1] {
\setlength{\Stmtindent}{20pt}
\def\Stmt{\@ifnextchar[{\@Stmt}{\@Stmt[]}}
\def\@Stmt[##1]{\par\hspace*{\Stmtindent}\llap{##1\enspace\hfil}%
\hangindent\Stmtindent\ignorespaces}
\def\Inspec{\par\noindent\hangindent0pt\noindent%
\rlap{\@alginheader}\gdef\@alginheader{}%
\hspace*{2.75\Stmtindent}\llap{\raise2.5pt\hbox{\tiny$\blacktriangleright$}\enspace}%
\hangindent2.75\Stmtindent\ignorespaces}%
\def\Outspec{\par\hangindent0pt\noindent%
\rlap{\@algoutheader}\gdef\@algoutheader{}%
\hspace*{2.75\Stmtindent}\llap{\raise2.5pt\hbox{\tiny$\blacktriangleright$}\enspace}%
\hangindent2.75\Stmtindent\ignorespaces}
\def\>{\advance\hangindent\Stmtindent\hspace*{\Stmtindent}\ignorespaces}
\def\@alginheader{Input:} \def\@algoutheader{Output:}
\parindent=0pt \parskip=1pt
\medbreak\par {\bf Algorithm: }{\tt #1} } {\smallbreak}
\DeclareMathAlphabet{\mathbold}{OML}{cmm}{b}{it}
\newcommand{\avec}{{\vec a}}
\newcommand{\wvec}{{\vec{w}}}
\newcommand{\uvec}{{\vec{u}}}
\newcommand{\nxn}{{n\times n}}
\newcommand{\Fr}{\mathbb{F}_r}
\newcommand{\Fq}{\mathbb{F}_q}
\newcommand{\Frbar}{\overline{\mathbb{F}}_{r}}
\newcommand{\Fqbar}{\overline{\mathbb{F}}_{q}}
\newcommand{\Frbarn}{\overline{\mathbb{F}}^n_{r}}
\newcommand{\divs}{{\mskip3mu|\mskip3mu}}
\newcommand{\fhat}{{f^*}}
\def\iso{\cong}
\newcommand{\ghat}{{g^*}}
\newcommand{\hhat}{{h^*}}
\newcommand{\E}{{\mathsf{E}}}
\newcommand{\F}{{\mathsf{F}}}
\newcommand{\Aq}{{{\Fq[x; r]}}}
\newcommand{\cAq}{{\Fr[x;q]}}
\newcommand{\calB}{{\mathcal B}}
\newcommand{\Hide}[1]{}
\DeclareMathOperator{\diag}{diag}
\DeclareMathOperator{\gcrc}{gcrc}
\DeclareMathOperator{\lclc}{lclc}
\DeclareMathOperator{\mclc}{mclc}
\begin{document}




\title{Composition collisions \\
and projective polynomials 
}
\author{Joachim von~zur~Gathen \\
B-IT, Universit\"at Bonn\\
D-53113 Bonn, Germany\\
\email{gathen@bit.uni-bonn.de}\\
\url{http://cosec.bit.uni-bonn.de/} \\
\and
Mark Giesbrecht\\
Cheriton School of Computer Science\\
University of Waterloo, Waterloo, ON, N2L 3G1 Canada\\
\email{mwg@cs.uwaterloo.ca}\\
\url{http://www.cs.uwaterloo.ca/~mwg}\\
\and
Konstantin Ziegler\\
B-IT, Universit\"at Bonn\\
D-53113 Bonn, Germany\\
\email{zieglerk@bit.uni-bonn.de}\\
\url{http://cosec.bit.uni-bonn.de/}\\
}

\maketitle

\begin{abstract}
The functional decomposition of polynomials has been a topic of great
interest and importance in pure and computer algebra and their applications.
The structure of compositions of (suitably normalized) polynomials $f = g
\circ h$ in
$\FF_q[x]$ is well understood in many cases, but quite poorly when the degrees of both
components
are divisible by the characteristic $p$.  This work investigates the
decomposition of polynomials whose degree is a power of $p$. 
An (equal-degree) \emph{$i$-collision} is a set of $i$ distinct pairs $(g,h)$ of
polynomials, all with the same composition and $\deg g$ the same for all
$(g,h)$.  \cite{abh97} introduced the \emph{projective polynomials}
$x^{n}+ax+b$, where $n$ is of the form $(r^{m}-1)/(r-1)$.  Our first tool is a bijective correspondence between
$i$-collisions of certain additive trinomials, projective polynomials with $i$
roots, and linear spaces with $i$ Frobenius-invariant lines.

\cite{blu04a} has determined the possible number of roots of projective
polynomials for $m=2$, and how many polynomials there are with a prescribed
number of roots.  We generalize her first result to arbitrary $m$, and provide
an alternative proof of her second result via elementary linear algebra.

If one of our additive trinomials is given, we can efficiently compute the
number of its decompositions, and similarly the number of roots of a
projective polynomial.  The runtime of these algorithms depends polynomially on the sparse
input size, and thus on the input degree only logarithmically.

For non-additive polynomials, we present certain decompositions and conjecture
that these comprise all of the prescribed shape.
\end{abstract}

\textbf{Keywords.}   Univariate polynomial decomposition, additive
polynomials, projective polynomials.

\textbf{2010 Mathematics Subject Classification.}  Primary 68W30;
Secondary 12Y05


\section{Introduction}

The \emph{composition} of two polynomials $g,h \in F[x]$ over a field
$F$ is denoted by $f= g \circ h= g(h)$, and then $(g,h)$ is a
\emph{decomposition} of $f$. In the 1920s, Ritt, Fatou, and Julia
studied structural properties of these decompositions over
$\mathbb{C}$, using analytic methods. Particularly important are two
theorems by Ritt on uniqueness, in a suitable sense, of
decompositions, the first one for (many) indecomposable components and
the second one for two components, as above.

The theory was algebraicized by \cite{dorwha74}, \cite{sch82c,
sch00c}, and others. Its use in a cryptographic context was
suggested by \cite{cad85}. In computer algebra, the method of
\cite{barzip85} requires exponential time but works in all
situations.  A breakthrough result of \cite{kozlan89} was their
polynomial-time algorithm to compute decompositions. One has to distinguish  between the \emph{tame case}, where the characteristic $p$
does not divide $\deg g$ and this algorithm works (see \cite{gat90c}),
and the \emph{wild case}, where $p$ divides $\deg g$ (see
\cite{gat90d}).  In the wild case, considerably less is known, mathematically and computationally. The algorithm of \cite{zip91} for
decomposing rational functions suggests that the block decompositions
of \cite{lanmil85} (for determining subfields of algebraic number
fields) can be applied to the wild case.  \cite{Gie98} provides
fast algorithms for the decomposition of additive (or linearized)
polynomials, in some sense an ``extremely wild'' case.  We exploit
 their elegant structure here.  An
enumeration of number or structure of solutions in the wild case has
defied both algebraic and computational analysis, and we attempt to
address this here.  Moreover, many of the algorithms we present here
are sensitive to the sparse size of the input, as opposed to the
degree, a property not exploited in the above-mentioned papers.

The task of counting compositions over a finite field of characteristic $p$
was first considered in \cite{gie88}. \Citet{gat09b} presents general
approximations to the number of decomposable polynomials. These come
with satisfactory (rapidly decreasing) relative error bounds except
when $p$ divides $n = \deg f$ exactly twice. The goal of the present
work is to study the easiest of these
difficult cases, namely when $n = p^{2}$ and hence $\deg g =\deg h =
p$.  However, many of our results are valid for $n=r^{2}$ for a power $r$ of
$p$, and are stated accordingly.

We introduce the notion of an equal-degree \emph{$i$-collision} of
decompositions, which is a set of $i$ pairs $(g,h)$, all with the same
composition and $\deg g$ the same for all $(g,h)$.  These are the only
collisions we consider in this paper, and we omit the adjective
``equal-degree'' in the text.  An $i$-collision is \emph{maximal} if it is not
contained in an $(i+1)$-collision.  After some preliminaries in
\autoref{sec:InNo}, we start in \autoref{sec:proj-polyn} with the particular
case of additive polynomials. We relate the decomposition question to one
about eigenspaces of the linear function given by the Frobenius map on the
roots of $f$. This yields a complete description of all decompositions of
certain additive trinomials in terms of the roots of the \emph{projective
  polynomials} $x^{n}+ax+b$, introduced by \cite{abh97}, where $n$ is of the
form $(r^{m}-1)/(r-1)$.  We prove that maximal $i$-collisions of additive
polynomials of degree $r^{2}$ exist only when $i$ is $0$, $1$, $2$ or $r+1$,
count their numbers exactly, and show their relation to the roots of
projective polynomials for $m=2$.  In this case \cite{blu04a} has determined,
the number of roots that can occur, namely $0$, $1$, $2$, or $r+1$, and also
for how many coefficients $(a,b)$ each case happens. We obtain elementary
proofs of a generalization of her first result to arbitrary $m$ and of her
counts for $m=2$.  From the proof we obtain a fast algorithm (polynomial in
$r$ and $\log q$) to count the number of roots over $\FF_{q}$, called
\emph{rational} roots.  More generally, in \autoref{sec:algos} an algorithm is
provided to enumerate the possible number of right components of an additive
polynomial of any degree.  A fast algorithm is then presented to count the
number of right components of an additive polynomial of any degree, which is
shown to be equivalent to counting rational roots of projective polynomials of
arbitrary degree.  We also demonstrate theorems and fast algorithms to count
and construct indecomposable additive polynomials of prescribed degree.  In
\autoref{sec:proj-polyn-roots} we actually construct and enumerate all
additive polynomials of degree $r^{2}$ with 0, 1, 2, or $r+1$ collisions and establish connections to the
counts of \cite{blu04a} and \cite{gat09d}.

In \autoref{sec:general} we move from additive to general polynomials.
Certain $(r+1)$-collisions are derived from appropriate roots of projective
polynomials. We conjecture that these are all possibilities and present
results on general $i$-collisions with $i \geq 2$ for $r=p$ that support our
conjecture.


\section{The basic setup}
\label{sec:InNo}

We consider polynomials $f,g,h\in\FF_q[x]$ over a finite field $\FF_q$ of
characteristic $p$.  Then $f= g \circ h = g(h)$ is the \emph{composition} of
$g$ and $h$, $(g,h)$ is a \emph{decomposition} of $f$, and $g$ and
$h$ are a \emph{left} and \emph{right component}, respectively, of $f$.  Furthermore, $f$ is
\emph{decomposable} if such $(g,h)$ exist with $\deg g, \deg h \geq
2$, and \emph{indecomposable} otherwise.

We call $f$ \emph{original} if its graph passes through the origin,
that is, if $f(0)=0$.  Composition with linear polynomials introduces
inessential ambiguities in decompositions.  If $f = g\circ h, a \in
\FF_{q}^{\times}$,
and $b\in \FF_{q}$, then $af+b=(ag+b)\circ h$.  Thus we may assume $f$
to be monic original.  Furthermore, if $a = \lc(h)^{-1}$ and
$b=-ah(0)$, then $f=g\circ h = g((x-b)a^{-1})\circ (ah+b)$ and the right
component is monic original.  Thus we
may also assume $h$ to be monic original, and then $g$ is
so automatically.  We thus consider the following two sets:
\begin{align*}
P_n (\FF_{q}) & =  \{ f \in \FF_q[x] \colon \text{$f$ is monic and
original of degree $n$}\}, \\
D_n (\FF_{q}) & = \{ f \in P_n (\FF_{q}) \colon \text{$f$ is
decomposable}\}.
\end{align*}
We usually leave out the argument $\FF_{q}$.  The size of the first set is
$\# P_n = q^{n-1}$,
and determining (exactly or approximately) $\# D_{n}$ is one of the
goals in this business.  The number of all or all decomposable
polynomials of degree $n$, not restricted to $P_{n}$, is $\# P_{n}$ or $\#
D_{n}$, respectively, multiplied by $q(q-1)$.


First, we consider the additive or linearized polynomials, which have a mathematically rich and
highly useful structure in finite fields.  First introduced in
\cite{Ore33b}, they play an important role in the theory of finite and
function fields, and they have found many applications in codes and
cryptography.  See \cite{LidNie83}, Chapter 3, for an introduction and
survey over finite fields.

We will focus on additive polynomials over finite fields, though some
of these results will hold more generally in characteristic $p$.  For
convenience we assume that $r$ is a power of $p$ and $q=r^d$ for some
$d\in\ZZ_{>0}$.  Let
\[
\Aq = \{ \sum_{0\leq i\leq n} a_i x^{r^i} \colon
n\in\ZZ_{\geq 0},\ a_0,\ldots,a_n\in \Fq \}
\]
be the ring of \emph{$r$-additive} (or \emph{linearized}, or simply
\emph{additive}) polynomials over $\Fq$.  These are the polynomials
such that $f(\alpha a+\beta b)=\alpha f(a)+\beta f(b)$ for any
$\alpha,\beta\in\Fr$, and for any $a,b\in\Fqbar$, where $\Fqbar$ is an
algebraic closure of $\Fq$.  The additive polynomials form a
(non-commutative) ring under the usual addition and composition. It is
a principal left (and right) ideal ring with a left (and right)
Euclidean algorithm.

An additive polynomial is squarefree if $f'$ (the derivative of $f$)
is nonzero, meaning that the linear coefficient
of $f$ is nonzero.  If $f\in\Aq$ is squarefree of degree $r^n$, then
the set of all roots of $f$ form an $\Fr$-vector space in $\Frbar$ of
dimension $n$.  Conversely, for any finite dimensional $\Fr$-vector
space $W\subseteq\Frbar$, the lowest degree polynomial
$f = \prod_{a \in W} (x-a) \in\Frbar[x]$ with $W$ as its roots is a squarefree
$r$-additive polynomial.  Let $\sigma_{q}$ denote
the $q$th power Frobenius automorphism on $\Fqbar$ over $\Fq$.  If $W$ is invariant under $\sigma_{q}$, then $f\in\Aq$.

We have 
\begin{equation*}
x^{p} \circ h = \sigma_{p} (h) \circ x^{p}  
\end{equation*}
for $h \in \Fq[x]$, where
$\sigma_{p}$ is the Frobenius automorphism on $\FF_{q}$ over $\FF_{p}$, which
extends to polynomials coefficientwise.  If $\deg h = p$ and $h \neq x^{p}$, this is a
$2$-collision and called a \emph{Frobenius
collision}.  It is never part of $i$-collisions with $i\geq 3$.

\begin{lemma}
\label{lem:components_vs_subspaces}  Let $S \in \Fr^{n \times n}$ be the
matrix representing the Frobenius $\sigma_{q}$.  There is a bijection
between $S$-invariant subspaces of $\Fr^{n\times 1}$ and right
components $h\in\Aq$ of $f$.
\end{lemma}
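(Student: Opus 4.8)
The plan is to translate the algebraic relation ``$h$ is a right component of $f$'' into the geometric relation ``$\ker h$ is a subspace of $\ker f$'', and then to single out those subspaces that descend to $\Fq$, which will turn out to be exactly the $S$-invariant ones. I assume throughout, in line with the paper's normalization, that $f$ is squarefree $r$-additive of degree $r^n$ with coefficients in $\Fq$ and that the right components under consideration are monic.

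First I would fix $V=\ker f\subseteq\Frbar$, the root space of $f$. By the facts recalled in this section, $V$ is an $n$-dimensional $\Fr$-vector space, and since the coefficients of $f$ lie in $\Fq$ the Frobenius $\sigma_q$ sends roots to roots, so $V$ is $\sigma_q$-invariant. As $q=r^d$, the map $\sigma_q$ fixes $\Fr$ pointwise, hence its restriction $\sigma_q|_V$ is an $\Fr$-linear endomorphism of $V$; fixing an $\Fr$-basis identifies $V$ with $\Fr^{n\times 1}$ and $\sigma_q|_V$ with the matrix $S\in\Fr^{n\times n}$. Under this identification an $\Fr$-subspace $W\subseteq V$ is $\sigma_q$-invariant if and only if it is $S$-invariant, so the desired bijection will reduce to a statement about $\sigma_q$-invariant subspaces of $V$.

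Next I would set up the bijection between $\Fr$-subspaces $W\subseteq V$ and monic right components of $f$ in $\Frbar[x]$, sending $W$ to $h_W=\prod_{a\in W}(x-a)$ and sending a component $h$ back to $\ker h$. Since $h_W$ is the unique monic $r$-additive polynomial with root space exactly $W$, and any monic right component is itself squarefree $r$-additive, these two maps are mutually inverse once one knows that a monic $r$-additive $h$ right-divides $f$ precisely when $\ker h\subseteq\ker f=V$. The implication ``$\Leftarrow$'' is the crux: because the $r$-additive polynomials form a ring with a right Euclidean algorithm, $W\subseteq V$ forces $h_W$ to right-divide $f$, giving $f=g\circ h_W$; the implication ``$\Rightarrow$'' is immediate since $f=g\circ h$ annihilates $\ker h$. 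Finally, descending to $\Fq$: a monic component $h=h_W$ lies in $\Aq$ if and only if $W=\ker h$ is $\sigma_q$-invariant --- one direction is the fact recalled just before the lemma, and the converse holds because $\Fq$-coefficients make $\sigma_q$ permute the roots of $h$; moreover, when $h_W\in\Aq$ the left quotient $g$ in $f=g\circ h_W$ is obtained by right division inside $\Aq$ and hence also has coefficients in $\Fq$, so $h_W$ is a genuine right component over $\Fq$. Composing this descent with the subspace--component correspondence and the identification $V\cong\Fr^{n\times 1}$ yields the asserted bijection between $S$-invariant subspaces and right components $h\in\Aq$ of $f$. The main obstacle is the correctness of the correspondence over $\Frbar$, namely the Ore-type equivalence between right divisibility of $r$-additive polynomials and containment of root spaces; the descent to $\Fq$ is then the cited invariance property, and the passage from $\sigma_q$-invariance to $S$-invariance is a routine reinterpretation.
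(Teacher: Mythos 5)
Your proposal is correct and follows essentially the same route as the paper: identify the root space $V_f$ with $\Fr^{n\times 1}$ via an $\Fr$-basis, match subspaces with monic additive right components through their root spaces (the Ore-type divisibility argument you spell out is exactly what the paper delegates to the cited facts about $\Aq$), and observe that $h\in\Aq$ precisely when its root space is $\sigma_q$-invariant, i.e.\ $S$-invariant. The only difference is that the paper appends a one-line reduction of the non-squarefree case to the squarefree one via $f=g\circ x^{r^t}=x^{r^t}\circ h$, which you omit by assuming squarefreeness outright; otherwise the arguments coincide.
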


\begin{proof}
Assume that $f\in\Aq$ is squarefree of degree $r^n$.  
Let $v_1,\ldots,v_n\in\Frbar$ form an $\Fr$-basis for $V_f$,
and identify $a=\sum_{1\leq i\leq n}\alpha_i v_i\in V_f$ with
$\avec=(\alpha_1,\ldots,\alpha_n)\in\Fr^n$.  Each $\Fr$-subspace $W$
of $V_f$ corresponds to an additive right component $h$ of
$f$ which has $W$ as its set of roots. It is relatively straightforward to
derive that all components of an additive polynomial are again
additive \cite[Theorem 3.3]{gie88}.  Finally, we have $h \in \Aq$ if and only if $W$ is
invariant under $\sigma_{q}$.


Generally, if $f \in \Aq$ is not squarefree, we can write it as $f=g\circ
x^{r^t}$ for a squarefree $g\in\Aq$, and then $f=x^{r^t}\circ h$ for
some squarefree $h\in\Aq$ (see \cite{gie88}, Sections 3--4).
\end{proof}

We
present two related approaches to investigate $f \in \Aq$ of degree $r^{2}$.  The first, working with normal forms
of the Frobenius operator on the space of roots of $f$, gives a
straightforward classification of the number of possible
decompositions, though provides less insight into how many polynomials
fall into each class.  The second uses more structural information
about the ring of additive polynomials and provides complete
information on both the number of decompositions and the number of
polynomials with each type of decomposition.

We can easily classify all possible collisions
in the non-squarefree case at degree $r^{2}$ as follows.

\begin{lemma}
\label{lem:squarefull}
Let $f=x^{r^2}+ax^r\in\Aq$ for $a\in\Fq$.  Then $f$ has a
$2$-collision if $a\neq 0$ and a unique decomposition if $a=0$.
\end{lemma}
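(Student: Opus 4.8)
The plan is to reduce the statement to a direct coefficient comparison. Since $f\in\Aq$, every component of $f$ is again additive (as used above, \cite[Theorem 3.3]{gie88}), and by the normalization of \autoref{sec:InNo} I may take $g$ and $h$ to be monic original. Because $\deg f = r^2$ and the degrees of additive components multiply, the only proper decomposition has $\deg g = \deg h = r$. Every monic original additive polynomial of degree $r$ has the shape $x^r + cx$ with $c\in\Fq$, so I would write $g = x^r + ux$ and $h = x^r + vx$ with unknowns $u,v\in\Fq$ and determine for which $(u,v)$ one has $g\circ h = f$.

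Next I would compute the composition. As $r$ is a power of $p$, the Frobenius identity gives $(x^r + vx)^r = x^{r^2} + v^r x^r$, so that
\[
g\circ h = (x^r + vx)^r + u(x^r + vx) = x^{r^2} + (v^r + u)\,x^r + uv\,x.
\]
Comparing coefficients with $f = x^{r^2} + a x^r$ (whose $x$-coefficient is $0$) yields the system $v^r + u = a$ and $uv = 0$. The condition $uv = 0$ bifurcates into two cases: either $u = 0$, forcing $v^r = a$, or $v = 0$, forcing $u = a$.

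At this point the one genuine field-theoretic input enters: the $r$-th power map on $\Fq$ is a bijection (it is an automorphism, since $q$ is a power of $r$), so $v^r = a$ has exactly one solution $v\in\Fq$. Hence the case $u = 0$ produces the single pair $(x^r,\, x^r + vx)$ with $v^r = a$, and the case $v = 0$ produces $(x^r + ax,\, x^r)$; these exhaust all decompositions. If $a\neq 0$, the two pairs are distinct because their left components $x^r$ and $x^r + ax$ differ, giving a $2$-collision. If $a = 0$, both cases collapse to the single pair $(x^r, x^r)$, so $f = x^{r^2}$ has a unique decomposition.

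I expect no serious obstacle here; the content is essentially the coefficient bookkeeping above. The only points that demand care are (i) justifying that it suffices to consider additive, monic original $g,h$ of degree $r$, and (ii) confirming both exhaustiveness and distinctness — in particular that the two candidate pairs coincide precisely when $a = 0$, which is exactly the dividing line between the $2$-collision and the unique decomposition claimed in the statement.
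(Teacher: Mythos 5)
Your proposal is correct and matches what the paper actually does: the lemma is stated without a written proof, being exactly the Frobenius collision $x^{r^2}+ax^r=(x^r+ax)\circ x^r=x^r\circ(x^r+a^{1/r}x)$ described in \autoref{sec:InNo}, and your coefficient comparison recovers precisely these two pairs while making exhaustiveness and the collapse at $a=0$ explicit. The only point to keep in mind is that the reduction to $\deg g=\deg h=r$ (rather than other $p$-power splittings of $r^2$ when $r$ is not prime) rests on the paper's convention that the components are taken in $\Aq$, so their degrees are powers of $r$.
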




Closely related to decompositions are the following objects.  Let $r$ be a power of $p$ and $m\geq 2$.  \cite{abh97} introduced the
\emph{projective polynomials}
\begin{equation*}
\Psi_m^{(a,b)}=x^{(r^{m}-1)/(r-1)}+ax+b  
\end{equation*}
which have, over
appropriate fields, nice Galois groups such as general linear or
projective general linear groups.  We assume $q$ to be a power of $r$, and
have for $m=2$
\begin{equation}
\label{eq:proj}
\Psi_2^{(a,b)}=x^{r+1}+ax+b
\end{equation}
with $a,b\in\FF_{q}$.

In the case $ab \neq 0$, \cite{blu04a} has proven an amazingly precise
result about the number of nonzero roots of \eqref{eq:proj}.  Namely,
this number is $0, 1, 2$, or $r+1$, and she has exactly determined the
number of parameters $(a,b)$ for which each of the four possibilities
occurs.  In the case $a = 0$, the corresponding number is given in
\cite{gat08c}, Lemma~5.9.



Projective polynomials appear naturally in many situations.  \cite{Blu04b} used them to construct strong
Davenport pairs explicitly and \cite{Dil02} to build families of
difference sets with certain Singer parameters.  \cite{Blu03} proved the
equivalence of two such difference sets, using again projective polynomials
and they played a central role in tackling the question of when a quartic power
series over $\Fq$ is actually hyperquadratic \citep{Blulas06}.

\cite{HelKho08} used projective polynomials to find $m$-sequences of length
$2^{2k}-1$ and $2^{k}-1$.  \cite{HelKho10} studied projective polynomials
further, providing criteria for the number of zeros in a field of
characteristic $2$, not assuming $q$ to be a power of $r$. \cite{ZenLi08} applied the techniques of \cite{blu04a} to study the roots of
$\delta^{p^{n-k}}y^{p^{n/2-k}+1}+\gamma y + \delta$ with $\delta\gamma \neq 0$
to define a class of $p$-ary codes $C$, where $p$ is an odd prime, and completely
determine their weight distribution.

\section{Additive and projective polynomials}
\label{sec:proj-polyn}



We assume that $q=r^d$ and $r$ is a power of the characteristic $p$ of $\Fq$.
In this section we establish a general connection between decompositions of
certain additive polynomials and roots of projective polynomials, and
characterize the possible numbers of rational roots of the latter.

\begin{lemma}
\label{lem:projadd}
Let $m\geq 1$, $f =x^{r^m} + ax^r + bx$ and $h=x^{r} - h_{0}x$ be in $\Aq$
with $a, b, h_{0} \in \Fq$.  Then
$f=g\circ h$ for some $g\in\Aq$ if and only if $\Psi_m^{(a,b)}(h_{0})=0$.
\end{lemma}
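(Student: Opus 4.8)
The plan is to turn the existence of $g$ into a right-divisibility statement in $\Aq$ and then settle it by an explicit remainder computation. Both $f$ and $h$ lie in $\Aq$, and $h = x^r - h_0 x$ is ``monic'' of composition-degree $r$, so the right Euclidean algorithm available in $\Aq$ (recorded in \autoref{sec:InNo}) applies: there are a unique $g \in \Aq$ and a unique remainder of degree less than $r$, necessarily of the form $\rho x$ with $\rho \in \Fq$, such that $f = g \circ h + \rho x$. Consequently $f = g \circ h$ for some $g \in \Aq$ if and only if $\rho = 0$, and the whole lemma reduces to computing $\rho$ and identifying it with $\Psi_m^{(a,b)}(h_0)$.

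First I would record two elementary features of right division by $h$: the remainder depends additively on the dividend, and it is left-$\Fq$-linear, since composing a decomposition $P = Q \circ h + R$ on the left with $cx$ yields $cP = (cQ)\circ h + cR$ with $\deg(cR) < r$. It therefore suffices to compute the remainder $\rho_k\, x$ of each additive monomial $x^{r^k}$. Using $x^r = h + h_0 x$ together with the additivity of $x^{r^{k-1}}$ gives the key identity
\[
x^{r^k} = x^{r^{k-1}} \circ x^r = x^{r^{k-1}} \circ h + h_0^{r^{k-1}} x^{r^{k-1}},
\]
whose first summand is right-divisible by $h$; passing to remainders yields the recursion $\rho_k = h_0^{r^{k-1}} \rho_{k-1}$ with $\rho_0 = 1$, and hence $\rho_k = h_0^{(r^k-1)/(r-1)}$. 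Applying additivity and left-$\Fq$-linearity to $f = x^{r^m} + a x^r + b x$ then gives
\[
\rho = \rho_m + a \rho_1 + b \rho_0 = h_0^{(r^m-1)/(r-1)} + a h_0 + b = \Psi_m^{(a,b)}(h_0).
\]

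This settles both directions at once: $\rho = 0$ exactly when $\Psi_m^{(a,b)}(h_0) = 0$, and in that case the quotient $g$ produced by the division lies in $\Aq$ and has composition-degree $r^{m-1}$; the case $m = 1$, where $g$ is a scalar multiplication, is the base of the recursion and needs no separate treatment. I expect the only real obstacle to be bookkeeping around the non-commutativity of $\Aq$: one must divide on the \emph{right} (this is the side on which $h$ sits in $g \circ h$), and one must carry the Frobenius twists $h_0^{r^{k-1}}$ correctly through the recursion. The left-$\Fq$-linearity of the remainder is precisely what legitimizes the term-by-term computation, so I would state and use it explicitly rather than leave it implicit. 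As a cross-check, matching coefficients directly in $g(x^r - h_0 x) = x^{r^m} + a x^r + b x$ and solving the resulting triangular system for the coefficients of $g$ recovers the same condition $\Psi_m^{(a,b)}(h_0) = 0$.
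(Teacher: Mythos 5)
Your proof is correct, and it takes a genuinely different (though closely related) route from the paper's. The paper fixes the ansatz $g = x^{r^{m-1}} + g_{m-2}x^{r^{m-2}} + \cdots + g_0 x$, equates coefficients in $g\circ(x^r-h_0x)=f$, solves the resulting triangular system for the $g_i$, and reads off the single compatibility condition $b=-g_0h_0$, which after multiplying by $h_0$ becomes $\Psi_m^{(a,b)}(h_0)=0$; it treats $b=0$ and $m=1$ as separate easy cases and needs the intermediate observation $h_0\neq 0$ to divide by it. You instead invoke the right Euclidean division in $\Aq$ to write $f=g\circ h+\rho x$ uniquely, observe that the remainder map is additive and left-$\FF_q$-linear, and compute $\rho$ on the additive monomials via the recursion coming from $x^{r^k}=x^{r^{k-1}}\circ h+h_0^{r^{k-1}}x^{r^{k-1}}$, obtaining $\rho_k=h_0^{(r^k-1)/(r-1)}$ and hence $\rho=\Psi_m^{(a,b)}(h_0)$ directly. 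The underlying elimination is the same, but your packaging buys uniformity: the equivalence is immediate from uniqueness of the division, no case split for $b=0$ or $m=1$ is needed, and you never divide by $h_0$. One small point worth making explicit if you write this up: the remainder of an additive polynomial of degree less than $r$ under right division by a monic $h$ of degree $r$ is indeed of the form $\rho x$ because every nonzero additive polynomial of degree below $r$ is a scalar multiple of $x$; that is where the hypothesis that $f$ and $h$ are $r$-additive enters, exactly as in the paper.
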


\begin{proof}  For $b=0$ the claim follows from \autoref{lem:squarefull}, and
  it is readily checked for $m=1$.  Now we assume $b\neq 0$, $m\geq
  2$, and consider $g_{0}, \dots, g_{m-2} \in \Fq$ satisfying
\begin{align*}
f & = x^{r^m} + ax^r + bx \\
& = \left(x^{r^{m-1}} + g_{m-2}x^{r^{m-2}} + \cdots +
g_1 x^r + g_0 x\right) \circ \left(x^r - h_{0}x\right).
\end{align*}
Equating coefficients yields
\begin{align*}
0 & = g_{m-2}-h_{0}^{r^{m-1}}, \\
0 & = g_{i-1} - g_{i} h_{0}^{r^{i}}, \quad \text{for $2 \leq i \leq m-2$}, \\
a & = g_{0} - g_{1} h_{0}^{r}, \\
b & = - g_{0} h_{0}.
\end{align*}
Thus $h_{0} \neq 0$ and 
\begin{align}
g_i = & h_{0}^{r^{i+1}+r^{i+2}+\cdots+r^{m-1}} \quad \text{for} \quad 1 \leq i
\leq m-2, \nonumber \\
g_0 = & h_{0}^{r+r^2+\cdots+r^{m-1}}+a  = -b /h_{0} \label{eq:14}.
\end{align}
Multiplying through by $h_{0}$ concludes the proof.
\end{proof}

This lemma and \autoref{lem:components_vs_subspaces} are the building blocks for the powerful
equivalences summarized as follows.

\begin{proposition}
\label{pro:equivalent}
  Let $r$ be a power of $p$, $m \geq 2$, $a,b \in \Fq$ and $f =
  x^{r^{m}}+ax^{r}+b$.  There is a one-to-one correspondence between any two of
  the following sets.
  \begin{itemize}
  \item right components of $f$ with degree $r$,
  \item roots of $\Psi_{m}^{(a,b)}$,
  \item $\sigma_{q}$-invariant linear subspaces of $V_{f}$ with dimension $1$.
  \end{itemize}
\end{proposition}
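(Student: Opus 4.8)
The plan is to establish the three pairwise correspondences by chaining together the two lemmas already proved and adding one short bridge that I will verify directly. Note first that the polynomial $f = x^{r^m} + ax^r + b$ in the proposition is monic original only after we strip the constant; more precisely, the additive polynomial relevant to \autoref{lem:projadd} is $\widetilde f = x^{r^m} + ax^r + bx$, and the correspondence between roots of $\Psi_m^{(a,b)}$ and the degree-$r$ right components of $\widetilde f$ is exactly the content of that lemma. So the first move is to make explicit that the degree-$r$ right components $h \in \Aq$ of $\widetilde f$ are precisely the additive trinomials $h = x^r - h_0 x$ with $h_0 \in \Fq$, and that by \autoref{lem:projadd} each such $h$ is a right component if and only if $\Psi_m^{(a,b)}(h_0) = 0$. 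This gives a bijection $h \mapsto h_0$ between the first set and the second.

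Next I would connect right components to $\sigma_q$-invariant lines via \autoref{lem:components_vs_subspaces}. That lemma furnishes a bijection between $S$-invariant subspaces of $\Fr^{n \times 1}$ (equivalently, $\sigma_q$-invariant $\Fr$-subspaces $W \subseteq V_{\widetilde f}$) and additive right components $h$ of $\widetilde f$, where $W$ is the root space of $h$. The degree of $h$ is $r^{\dim W}$, so degree-$r$ components correspond exactly to one-dimensional $\sigma_q$-invariant subspaces. This yields the bijection between the first and third sets, and composing with the previous paragraph gives all three pairwise correspondences. Since a bijection between the first set and each of the other two, plus transitivity of bijections, suffices for a one-to-one correspondence between any two of the three, the structural skeleton is complete.

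The step requiring genuine care is the identification of degree-$r$ right components with \emph{trinomial} right components of the normalized form $x^r - h_0 x$, and the verification that \autoref{lem:components_vs_subspaces} applies to $\widetilde f$. The lemma as stated assumes $\widetilde f$ is squarefree of degree $r^n$; here $n = m$ and $\widetilde f$ is squarefree precisely when its linear coefficient $b$ is nonzero, so I must either assume $b \neq 0$ throughout or treat the degenerate case $b = 0$ separately by invoking \autoref{lem:squarefull}. I also need that every degree-$r$ additive right component is automatically of the shape $x^r - h_0 x$: a monic additive polynomial of degree $r$ has the form $x^r - h_0 x$ for some $h_0 \in \Fqbar$, and $h_0 \in \Fq$ follows because $h$ must lie in $\Aq$ for it to be a component in the sense of the first set. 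This last point is the main obstacle, since it is where the $\sigma_q$-invariance hypothesis silently enters and must be matched up correctly across the two lemmas.

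With those matchings in place the argument is essentially a diagram chase: \autoref{lem:projadd} gives (first set) $\leftrightarrow$ (second set), \autoref{lem:components_vs_subspaces} gives (first set) $\leftrightarrow$ (third set), and composing the two bijections produces (second set) $\leftrightarrow$ (third set) directly, sending a root $h_0$ of $\Psi_m^{(a,b)}$ to the one-dimensional $\sigma_q$-invariant subspace that is the root space of $x^r - h_0 x$. I expect the write-up to be short, the only real content being the careful bookkeeping of the squarefree hypothesis and the normalization of the right component, both of which the two lemmas were specifically designed to supply.
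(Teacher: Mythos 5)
Your proposal is correct and takes essentially the same route as the paper: the paper gives no separate argument for this proposition, stating only that \autoref{lem:projadd} and \autoref{lem:components_vs_subspaces} ``are the building blocks'' for it, and chaining those two lemmas is exactly what you do. Your additional bookkeeping (the additive form $bx$ rather than $b$, the squarefree hypothesis $b\neq 0$, and the observation that every monic additive right component of degree $r$ in $\Aq$ is $x^{r}-h_{0}x$ with $h_{0}\in\Fq$) only makes explicit what the paper leaves implicit.
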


More generally, assume that $f\in\Aq$ is any additive polynomial of
degree $r^n$.  We now list the possible numbers of
right components in $\Aq$. A \emph{rational Jordan form} has the shape
\begin{equation}
\label{eq:jordanform}
\begin{aligned}
& S=\diag(J_{\alpha_1}^{e_{11}},\ldots,J_{\alpha_1}^{e_{1k_1}},\ldots,
          J_{\alpha_\ell}^{e_{\ell1}},\ldots,J_{\alpha_\ell}^{e_{\ell
              k_\ell}}    )\in\Fr^{m\times m},\\
&\mbox{where}~ J_{\alpha_i}^{e_{ij}}=
\left(
\begin{smallmatrix}
C_{\alpha_i} & I_{s_i} &  \hbox to 0pt{\kern-5pt\vbox to 0pt{\kern-4pt\Large 0}}&  \\
             & \ddots & \ddots &   \\[-3pt]
             &  & \ddots & \raise4pt\hbox{\footnotesize $I_{s_i}$} \\
             &            &  & C_{\alpha_i}
\end{smallmatrix}
\right)
\in\Fr^{e_{ij}s_i\times e_{ij}s_i},
\end{aligned}
\end{equation}
and $\alpha_1,\ldots,\alpha_\ell\in\Frbar$ are the distinct
non-conjugate roots 
of the characteristic polynomial of $S$
(i.e., eigenvalues), $C_{\alpha_{i}}\in\F_r^{s_i\times s_i}$ is the companion
matrix of $\alpha_i$ (assuming $[\Fr[\alpha_i]:\Fr]=s_i$) and
$I_{s_i}$ is the $s_i\times s_i$ identity matrix.  

Following the proof of \autoref{lem:components_vs_subspaces}, let $V_f$ be the $\Fr$-vector space of roots,
and $S\in\Fr^{m\times m}$ the matrix representation of the Frobenius
operations $\sigma_{q}$ on $\Frbar$. 

\begin{proposition}[see, e.g. \cite{gie95}]
 Every matrix in $\Fr^{m\times m}$ is similar to one in rational Jordan form, and the number and multiplicity of eigenvectors
is preserved by this transformation.  
\end{proposition}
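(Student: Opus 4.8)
The plan is to deduce this from the structure theorem for finitely generated modules over a principal ideal domain, applied to the $\Fr[t]$-module that $S$ induces on column space. Endow $V = \Fr^{m\times 1}$ with the structure of a module over the polynomial ring $R = \Fr[t]$ by letting $t$ act as multiplication by $S$, so that $p(t)\cdot v = p(S)v$ for $p\in R$ and $v\in V$. Since $R$ is a PID and $V$ is finite-dimensional over $\Fr$, it is a finitely generated torsion $R$-module, and the primary decomposition of the structure theorem gives
\[
V \cong \bigoplus_{i,j} R/\bigl(p_i^{e_{ij}}\bigr),
\]
where the $p_i$ are the distinct monic irreducible factors over $\Fr$ of the characteristic polynomial of $S$. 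A root $\alpha_i\in\Frbar$ of $p_i$ is an eigenvalue of $S$ with $s_i = \deg p_i = [\Fr[\alpha_i]:\Fr]$.

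Next I would show that each primary cyclic summand $R/(p^e)$, with $p$ monic irreducible of degree $s$ and root $\alpha$, admits an $\Fr$-basis in which multiplication by $t$ is represented by the hypercompanion block $J_\alpha^e$ of \eqref{eq:jordanform}. The operator on this summand has minimal polynomial $p^e$; over $\Frbar$ its eigenvalues are the $s$ Galois conjugates $\alpha,\alpha^r,\ldots,\alpha^{r^{s-1}}$, each carried by a single Jordan block of size $e$, and the block $J_\alpha^e$ — with the companion matrix $C_\alpha$ of $p$ repeated down the diagonal and an identity $I_s$ on the superdiagonal — is exactly the $\Fr$-rational packaging of these conjugate blocks. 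Choosing a cyclic generator and a compatible basis of length $es$ realizes this form explicitly. Assembling the blocks over all summands, and grouping those attached to the same eigenvalue, yields a matrix $S' = P^{-1}SP$ in the rational Jordan form of \eqref{eq:jordanform}, where $P$ is the change-of-basis matrix.

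For the second assertion, everything spectral is computed from kernels and is therefore invariant under similarity. The characteristic polynomials of $S$ and $S' = P^{-1}SP$ coincide, so the eigenvalues and their algebraic multiplicities are unchanged. For each eigenvalue $\alpha$ one has $S' - \alpha I = P^{-1}(S-\alpha I)P$, whence $\ker(S'-\alpha I) = P^{-1}\ker(S-\alpha I)$, and the two eigenspaces have equal dimension. Thus the number of linearly independent eigenvectors attached to each eigenvalue, that is, its geometric multiplicity, is preserved by the transformation, as claimed.

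The main obstacle is the block-level construction in the middle step: producing, for a primary cyclic module $R/(p^e)$ with $p$ of degree $s>1$, an explicit basis that reproduces precisely the prescribed layout of $C_\alpha$ on the diagonal and $I_s$ on the superdiagonal rather than some other companion-type normal form. This is the only part that needs genuine computation; the outer primary decomposition and the preservation of eigenvector counts are formal consequences of the PID structure theorem and of similarity being a change of basis, respectively. Since the result is classical, in the paper one may simply cite \cite{gie95}.
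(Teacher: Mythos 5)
Your proof is correct, and it supplies an argument where the paper gives none: the paper states this proposition with only a citation to \cite{gie95}, so there is no in-paper proof to compare against. Your route --- primary decomposition of $\Fr^{m\times 1}$ as a torsion $\Fr[t]$-module into cyclic summands $R/(p_i^{e_{ij}})$, one rational Jordan block per summand, and preservation of eigenspaces because $\ker(S'-\alpha I)=P^{-1}\ker(S-\alpha I)$ --- is the standard module-theoretic proof that the citation stands in for, and both halves of your argument are sound. The one step you rightly flag as needing work, namely exhibiting a basis of $R/(p^e)$ in which $t$ acts by the hypercompanion block $J_\alpha^e$ of \eqref{eq:jordanform} with $C_{\alpha}$ on the diagonal and $I_{s}$ on the superdiagonal, can be closed without any explicit basis computation: check that $J_\alpha^e$ has minimal polynomial $p^e$ equal to its characteristic polynomial (it is nonderogatory), so the $\Fr[t]$-module it defines on $\Fr^{es\times 1}$ is cyclic and isomorphic to $R/(p^e)$; since two matrices are similar exactly when their $\Fr[t]$-modules are isomorphic, the summand is realized by $J_\alpha^e$. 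With that observation your argument is complete, and citing \cite{gie95} as you suggest matches what the paper actually does.
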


Thus, we may assume $S$ to be of the form
described in (\ref{eq:jordanform}).  Since we are only
interested here in $\sigma_{q}$-invariant subspaces of dimension 1, we
ignore for now all $\alpha_i$ which are not in $\Fr$.  The number of $A$-invariant lines --- one dimensional subspaces
invariant under $A$ --- is described as follows.

\begin{theorem}
  If $A\in\Fr^\nxn$ has rational Jordan normal form as in
  (\ref{eq:jordanform}), then the number of $A$-invariant lines in
  $\Fr^{n \times 1}$ is
 \[
 \sum_{\stackrel{1\leq i\leq k}{\alpha_i\in\Fr}}\prod_{1\leq j\leq
   k_i} \frac{r^{k_{ij}}-1}{r-1}.
 \]
\end{theorem}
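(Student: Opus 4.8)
The plan is to reduce $A$-invariant lines to rational eigenlines and then count these eigenspace by eigenspace. First I would observe that a one-dimensional subspace $L=\Fr v$ (with $v\neq 0$) is $A$-invariant exactly when $Av\in L$, i.e. $Av=\lambda v$ for some scalar $\lambda$; since both $v$ and $Av$ lie in $\Fr^{n\times 1}$ and $v\neq 0$, the scalar $\lambda$ is forced into $\Fr$. Thus the $A$-invariant lines are precisely the eigenlines of $A$ whose eigenvalue is rational (lies in $\Fr$). Because every such line lies in the eigenspace $\ker(A-\lambda I)$ and pins down $\lambda$ uniquely, the lines attached to different eigenvalues are disjoint, and the total count splits as a sum over the rational eigenvalues of the number of lines contained in the respective eigenspace.

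Next I would compute each geometric multiplicity from the block structure of (\ref{eq:jordanform}). Since $A$ is block diagonal, $\ker(A-\lambda I)$ is the direct sum, over all blocks $J_{\alpha_i}^{e_{ij}}$, of the kernels of $J_{\alpha_i}^{e_{ij}}-\lambda I$. For $\alpha_i\notin\Fr$ the companion matrix $C_{\alpha_i}$ has characteristic polynomial equal to the minimal polynomial of $\alpha_i$ over $\Fr$, of degree $s_i>1$ and hence with no root in $\Fr$; so $J_{\alpha_i}^{e_{ij}}-\lambda I$ is invertible for every $\lambda\in\Fr$ and such blocks contribute nothing. For $\alpha_i\in\Fr$ we have $s_i=1$, so $J_{\alpha_i}^{e_{ij}}$ is an ordinary $e_{ij}\times e_{ij}$ Jordan block; its kernel at $\lambda=\alpha_i$ is one-dimensional (spanned by the first basis vector) and trivial for $\lambda\neq\alpha_i$. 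Summing over the $k_i$ blocks attached to $\alpha_i$ shows that $\dim\ker(A-\alpha_i I)=k_i$ for each rational $\alpha_i$, and that no $\lambda\in\Fr$ other than the rational $\alpha_i$ is an eigenvalue at all.

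Finally I would assemble the count. The number of $\Fr$-lines in a space of dimension $d$ is $(r^{d}-1)/(r-1)$, so summing over the rational eigenvalues $\alpha_i\in\Fr$ gives the asserted total, each summand being the number of lines in the corresponding eigenspace. The one genuinely delicate step is the geometric-multiplicity bookkeeping: one must verify both that each rational Jordan block contributes exactly one eigenvector and that every non-rational block contributes none. Both facts follow cleanly from the block-diagonal decomposition of $\ker(A-\lambda I)$ together with the rationality of the companion-block spectra, so I expect no real obstacle beyond being careful about which $\alpha_i$ lie in $\Fr$ and matching the index ranges in the displayed sum.
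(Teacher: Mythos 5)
Your proof is correct and follows essentially the same route as the paper's: identify invariant lines with rational eigenlines, note that each rational Jordan block contributes a one-dimensional eigenspace so that $\dim\ker(A-\alpha_i I)=k_i$, and sum $(r^{k_i}-1)/(r-1)$ over the eigenvalues $\alpha_i\in\Fr$ — your version just spells out the steps (non-rational blocks contributing nothing, disjointness across eigenvalues) that the paper states without justification. Note that both your argument and the paper's proof yield a single factor $(r^{k_i}-1)/(r-1)$ per rational eigenvalue rather than the product $\prod_{j}(r^{k_{ij}}-1)/(r-1)$ displayed in the theorem statement, which appears to be a typo in the paper.
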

\begin{proof}
  For each eigenvalue $\alpha_i\in\Fr$ ($1\leq i\leq \ell$) of $A$,
  the rational Jordan block $J_{\alpha_i}^{e_{ij}}$ has an eigenspace
  of dimension one. The entire eigenspace of $A$ associated with
  $\alpha_i$ has dimension $k_i$, and hence contains
  $(r^{k_i}-1)/(r-1)$ lines.  Since no line is associated with two
  distinct eigenvalues, we simply add the number of lines associated with each
  eigenvalue in $\Fr$.
\end{proof}

For example, in $\Fr^{3\times 3}$ we can
list all matrix classes and the number of $1$-dimensional
invariant subspaces as follows:
\[
\begin{array}{cccc}
\left(
\begin{smallmatrix}
\hbox{$\alpha_1$} &  & \\
& \hbox{$\alpha_1$} & \\
&  & \hbox{$\alpha_1$}
\end{smallmatrix}
\right)
,& 
\left(
\begin{smallmatrix}
\hbox{$\alpha_1$} & 1 & \\
& \hbox{$\alpha_1$} & \\
&  & \hbox{$\alpha_1$}
\end{smallmatrix}
\right)
,&
\left(
\begin{smallmatrix}
\hbox{$\alpha_1$} & 1 & \\
& \hbox{$\alpha_1$} & 1\\
&  & \hbox{$\alpha_1$}
\end{smallmatrix}
\right)
\\[3pt]
r^2+r+1 & r+1 &  1 \\[6pt]
\left(
\begin{smallmatrix}
\hbox{$\alpha_1$} & 1 & \\
& \hbox{$\alpha_1$} & \\
&  & \hbox{$\alpha_2$}
\end{smallmatrix}
\right)
,&
\left(
\begin{smallmatrix}
\hbox{$\alpha_1$} &  & \\
& \hbox{$\alpha_2$} & \\
&  & \hbox{$\alpha_3$}
\end{smallmatrix}
\right)
,&
\left(
\begin{smallmatrix}
\framebox{\hbox to 10pt{\vbox to 6pt{}}}  & \\
& \hbox{$\alpha_1$}
\end{smallmatrix}
\right)
,&
\left(
\vbox to 13pt{}\framebox{\lower1pt\hbox to 20pt{\vbox to 9pt{}}}
\right),\\
2 & 3 & 1 & 0
\end{array}
\]
where the number of $1$-dimensional invariant subspaces is listed
beneath each matrix.  Empty boxes indicate companion blocks associated
with eigenvalues not in $\Fr$.

For a positive integer $m$, let $\Pi_{m}$ be the set of partitions $\pi =
(s_{1}, \dots, s_{k})$ with positive integers $s_{i}$ and $s_{1} + \dots +
s_{k} = m$,
$\varphi_{r,m}=(r^m-1)/(r-1)$, for any $\pi \in \Pi_{m}$, let
$\varphi_{r}(\pi)=\varphi_{r, s_{1}} + \varphi_{r, s_{2}} + \dots +
\varphi_{r, s_{k}}$, and $\varphi_{r}(\Pi_{m}) = \{
\varphi_{r}(\pi) \colon \pi \in \Pi_{m}\}$.

\begin{theorem}
\label{thm:S}
We consider the set
  \begin{equation*}
    S_{q,r,m} = \{ i \in \NN \colon \exists f \in \Aq, \deg f = r^{m}, 
   f~\text{is a maximal $i$-collision} \}.
  \end{equation*}
of maximal collision sizes for additive polynomials.  Then
\begin{align*}
  S_{0} & = \{0\}, \\
  S_{m} & = S_{m-1} \cup \varphi_{r}(\Pi_{m}).
\end{align*}
 \end{theorem}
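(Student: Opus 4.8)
The plan is to read each maximal collision off the Frobenius matrix $S=\sigma_{q}|_{V_{f}}$ and to count the relevant decompositions as $\sigma_{q}$-invariant lines. By \autoref{lem:components_vs_subspaces} the degree-$r$ right components of $f$ correspond bijectively to the $\sigma_{q}$-invariant lines of $V_{f}\cong\Fr^{m\times 1}$, and these form the complete family of decompositions with common $\deg g=r^{m-1}$, i.e.\ a maximal collision whose size is the number $N(S)$ of such lines (generalizing the case $m=2$, where $\deg h=r$ is forced); the non-squarefree instances are absorbed by the reduction $f=x^{r^{t}}\circ h$ of \autoref{lem:components_vs_subspaces} together with \autoref{lem:squarefull}. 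Bringing $S$ into the rational Jordan form of \eqref{eq:jordanform} and invoking the theorem above counting $A$-invariant lines, whose proof shows that each $\Fr$-eigenvalue $\alpha_{i}$ contributes exactly $(r^{d_{i}}-1)/(r-1)$ lines with $d_{i}=\dim\ker(S-\alpha_{i})$, I obtain
\[
N(S)=\sum_{\alpha_{i}\in\Fr}\frac{r^{d_{i}}-1}{r-1}=\sum_{\alpha_{i}\in\Fr}\varphi_{r,d_{i}}=\varphi_{r}(\pi),
\]
where $\pi$ is the partition of $j=\sum_{\alpha_{i}\in\Fr}d_{i}\le m$ assembled from the eigenspace dimensions. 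The base case $m=0$ gives $V_{f}=0$ and $S_{0}=\{0\}$.

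For the inclusion $S_{m}\subseteq S_{m-1}\cup\varphi_{r}(\Pi_{m})$ I would induct on $m$. Fix $f$ of degree $r^{m}$ with $N(S)=\varphi_{r}(\pi)$, $\pi\vdash j$. If $j=m$, then $S$ is semisimple with all eigenvalues in $\Fr$, $\pi$ is a partition of $m$, and $\varphi_{r}(\pi)\in\varphi_{r}(\Pi_{m})$. If $j<m$, then $S$ carries a block that contributes no line—either a Jordan block of size $\ge 2$ at an $\Fr$-eigenvalue or a companion block of an eigenvalue outside $\Fr$—and removing one row and column from such a block produces $S'\in\Fr^{(m-1)\times(m-1)}$ with unchanged $\Fr$-eigenspace dimensions, hence $N(S')=N(S)$, so $N(S)\in S_{m-1}$ by induction. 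The one delicate point is keeping $S'$ a genuine Frobenius matrix: when the block is a size-$2$ companion block of an irreducible polynomial (so no $1\times1$ shrink is available) I would instead lengthen an existing $\Fr$-Jordan block, which also preserves $N$, and dispose of the residual small cases (and the value $0$ inherited from the base) by hand.

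For the reverse inclusion every prescribed value must be realized. Given $\pi=(s_{1},\dots,s_{k})\vdash m$ I would construct a squarefree $f\in\Aq$ with $V_{f}=\bigoplus_{1\le i\le k}W_{i}$, where $W_{i}$ is an $s_{i}$-dimensional $\Fr$-subspace of $\ker(x^{q}-\alpha_{i}x)$ for distinct $\alpha_{1},\dots,\alpha_{k}\in\Fr^{\times}$; then $\sigma_{q}$ acts on $W_{i}$ as the scalar $\alpha_{i}$, so $S=\diag(\alpha_{1}I_{s_{1}},\dots,\alpha_{k}I_{s_{k}})$ and $N(S)=\varphi_{r}(\pi)$, giving $\varphi_{r}(\Pi_{m})\subseteq S_{m}$. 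The inclusion $S_{m-1}\subseteq S_{m}$ then follows by lengthening one $\Fr$-Jordan block of a dimension-$(m-1)$ witness (or, for the value $0$, inserting a companion block of an eigenvalue outside $\Fr$), raising the dimension to $m$ without changing $N$. Combining the two inclusions with $S_{0}=\{0\}$ yields the recursion, and \autoref{pro:equivalent} transports the statement to roots of $\Psi_{m}^{(a,b)}$.

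The hard part is exactly this realizability: not every $S\in\mathrm{GL}_{m}(\Fr)$ arises as $\sigma_{q}|_{V_{f}}$. Since the $\alpha$-eigenspace of $\sigma_{q}$ on $\Frbar$ is $\ker(x^{q}-\alpha x)$, of dimension $d=[\Fq:\Fr]$, each part satisfies $s_{i}\le d$, and the $k$ distinct eigenvalues lie in $\Fr^{\times}$, forcing $k\le r-1$; so a partition $\pi\vdash m$ is attainable only for $q$ large enough (say $d\ge m$ and $r>m$), and the clean recursion is to be read with this proviso, while the upper bound above holds unconditionally. I expect the technical heart to be showing that these two numerical constraints are the only obstructions—that every $S$ meeting them genuinely comes from an additive polynomial over $\Fq$, via the module structure of $V_{f}$ over $\Fr[\sigma_{q}]$ as in \cite{gie88}—whereas the partition bookkeeping is routine.
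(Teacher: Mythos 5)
The paper never actually writes out a proof of \autoref{thm:S}: the statement is given bare, immediately after the lemma identifying degree-$r$ right components with $\sigma_{q}$-invariant lines and the theorem counting invariant lines of a matrix in rational Jordan form. Your route --- pass to the Frobenius matrix $S$ on $V_{f}$ via \autoref{lem:components_vs_subspaces}, put it in rational Jordan form, and read off $N(S)=\sum_{\alpha\in\Fr}\varphi_{r,k_{\alpha}}$ with $k_{\alpha}$ the eigenspace dimensions --- is exactly the machinery the paper assembles for this purpose, so in approach you coincide with the intended argument. (You also read the displayed line-count formula the way its proof, rather than its typography, dictates: a sum of $(r^{k_{i}}-1)/(r-1)$ over the $\Fr$-eigenvalues, not a product over blocks.)

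The most valuable part of your write-up is the caveat at the end, and you should not soften it: the theorem as printed is not literally true for all $q$. The $u$-primary part of $\Frbar$ as an $\Fr[\sigma_{q}]$-module is a direct sum of $d=[\Fq:\Fr]$ copies of the injective hull of $\Fr[y]/(u)$, so a rational Jordan structure is realized by some squarefree $f\in\Aq$ exactly when each irreducible factor of the minimal polynomial carries at most $d$ blocks; together with the fact that the $\Fr$-eigenvalues lie in $\Fr^{\times}$ this gives precisely your constraints $s_{i}\le d$ and $k\le r-1$. Concretely, for $q=r=2$ and $m=2$ the four additive polynomials $x^{4}$, $x^{4}+x^{2}$, $x^{4}+x$, $x^{4}+x^{2}+x$ have $1,2,1,0$ decompositions respectively, so $S_{2,2,2}=\{0,1,2\}$ and $r+1\notin S_{2,2,2}$ --- consistent with the paper's own count $c_{r+1}=(q-1)(q-r)/(r(r^{2}-1))=0$ at $q=r$, but not with $S_{2}=\{0,1,2,r+1\}$. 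So the recursion requires the proviso you state, or $S_{q,r,m}$ must be read with $q$ ranging over all powers of $r$. This is a correct catch, not a defect of your argument.

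Three gaps in the sketch remain, all fillable. First, your worry that shrinking $S$ to $S'$ might break realizability points in the wrong direction: deleting rows and columns only decreases the number and sizes of blocks, so realizability is inherited; the genuine nuisance is the one you name, that an irreducible companion block of degree at least $2$ cannot lose exactly one dimension, and the clean fix is to shrink a different block when one exists and to handle the case where $S$ is a single such block (then $N(S)=0$, which lies in every $S_{j}$) separately --- ``lengthening'' a block increases the dimension and does not help here. Second, the identification of ``maximal $i$-collision'' with ``number of degree-$r$ right components'' is an interpretive choice you should make explicit: for $m\ge 3$ there are also maximal equal-degree collisions with $\deg h=r^{j}$, $j\ge 2$, whose sizes are Gaussian-binomial counts of invariant $j$-spaces and do not in general lie in $S_{m-1}\cup\varphi_{r}(\Pi_{m})$; only the reading of the subsequent corollary (dimension-one invariant subspaces) makes the theorem tenable. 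Third, \autoref{lem:squarefull} disposes of the non-squarefree case only for $m=2$; for larger $m$ you still owe an argument reducing $f=g\circ x^{r^{t}}$ to the squarefree part and tracking its degree-$r$ right components.
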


As examples, we have 
\begin{align*}
  S_{0} & = \{ 0 \}, \\
  S_{1} & =  S_{0} \cup \{ \varphi_{r}(1) \} = \{0, 1\}, \\
  S_2 & = S_{1} \cup \{ \varphi_{r}(1,1), \varphi_{r}(2) \} = \{0, 1, 2, r+1  \},~\mbox{(consistent with \cite{blu04a})}\\
  S_3 & =  S_2 \cup \{\varphi_{r}(3), \varphi_r(2)+1, 3\}, \\
  S_4 & = S_3 \cup \{\varphi_r(4), \varphi_r(3)+1, 2\varphi_r(2), \varphi_r(2)+2, 4\},\\
  S_5 & = S_4 \cup \{\varphi_r(5), \varphi_r(4)+1,
  \varphi_r(3)+\varphi_r(2),\varphi_r(3)+2, 2\varphi_r(2)+1, \\
  & \quad \quad \quad \varphi_r(2)+3, 5 \} ,\\
  S_6 & = S_5 \cup \{ \varphi_r(6), \varphi_r(5)+1, \varphi_r(4)+\varphi_r(2),
  \varphi_r(4)+2, 2\varphi_r(3),\\
  & \quad \quad \quad  \varphi_r(3)+\varphi_r(2)+1, \varphi_r(3)+3, 3\varphi_r(2), 2\varphi_r(2)+2, \varphi_r(2)+5, 6 \}. \\
\end{align*}
The size of $S_m$ equals $\sum_{0\leq k\leq m} p(k)$, where $p(k)$ is the
number of additive partitions of $k$. This grows exponentially in $m$
\citep{harram18} but is still surprisingly small considering the generality of
the polynomials involved.

\begin{corollary} Let $r$ be a power of $p$, $m \geq 0$, $a,b \in \Fq$ and $f=  x^{r^{m}}+ax^{r}+bx$.
  \begin{enumerate}
  \item The possible number of roots of
    $\Psi_{n}^{(a,b)}$ is $S_{m}$.
  \item The possible number of $\sigma_{q}$-invariant linear
  subspaces of $V_{f}$ of dimension $1$ is $S_{m}$.
  \end{enumerate}
\end{corollary}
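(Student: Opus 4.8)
The two parts are in fact a single statement. By \autoref{pro:equivalent}, for every $a,b\in\Fq$ (and $m\geq 2$) the roots of $\Psi_m^{(a,b)}$, the one-dimensional $\sigma_q$-invariant subspaces of $V_f$ for $f=x^{r^m}+ax^r+bx$, and the degree-$r$ right components of $f$ are in bijection, so the three counts agree. Parts (1) and (2) therefore claim the same equality of sets, and the plan is to show that the number of degree-$r$ right components of $f$, as $(a,b)$ runs over $\Fq^2$, runs over exactly $S_m$. The cases $m\in\{0,1\}$ I would settle by inspection (against $S_0=\{0\}$, $S_1=\{0,1\}$), and $b=0$ by \autoref{lem:squarefull} together with the factorisation $f=(x^{r^{m-1}}+ax)\circ x^r$; this leaves the squarefree regime.

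One inclusion is easy. Every $f=x^{r^m}+ax^r+bx$ is an additive polynomial of degree $r^m$, and the set of all its decompositions $(g,h)$ with $\deg g=r^{m-1}$ is a maximal collision whose size is the number of degree-$r$ right components. By \autoref{thm:S} this size lies in $S_m$, so the set of achieved counts is contained in $S_m$.

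For the reverse inclusion I would first record the precise dictionary. If $\mu\in\Fq$ is a root of $\Psi_m^{(a,b)}$ and $\gamma^{r-1}=\mu$, then $\gamma$ spans a $\sigma_q$-invariant line in $V_f$ on which $\sigma_q$ acts by the scalar $\gamma^q/\gamma=\mu^{1+r+\cdots+r^{d-1}}=\Nq(\mu)\in\Fr$. Grouping the rational roots of $\Psi_m^{(a,b)}$ by their norm thus groups the invariant lines by their $\sigma_q$-eigenvalue: an eigenspace of dimension $s$ contains $\varphi_{r,s}$ lines, hence carries exactly $\varphi_{r,s}$ roots of that norm. Summing over the distinct $\Fr$-rational eigenvalues gives the total count $\sum_j\varphi_{r,s_j}=\varphi_r(\pi)$, where $\pi=(s_1,\dots,s_k)$ lists the dimensions of the $\Fr$-rational eigenspaces of $\sigma_q$ on $V_f$. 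To realise a prescribed value $\varphi_r(\pi)$ with $\pi\in\Pi_{m'}$, $m'\leq m$ (together with $0$, and the values inherited from smaller degree via $b=0$ by induction on $m$), I would therefore choose $(a,b)$, over a sufficiently large $q=r^d$, so that $\sigma_q$ on $V_f$ has $\Fr$-rational eigenspaces of exactly the dimensions $s_1,\dots,s_k$ and no further $\Fr$-eigenvalue, matching one of the classes enumerated by the count of $A$-invariant lines.

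This last realisation is the main obstacle. Since $\Psi_m^{(a,b)}$ carries only two coefficients, its roots --- equivalently the $\Fr$-spectrum of $\sigma_q$ on $V_f$ --- cannot be prescribed freely, and one must verify that the rigid projective family still attains every admissible norm-distribution of rational roots counted by $S_m$. The delicate points are: producing repeated $\Fr$-eigenvalues of large geometric multiplicity (the scalar case, giving $\varphi_{r,m}$, i.e. the maximal $r+1$ when $m=2$) despite $\sigma_q$ arising from a trinomial; managing the dependence on whether $r$ and $d$ are large enough to host $k$ distinct nonzero norms, so that for small $r$ some counts (such as $2$ when $r=2$) must instead be produced from the non-squarefree branch $b=0$; and finally solving back for $(a,b)$. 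Establishing this is exactly the promised generalisation of Bluher's first result to arbitrary $m$, and is where I expect the real difficulty. I note that if the realisation direction of \autoref{thm:S} is itself carried out through trinomials of this shape, then the reverse inclusion is already available and the corollary is immediate.
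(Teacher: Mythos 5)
Your framework is exactly the paper's: the corollary is meant to follow from \autoref{pro:equivalent} (which identifies the three counts --- roots of $\Psi_m^{(a,b)}$, degree-$r$ right components of the trinomial, and $\sigma_q$-invariant lines of $V_f$) together with \autoref{thm:S}. Your forward inclusion (every achieved count lies in $S_m$) is correct and is the half that genuinely follows from the statement of \autoref{thm:S}; your norm computation $\sigma_q(\gamma)/\gamma = N_{\FF_q/\FF_r}(h_0)$ identifying the eigenvalue attached to a rational root is a correct and useful refinement that the paper does not spell out.

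The gap is the one you name yourself: you do not prove the reverse inclusion, i.e.\ that every value in $S_m$ actually occurs as the root count of some $\Psi_m^{(a,b)}$. You are right that this does not follow formally from \autoref{thm:S}, whose existential quantifier ranges over \emph{all} additive polynomials of degree $r^m$, not only the trinomials $x^{r^m}+ax^r+bx$; realizing a prescribed rational eigenspace profile within the two-parameter trinomial family is a nontrivial constraint (for $m=2$ the paper only settles it via the explicit counts $c_0,c_1,c_2,c_{r+1}$ of \autoref{thm:colcounts}, and even there $c_{r+1}=0$ when $q=r$, so the achievable set genuinely depends on $q$, which the recursion $S_m=S_{m-1}\cup\varphi_r(\Pi_m)$ suppresses). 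The paper states the corollary without proof, implicitly absorbing this realizability into \autoref{thm:S}; your closing observation --- that the corollary is immediate if the realization half of \autoref{thm:S} is itself carried out through trinomials --- is precisely the intended reading, but as submitted your argument establishes only one of the two containments, so the proof is incomplete rather than wrong.
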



We investigate the general result of
\autoref{thm:S} in the case $m=2$ further.  This leads to an exact
determination, for each $i$, of how often $i$-collisions occur; see
\autoref{cor:bluher}. Assume that $f\in\Aq$ is squarefree, with root space $V_f$.  Again let
$\sigma_{q}$ be the Frobenius automorphism fixing $\Fq$, and $S\in\Fr^{2\times 2}$
its representation with respect to some fixed basis.  The
number of one-dimensional subspaces of $V_f$ invariant under $\sigma_{q}$
is equal to the number of nonzero vectors $w\in\Fr^{2\times 1}$ such that $Sw=\lambda w$ for
some $\lambda\in\Fr$, that is, the number of eigenvalues of $S$.  Each
such $w$ generates a one-dimensional $\sigma_{q}$-invariant subspace, and
each such subspace is generated by $r-1$ such $w$.  Thus, the number
of distinct $\sigma_{q}$-invariant subspaces of dimension one, and hence
the number of right components in $\Aq$ of degree $r$, is
equal to the number of eigenvectors of $S$ in $\Fr^2$, divided by
$r-1$.

We now classify $\sigma_{q}$ according to the possible matrix similarity
classes of $S$, as captured by its rational canonical form, and count
the number of eigenvectors and components in each case.  Note
that the number of eigenvectors of $S$ equals the number of
eigenvectors of $T$ when $S$ is a similar matrix to $T$ ($S\sim T$).
\begin{theorem}
\label{thm:countdecomp}
Let $f\in\Aq$ be squarefree of degree $r^2$.  Suppose the Frobenius
automorphism $\sigma_{q}$ is represented by $S\in\Fr^{2\times 2}$, and
$\Lambda\in\Fr[z]$ is the minimal polynomial of the matrix $S$.
Then one of the following holds:
\begin{description}
\item[Case 0:] $\displaystyle S\sim \begin{pmatrix}
0 & \delta\\
1 & \gamma
\end{pmatrix}$, and $\Lambda=z^2-\gamma z-\delta\in\Fr[z]$ is
irreducible, and $f$ is indecomposable.
\item[Case 1:] $\displaystyle S\sim \begin{pmatrix}
\gamma & 1\\
0 & \gamma
\end{pmatrix}\in\Fr^{\mskip1mu 2\times 2}$ with $\gamma\neq
0$, and $\Lambda=(z-\gamma)^2$, and $f$ has a unique right
component of degree~$r$.
\item[Case 2:] $\displaystyle S\sim \begin{pmatrix}
\gamma & 0\\
0 & \delta
\end{pmatrix}\in\Fr^{\mskip1mu 2\times 2}$
for $\gamma\neq\delta$ with $\gamma\delta\neq 0$, when
$\Lambda=(z-\gamma)(z-\delta)$, and $f$ has a 2-collision.
\item[Case $\mathbold{r\mskip2mu +}$1:]
$\displaystyle S = \begin{pmatrix}
\gamma & 0 \\
0 & \gamma
\end{pmatrix}\in\Fr^{\mskip1mu 2\times 2}$, for $\gamma\neq
0$, and $f$ has an $(r+1)$-collision.
\end{description}
\end{theorem}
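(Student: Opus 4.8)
The plan is to read this off the reduction already in place just before the statement. The size of the collision at $f$ is the number of nontrivial (degree $r$) right components, and by \autoref{lem:components_vs_subspaces} together with the discussion above, this equals the number of $\sigma_{q}$-invariant lines in $V_{f}$, i.e.\ the number of eigenvectors of $S$ in $\Fr^{2\times 1}$ divided by $r-1$. Since the number of eigenvectors is a similarity invariant, it suffices to run through the similarity classes of $S$ and tally eigenlines in each. Note also that given a right component $h$ the cofactor $g$ is the unique right quotient of $f$ by $h$ in the Euclidean ring $\Aq$, so distinct right components give distinct pairs $(g,h)$ and the collision size is exactly the number of degree-$r$ right components.

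First I would record that $S$ is invertible. Because $f$ is squarefree, $V_{f}$ is a $2$-dimensional $\Fr$-space, and $\sigma_{q}$ restricts to it as an $\Fr$-linear bijection (the Frobenius is injective, hence bijective on the finite set $V_{f}$). Thus $\det S\neq 0$ and $0$ is never an eigenvalue, which is precisely what forces $\gamma\neq 0$ in Cases~1 and $r+1$ and $\gamma\delta\neq 0$ in Cases~0 and~2.

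Next I would enumerate the rational canonical forms of a matrix in $\Fr^{2\times 2}$ by its minimal polynomial $\Lambda$, of degree $1$ or $2$. If $\deg\Lambda=2$ and $\Lambda$ is irreducible over $\Fr$, then $S$ is similar to the companion matrix $\left(\begin{smallmatrix} 0 & \delta \\ 1 & \gamma \end{smallmatrix}\right)$ with $\Lambda=z^{2}-\gamma z-\delta$ and no eigenvalue in $\Fr$ (Case~0). If $\deg\Lambda=2$ and $\Lambda$ splits, then either $\Lambda=(z-\gamma)^{2}$ with $S$ a single Jordan block (Case~1) or $\Lambda=(z-\gamma)(z-\delta)$ with $\gamma\neq\delta$ and $S$ diagonalizable (Case~2). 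If $\deg\Lambda=1$, then $S=\gamma I$ is scalar (Case~$r+1$). These classes are exhaustive and pairwise disjoint, and invertibility excludes $0$ from the eigenvalues.

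Finally I would count eigenvectors and divide by $r-1$. In Case~0 there are no $\Fr$-eigenvectors, hence no degree-$r$ right component; since every component of an additive polynomial is additive and the only admissible proper degree of a factor of $r^{2}$ is $r$, the polynomial $f$ is indecomposable. In Case~1 the eigenspace of $\gamma$ is one-dimensional, giving $r-1$ eigenvectors and a single invariant line, hence a unique right component. In Case~2 the two eigenspaces are each one-dimensional, giving $2(r-1)$ eigenvectors and two invariant lines, hence a $2$-collision. In Case~$r+1$ all of $\Fr^{2\times 1}$ is the eigenspace, giving $r^{2}-1$ nonzero eigenvectors and $(r^{2}-1)/(r-1)=r+1$ invariant lines, hence an $(r+1)$-collision; since these exhaust all degree-$r$ components, each collision is maximal of the stated size. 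The computations are routine linear algebra, and I do not anticipate a serious obstacle: the only points requiring care are the invertibility of $S$ (which supplies the nonvanishing eigenvalue conditions) and the passage from invariant lines to collision size, both resting on the earlier lemmas, so the theorem is in essence a bookkeeping of the four similarity types.
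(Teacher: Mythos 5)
Your proposal is correct and follows essentially the same route as the paper: reduce collision size to the count of $\sigma_{q}$-invariant lines (eigenvectors of $S$ divided by $r-1$), enumerate the similarity classes of $S$ via its minimal polynomial, and tally eigenlines in each case. The extra details you supply (invertibility of $S$, exhaustiveness of the case split, distinctness of the pairs $(g,h)$) are correct refinements of points the paper leaves implicit, not a different argument.
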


\begin{proof}\quad
  \begin{description}
\item[Case 0:] $S$ represents multiplication by $z$ in the finite field
$\E=\Fr[z]/(\Lambda)$.   However, there is no $a\in\E^\times$ such that
$za=\lambda a$ for $\lambda\in\Fr^\times$, so there are no
eigenvectors, and hence no right components of degree $r$.
\item[Case 1:]
Nonzero vectors of the form $(\alpha,0)\in\Fr^2$ are
eigenvectors, and there are $r-1$ of these.  Thus $f$ has
$(r-1)/(r-1)=1$ right components in $\Aq$ of degree
$r$.
\item[Case 2:]
Nonzero vectors of the form $(\alpha,0)\in\Fr^2$ and
$(0,\beta)\in\Fr^2$ are eigenvectors, and there are $2(r-1)$
of these.  Thus $f$ has $2(r-1)/(r-1)=2$ right composition
components in $\Aq$ of degree $r$.
\item[Case $\mathbold{r\mskip2mu +}$1:]
Every nonzero element of $\Fr^2$ is an eigenvector, and hence
there are $r^2-1$ of them, so $f$ has $(r^2-1)/(r-1)=r+1$ right
components in $\Aq$ of degree $r$.
\end{description}
\end{proof}

\section{Algorithms for additive polynomials}

\label{sec:algos}


Given $f\in\Aq$ of degree $r^2$, using the techniques of
\autoref{sec:proj-polyn}, combined with basic algorithms from
\cite{Gie98}, we can quickly determine the number of collisions for
$f$. 

The centre of $\Aq$ will be a useful tool in understanding its
structure, and is easily shown to be equal to
\[
\cAq=\left\{\sum_{0\leq i\leq\kappa} a_ix^{q^{i}} \colon \kappa \in \NN, 
     a_0,\ldots,a_\kappa\in\Fr \right\}
\subseteq\Aq
\]
(see, e.g., \cite{Gie98}).  This is isomorphic to
the ring $\Fr[y]$ of polynomials under the usual addition and multiplication,
via the isomorphism
\[
f = \sum_{0\leq i\leq\kappa} a_ix^{q^{i}}
~\mapsto~
\tau(f) = \sum_{0\leq i\leq \kappa} a_iy^i
\]
(see \cite{LidNie83}, Section 3.4).  $\Fr[y]$ has the important property of
being a commutative unique factorization domain.  Every element $f\in\Aq$ has
a unique \emph{minimal central left composition (mclc)} \todo{Mark: Is the
  mclm equal to the norm?\\ MWG: Don't really know!}  $\fhat\in\cAq$, the
nonzero monic polynomial in $\cAq$ of minimal degree such that $\fhat=g \circ
f$ for some $g\in\Aq$.
Given $\nu\in\Frbar$, we say that $\nu$
\emph{belongs to} $f\in\Aq$ if $f$ is the nonzero polynomial in $\Aq$
of lowest degree of which $\nu$ is a root.

\begin{fact}[\citealp{Gie98}]
Let $p$ be a prime, $r$ a power of $p$ and $q=r^d$.
For $f\in\Aq$ of degree $r^n$, we can find the minimal central
left composition $\fhat\in\cAq$ with $O(n^3m^3)$ operations in $\Fr$.
\end{fact}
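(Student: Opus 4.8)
The plan is to pass to the skew-polynomial model of $\Aq$ and reduce the computation of $\fhat$ to a single minimal-polynomial-of-a-vector computation over $\Fr$. First I would invoke the standard isomorphism $\Aq \cong R := \Fq[X;\sigma]$ sending $\sum_i a_i x^{r^i}$ to $\sum_i a_i X^i$, where $\sigma\colon a \mapsto a^r$ is the $r$-power Frobenius; composition of additive polynomials becomes multiplication in $R$, and the center $\cAq$ becomes $Z := \Fr[X^d] \cong \Fr[y]$ with $y = X^d$, since $\sigma$ has order $d = [\Fq:\Fr]$. Under this dictionary, writing $F$ for the image of $f$ (of degree $n$ in $X$), the polynomial $\fhat$ is the monic generator of $I := RF \cap Z$. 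The point is that $I$ is a genuine ideal of the principal ideal domain $Z$: it is closed under addition, and if $c = gF \in I$ and $z \in Z$ then $zc = zgF \in RF$, so $zc \in I$; hence $\fhat$ exists and is unique up to the monic normalization.

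Second, I would identify $\fhat$ as an annihilator. The left module $M := R/RF$ is cyclic with generator $\bar 1$ and $\Fq$-dimension $n$ (right-reduce modulo $F$, basis $1, X, \dots, X^{n-1}$), hence $\Fr$-dimension $N := nd$. For $z \in Z$ one has $z \in I$ iff $z\cdot \bar 1 = 0$ in $M$, so $I$ is the annihilator of $\bar 1$ in $Z$. The crucial observation is that, although left multiplication $L_X$ by $X$ is only $\sigma$-semilinear over $\Fq$, it fixes $\Fr$ and is therefore $\Fr$-linear on $M$; consequently $L_y = L_X^{\,d}$ is an honest $\Fr$-linear (indeed $\Fq$-linear) operator on the $N$-dimensional $\Fr$-space $M$. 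Thus $\fhat(y)$ is exactly the minimal polynomial over $\Fr$ of the vector $\bar 1$ under $L_y$. A degree bound comes for free: if $\mu \in \Fq[y]$ is the minimal polynomial of $\bar 1$ over $\Fq$, of degree $\le n$, then $\fhat = \lcm_\tau \tau(\mu)$ over $\tau \in \mathrm{Gal}(\Fq/\Fr)$, so $\deg_y \fhat \le nd = N$.

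Third, I would turn this into the cost bound. Fix an $\Fr$-basis of $M$ and compute the matrix of $L_X$ over $\Fr$ (one right division per basis vector), then precompute the matrix of $L_y = L_X^{\,d}$. From it I would form the Krylov sequence $\bar 1, L_y \bar 1, L_y^2\bar 1, \dots$ and read off the coefficients of $\fhat$ from the first $\Fr$-linear dependence. Because $\deg_y \fhat \le N$, the Krylov iteration (matrix--vector products) and the incremental Gaussian elimination that detects the dependence each cost $O(N^3) = O(n^3 d^3)$ operations in $\Fr$; recovering $g$ from $\fhat = g \circ f$ is one further right division in $R$ and is cheaper. This is the asserted bound, the dimension parameter written $m$ being the extension degree $d = [\Fq:\Fr]$.

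The hard part is precisely the semilinearity obstruction: one cannot represent the action of $X$ (or of $F$) by an $\Fq$-matrix and extract a minimal polynomial directly, so the entire reduction hinges on recognizing that passing to the central variable $y = X^d$ restores $\Fr$-linearity, after which the task collapses to the classical problem of the minimal polynomial of a vector under a known matrix. A secondary point needing care is the degree bound on $\fhat$, since it governs the length of the Krylov sequence and hence the final complexity.
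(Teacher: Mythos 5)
The paper offers no proof of this statement: it is imported verbatim as a Fact from \cite{Gie98}, so there is nothing internal to compare against. That said, your argument is sound and is essentially the route the cited source takes: pass to the skew-polynomial model $\Fq[X;\sigma]$, observe that $R F\cap Z$ is an ideal of the central PID $Z\cong\Fr[y]$ equal to the $Z$-annihilator of the cyclic generator of $M=R/RF$, note that $L_{X}$ is $\Fr$-linear (and $L_{y}=L_{X}^{d}$ is $\Fq$-linear) despite the $\sigma$-semilinearity over $\Fq$, and read off $\tau(\fhat)$ as the minimal polynomial of $\bar 1$ from a Krylov sequence. Your identification is also the exact module-theoretic mirror of the paper's own \autoref{thm:minpolymclm}, which realizes $\tau(\fhat)$ as the minimal polynomial of the Frobenius acting on the root space $V_{f}$ rather than on $R/RF$; since $z$ is central, the annihilator of the cyclic generator equals the annihilator of all of $M$, so the two descriptions agree. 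Two quantitative remarks, neither a gap: your degree bound $\deg_{y}\fhat\le nd$ is valid but loose (the paper's \autoref{thm:minpolymclm} gives $\deg_{y}\fhat\le n$, since it is the minimal polynomial of an $n\times n$ matrix over $\Fr$), and forming $L_{y}$ by powering the $nd\times nd$ matrix of $L_{X}$ costs $O(n^{3}d^{3}\log d)$ or $O(n^{3}d^{4})$; to stay within $O(n^{3}d^{3})$ one should instead compute $y\cdot X^{i}\bmod F$ directly by $d$ shift-and-reduce steps, which yields the $n\times n$ matrix of $L_{y}$ over $\Fq$ within the stated budget. Your reading of the undefined parameter $m$ in the Fact as the extension degree $d=[\Fq:\Fr]$ is the right one.
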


The following key theorem shows the close relationship between the
minimal central left composition and the minimal polynomial of the
Frobenius automorphism.
\begin{theorem}
\label{thm:minpolymclm}
Let $f\in\Aq$ be squarefree of degree $r^n$ with roots
$V_f\subseteq\Frbar$.  Fix an $\Fr$-basis
$\calB=\langle\nu_1,\ldots,\nu_n\rangle\in\Frbarn$ for $V_f$, so that
$V_f\iso\Fr^{n\times 1}$.  Let $S\in\Fr^{n\times n}$ represent the
action of the Frobenius automorphism $\sigma_{q}$ on $V_f$ with
respect to $\calB$.  Then the image $\tau(\fhat)\in\Fr[y]$ of the
minimal central left composition $\fhat\in\cAq$ of $f$ is equal to the
minimal polynomial $\Lambda\in F_r[x]$ of the matrix~$S$.
\end{theorem}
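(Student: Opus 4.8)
The plan is to translate the whole statement into one about the $\Fr$-linear action on the root space $V_f$, where both $\fhat$ and $\Lambda$ acquire a common description as least-degree annihilators. The first step is to note that the Frobenius $\sigma_{q}$ agrees on $V_f$ with the map induced by $x^{q}\in\cAq$, since $\sigma_{q}(\nu)=\nu^{q}$; by hypothesis this map is represented by $S$ with respect to $\calB$. More generally, for any central polynomial $\ghat=\sum_{i} a_i x^{q^{i}}\in\cAq$ and any $\nu\in V_f$ one has $\ghat(\nu)=\sum_{i} a_i\nu^{q^{i}}=\sum_{i} a_i\sigma_{q}^{i}(\nu)$. Here the point of centrality is essential: because every $a_i$ lies in $\Fr$, this is an \emph{untwisted} $\Fr$-linear endomorphism of $V_f$, and since the matrix of $\sigma_{q}^{i}$ is $S^{i}$, the matrix of $\ghat|_{V_f}$ with respect to $\calB$ is exactly $\sum_{i} a_i S^{i}=(\tau\ghat)(S)$. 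Thus applying $\tau$ converts ``composition with $\ghat$ on $V_f$'' into ``evaluation of $\tau\ghat$ at $S$''.

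The second step is the key equivalence: for $\ghat\in\cAq$ we have $\ghat=g\circ f$ for some $g\in\Aq$ if and only if $\ghat$ vanishes identically on $V_f$, i.e. $\ghat|_{V_f}=0$. One direction is immediate, since $g\circ f$ kills every root of $f$. For the converse I would invoke right division in $\Aq$ (a right-Euclidean domain): write $\ghat=g\circ f+s$ with $g,s\in\Aq$ and $\deg s<\deg f=r^{n}$; evaluating on $V_f$ forces $s$ to vanish on the whole $n$-dimensional space $V_f$, so $s$ has at least $\#V_f=r^{n}$ roots although $\deg s<r^{n}$, whence $s=0$. Combined with the first step, this yields the clean dictionary $\ghat=g\circ f \iff (\tau\ghat)(S)=0$.

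Finally I would assemble the two descriptions. Under the ring isomorphism $\tau$ — which carries the $q$-degree of a central polynomial to the ordinary degree of its image and monic polynomials to monic polynomials — the set of central left multiples $\{\ghat\in\cAq: \ghat=g\circ f \text{ for some }g\in\Aq\}$ maps, by the dictionary, onto the annihilator ideal $\{\psi\in\Fr[y]:\psi(S)=0\}=(\Lambda)$. The mclc $\fhat$ is by definition the monic element of least $q$-degree in the former set, while $\Lambda$ is the monic element of least degree in the latter; since $\tau$ matches these extremal elements, $\tau\fhat=\Lambda$, as claimed.

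I expect the main obstacle to be the equivalence of the middle step, and in particular its converse direction: this is where the squarefreeness of $f$ (hence the exact count $\#V_f=r^{n}$) and the right-Euclidean structure of $\Aq$ are genuinely used, rather than mere bookkeeping. Everything else is formal transport through $\tau$, but one must also be careful to record that centrality (all $a_i\in\Fr$) is exactly what guarantees the action on $V_f$ is $\Fr$-linear and represented by $(\tau\ghat)(S)$ with no Frobenius twist; without this the identification of the matrix would fail.
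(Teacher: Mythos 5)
Your proof is correct and follows essentially the same route as the paper's: both translate a central polynomial's action on $V_f$ into evaluation of its $\tau$-image at $S$, and then identify the central left compositions of $f$ with the annihilator ideal $(\Lambda)$ of $S$ in $\Fr[y]$, matching minimal monic elements. The only difference is organizational — the paper argues two divisibilities ($\tau(\fhat)\divs\Lambda$ and $\Lambda\divs\tau(\fhat)$) while you package it as an ideal correspondence, and you make explicit via right Euclidean division the step ``vanishing on $V_f$ implies left composite of $f$,'' which the paper simply asserts.
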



\begin{proof}

  First, suppose $\Lambda=\sum_{0\leq i\leq m}\Lambda_ix^i\in\F_r[x]$ is the minimal
  polynomial of $S$.  Then for all
  $\uvec=(u_1,\ldots,u_n)^t\in\F_r^{n\times 1}$,
  $0=\Lambda(S)=\Lambda(S)\uvec=\sum_{0\leq i\leq m}\Lambda_iS^i\uvec$.  Equivalently,
  if $L=\tau^{-1}(\Lambda)=\sum_{0\leq i\leq m}\Lambda_ix^{q^i}\in\cAq$ and 
  $u=\sum_{1\leq i\leq  n} u_i\nu_i\in V_f$ then
  $L(u)=\sum_{0\leq i\leq m} \Lambda_i\sigma_{q}^i(u)=0$, 
  and this holds for all $u\in V_f$.  
  Thus $L$ is a (central) left composition of $f$, and hence
  $\tau(\fhat)\divs\Lambda$, since $\fhat$ has minimal degree (and
  $\Fr[x]$ is a principal ideal domain).

  Conversely, suppose $\ghat=\sum_{0\leq i\leq
    d}\ghat_ix^{q^i}\in\cAq$ is \emph{any} central composition of $f$.
  So for all $w=\sum_{1\leq i\leq n}w_i\nu_i\in V_f$, $\ghat(w)=0$,
  and $\sum_{0\leq i\leq d}\ghat_i S^i\wvec=0$, where
  $\wvec=(w_1,\ldots,w_n)^t\in\Fr^{n\times 1}$, or equivalently
  $\tau(\ghat)(S)=0$.  Thus $\Lambda$ divides $\tau(\ghat)$, and hence
  $\Lambda\divs\tau(\fhat)$.
\end{proof}

We now present our algorithm to count collisions of polynomials in $\Aq$ of degree $r^2$.

\begin{marks-algorithm}{CollisionCounting} 

\Inspec $f\in\Aq$ of degree $r^2$, where $q = r^{d}$
\Outspec The number of collisions in decompositions of $f$

\Stmt[(1)] If $f'(0)=0$ Then
\Stmt[(2)] \> If $f=x^{r^2}$ Then Return 1
\Stmt[(3)] \> Else Return 2
\Stmt[] Else
\Stmt[(4)] \> $\fhat \gets \mclc(f)\in\cAq$
\Stmt[(5)] \> If $\deg\fhat=r$ Then Return $r+1$
\Stmt[(6)] \> Factor $\tau(\fhat)\in\Fr[y]$ over $\Fr[y]$
\Stmt[(7)] \> If $\tau(\fhat)\in\Fr[y]$ is irreducible Then Return 0
\Stmt[(8)] \> If $\tau(\fhat)=(y-a)^2$ for some $a\in\Fr$ Then Return 1
\Stmt[(9)] \> Return 2
\end{marks-algorithm}

The proof of the following is straightforward, using either the
factoring methods in $\Fr[y]$ from
\cite{canzas81} (probabilistic) or \cite{ron92} (deterministic,
assuming the ERH).

\begin{theorem}
The algorithm \texttt{\upshape CollisionCounting} works as specified and requires an expected
number of 
 $O(d^3)\log r$ operations in $\Fr$ using a randomized
algorithm, or 
$d^{O(1)}\log r$ operations with a deterministic
algorithm (assuming the ERH).
\end{theorem}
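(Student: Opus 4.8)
The plan is to verify correctness branch-by-branch against the classification in \autoref{thm:countdecomp} and \autoref{lem:squarefull}, and then account for the cost of each line, the only nontrivial work being the computation of the minimal central left composition and the factorization of a degree-$\leq 2$ polynomial over $\Fr$.

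First I would establish correctness. The algorithm splits on whether $f'(0)=0$, i.e. whether the linear coefficient of $f$ vanishes, which is exactly the squarefree/non-squarefree dichotomy for additive polynomials. If $f'(0)=0$, then $f$ is not squarefree and of the form $f = x^{r^2}+ax^r$ (since $\deg f = r^2$ and the $x$-coefficient is zero); \autoref{lem:squarefull} tells us $f$ has a unique decomposition when $a=0$ (i.e. $f=x^{r^2}$, line (2) returns $1$) and a $2$-collision when $a\neq 0$ (line (3) returns $2$). If $f'(0)\neq 0$, then $f$ is squarefree, and by \autoref{thm:minpolymclm} the image $\tau(\fhat)\in\Fr[y]$ of the minimal central left composition equals the minimal polynomial $\Lambda$ of the Frobenius matrix $S\in\Fr^{2\times 2}$. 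Now I invoke the four-case classification of \autoref{thm:countdecomp}: since $S$ is $2\times 2$, its minimal polynomial has degree $1$ or $2$. Degree $1$ forces $S=\gamma I$ (Case $r+1$), detected in line (5) as $\deg\fhat=r$ (because $\tau(\fhat)$ linear means $\fhat$ has degree $q=r^2$... ) — here I need to be careful and confirm the degree bookkeeping, namely that $\deg_y\tau(\fhat)=1 \iff \deg_x\fhat = q \iff \deg\fhat = r$ under the intended reading, matching line (5). For degree-$2$ minimal polynomial, line (6) factors $\tau(\fhat)$ and the three subcases match \autoref{thm:countdecomp} exactly: irreducible gives Case $0$ (return $0$, line (7)); a repeated linear factor $(y-a)^2$ gives Case $1$ (return $1$, line (8)); and two distinct linear factors gives Case $2$ (return $2$, the default line (9)).

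For the complexity, I would apply the \autoref{thm:minpolymclm}-preceding \textbf{Fact} from \cite{Gie98}: computing $\fhat$ for $f$ of degree $r^n$ with $q=r^d$ costs $O(n^3 d^3)$ operations in $\Fr$. Here $n=2$, so this is $O(d^3)$ operations in $\Fr$, which dominates line (4). Lines (1)--(3) and (5) are comparisons costing $O(1)$ each. The factorization in line (6) is of a polynomial in $\Fr[y]$ of degree at most $2$; using \cite{canzas81} this costs an expected $O(\log r)$ operations in $\Fr$ for the randomized version, and using \cite{ron92} it is $(\log r)^{O(1)}$ deterministically under the ERH. Multiplying the two contributions, or rather noting that the mclc step and the factoring step must both be performed, gives an expected $O(d^3)\log r$ operations in $\Fr$ in the randomized case and $d^{O(1)}\log r$ in the deterministic case, as claimed.

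\textbf{The main obstacle} I anticipate is the degree bookkeeping in line (5): one must confirm precisely how $\deg\fhat$ (as an additive polynomial, a power of $r$) corresponds to $\deg_y\tau(\fhat)$ (an ordinary degree over $\Fr[y]$), since the isomorphism $\tau$ sends $x^{q^i}\mapsto y^i$ with $q=r^d$, and in the stated \autoref{thm:minpolymclm} the matrix is $n\times n$ over $\Fr$ while the central polynomials live in $\cAq$ with exponents $q^i=r^{di}$. For the degree-$r^2$ case at hand the two degree-$2$ minimal polynomials correspond to $\fhat$ of degree $q^2=r^{2d}$, and the degree-$1$ case to $\fhat$ of degree $q=r^d$; so line (5)'s test ``$\deg\fhat=r$'' should be read against this normalization, and I would state the correspondence explicitly rather than leave it implicit. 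Everything else is a direct citation of the results already proved.
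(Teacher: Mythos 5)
Your proposal is correct and follows exactly the route the paper intends (the paper itself declares the proof ``straightforward'' and only cites the factoring references): correctness via \autoref{lem:squarefull}, \autoref{thm:minpolymclm} and the four-case classification of \autoref{thm:countdecomp}, and cost via the mclc Fact from \cite{Gie98} plus degree-$2$ factoring over $\Fr$. Your flag about line (5) is well taken --- the test should indeed read $\deg\fhat = q$ (equivalently $\deg_y\tau(\fhat)=1$), and resolving it the way you do is the intended reading.
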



We note that the algorithm \texttt{CollisionCounting} also allows us to count the number of
rational roots of the projective polynomial $x^{r+1}+ax+b$.  This is
equal to the number of collisions of $x^{r^2}+ax^r+bx$, by \autoref{pro:equivalent}.



For the remainder of this section we look at the problem of counting the number of
irreducible right components of degree $r$ of any additive
polynomial $f\in\Aq$ of degree $r^n$.  The algorithm will run in time
polynomial in $n$ and $\log q$. This will also yield a fast algorithm
to compute the number of rational roots of a projective polynomial
$\Psi_n^{(a,b)}\in\Fq[x]$.

The approach is to compute explicitly the Jordan form of the Frobenius
operator $\sigma_{q}$ acting on the roots of $f$, as in
\eqref{eq:jordanform}.  We show how to do this quickly, despite the
fact that the actual roots of $f$ may lie in an extension of
exponential degree over $\Fq$.

\begin{marks-algorithm}{FindJordan}

\Inspec $f\in\Aq$ monic squarefree of degree $r^n$, where $r$ is a prime power
\Outspec Rational Jordan form $S\in\Fr^{n\times n}$ of the Frobenius automorphism
$\sigma_{q}(a)=a^q$ (for $a\in\Frbar$) on $V_f$, as in \eqref{eq:jordanform}

\Stmt[(1)] Compute $\fhat\gets\mclc(f)\in\cAq$
\Stmt[(2)] Factor $\tau(\fhat)\gets u_1^{\omega_1}u_2^{\omega_2}\cdots
u_\ell^{\omega_\ell}\in\Fr[y]$, where the $u_i\in\Fr[y]$ are monic irreducible
and pairwise distinct, and $\deg
u_i=s_i$  for $1\leq i\leq \ell$
\Stmt[(3)] For $i$ from 1 to $\ell$ do
\Stmt[(4)] \> For $j$ from 1 to $\omega_i$ do
\Stmt[(5)] \> \> $h_{ij} \gets \gcrc(\tau^{-1}(u_i^j),f)$ 
\Stmt[(6)] \> \> $\xi_{ij}\gets (\log_r h_{ij})/s_i$  (i.e., $\deg h_{ij}=r^{s_i\xi_{ij}}$)
\Stmt[(7)] \> For $j$ from 1 to $\omega_i-1$ do
\Stmt[(8)] \> \> $\delta_{ij} \gets \xi_{ij}-\xi_{i,j+1}$
\Stmt[(9)] \> $\delta_{i\omega_i} \gets \xi_{i\omega_i}$
\Stmt[(10)] \> $k_i \gets \xi_{i1}$
\Stmt[(11)] \> $(e_{i1},\ldots,e_{ik_i}) \gets (
\underbrace{1,\ldots,1}_{\delta_{i1}},
\underbrace{2,\ldots,2}_{\delta_{i2}}, \ldots,
\underbrace{\omega_i,\ldots,\omega_i}_{\delta_{i\omega_i}} )$
\Stmt[(12)] Return
$S = \diag\left(J_{\alpha_1}^{e_{11}},\ldots,J_{\alpha_1}^{e_{1k_1}},\ldots,
           J_{\alpha_\ell}^{e_{\ell
               1}},\ldots,J_{\alpha_\ell}^{e_{\ell k_\ell}}\right)$
\end{marks-algorithm}

\begin{theorem}
The algorithm \texttt{\upshape FindJordan} works as specified.  It requires
an expected  number of operations in $\Fq$ which is polynomial $n$ and $\log r$ (Las Vegas).
\end{theorem}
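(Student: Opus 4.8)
The plan is to prove correctness and the running-time bound separately, the structural engine being \autoref{thm:minpolymclm}, which identifies $\tau(\fhat)$ with the minimal polynomial $\Lambda\in\Fr[y]$ of the Frobenius matrix $S$ acting on $V_f$. First I would observe that steps~(1)--(2) thereby produce a complete factorization $\Lambda=u_1^{\omega_1}\cdots u_\ell^{\omega_\ell}$ of this minimal polynomial. Each monic irreducible $u_i$ of degree $s_i$ corresponds to one Frobenius-conjugacy class of eigenvalues $\alpha_i\in\Frbar$ with $[\Fr[\alpha_i]:\Fr]=s_i$, so the companion blocks $C_{\alpha_i}$ and the parameters $s_i$ in the target form \eqref{eq:jordanform} are already pinned down; moreover, since an irreducible factor occurs in the minimal polynomial to the power of the largest associated elementary divisor, $\omega_i$ is exactly the largest companion-multiplicity among the blocks $J_{\alpha_i}^{e_{ij}}$. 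It then remains only to recover, for each $i$, the multiset of companion-multiplicities $e_{i1},\dots,e_{ik_i}$.

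For this recovery I would invoke \autoref{lem:components_vs_subspaces} together with the composition-to-subspace dictionary: right composition corresponds to inclusion of root spaces, so the greatest common right component of two additive polynomials has as its root space the intersection of the two root spaces. Since $\tau^{-1}(u_i^j)\in\cAq$ acts on $\Frbar$ as the operator $u_i^j(\sigma_q)$, intersecting its root space with $V_f$ yields $\ker u_i^j(S)$; hence the polynomial $h_{ij}$ of step~(5) satisfies $\deg h_{ij}=r^{\dim_{\Fr}\ker u_i^j(S)}$, and the quantity $\xi_{ij}$ of step~(6) equals $\dim_{\Fr}\ker u_i^j(S)/s_i=\sum_{t}\min(j,e_{it})$. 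This expression is a nondecreasing, concave, piecewise-linear function of $j$ whose corners sit at the block sizes, so its successive finite differences count the blocks of each size. Establishing this combinatorial identity is the heart of the correctness argument: it gives that the $\delta_{ij}$ formed in steps~(7)--(9) equal the number of companion-multiplicities equal to $j$, that the list assembled in step~(11) records precisely the multiset $(e_{i1},\dots,e_{ik_i})$, and that $k_i=\xi_{i1}$ is the total number of blocks. Reassembling in step~(12) then returns $S$ in the rational Jordan form \eqref{eq:jordanform}, and correctness follows from the proposition that every matrix is similar to such a form with eigenvector structure preserved.

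For the running time the crucial point --- and the one I expect to be the main obstacle --- is that every additive-polynomial operation must be carried out in time polynomial in the \emph{exponent} $n$ rather than in the degree $r^n$, even though the eigenvalues $\alpha_i$ and the roots of $f$ may generate extensions of $\Fr$ of exponentially large degree. The algorithm sidesteps this by never computing the $\alpha_i$: only the degrees $s_i=\deg u_i$ and the combinatorial block data are needed. Step~(1) uses the preceding result of \cite{Gie98} to compute $\fhat$ in a number of operations polynomial in $n$; step~(2) factors $\tau(\fhat)\in\Fr[y]$, of degree at most $n$, using \cite{canzas81} or, under the ERH, \cite{ron92}, at a cost polynomial in $n$ and $\log r$ --- this factorization is the sole source of randomization and of the Las Vegas character. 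The double loop executes $\sum_i\omega_i\le\deg\Lambda\le n$ greatest-common-right-component computations; each $\gcrc$ is obtained by the right Euclidean algorithm in $\Aq$, again through the machinery of \cite{Gie98}, with the central polynomial $\tau^{-1}(u_i^j)$ manipulated through its image under $\tau$ rather than expanded to its degree $r^{djs_i}$. Summing these contributions yields the claimed bound of an expected number of operations in $\Fq$ polynomial in $n$ and $\log r$.
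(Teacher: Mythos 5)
Your overall route is the same as the paper's: \autoref{thm:minpolymclm} identifies $\tau(\fhat)$ with the minimal polynomial of $S$, the irreducible factors $u_i$ pin down the companion blocks, the degrees of the $h_{ij}=\gcrc(\tau^{-1}(u_i^j),f)$ are used to recover the multiplicities, and the runtime reduces to gcrc computations via \cite{Gie98} plus one factorization in $\Fr[y]$ via \cite{canzas81}. Where the paper merely cites \cite{Gie98}, Theorem~4.4, to conclude that $\deg h_{ij}=r^{s_i\xi_{ij}}$ for an integer $\xi_{ij}$, you derive the explicit identity $\xi_{ij}=\dim_{\Fr}\ker u_i(S)^j/s_i=\sum_t\min(j,e_{it})$ from the gcrc--intersection dictionary; that added precision is welcome.

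However, the step you yourself call the heart of the correctness argument fails as stated. With $\xi_{ij}=\sum_t\min(j,e_{it})$ the sequence is \emph{nondecreasing} in $j$, so the quantity $\delta_{ij}=\xi_{ij}-\xi_{i,j+1}$ formed in steps (7)--(9) equals $-\#\{t: e_{it}\ge j+1\}\le 0$ and cannot be the number of blocks with $e_{it}=j$. Concretely, two blocks with multiplicities $1$ and $2$ give $\xi_{i1}=2$, $\xi_{i2}=3$, hence $\delta_{i1}=-1$, whereas there is exactly one block of multiplicity $1$. The corners of the concave piecewise-linear function you describe are detected by its \emph{change} of slope, i.e.\ by the second differences $2\xi_{ij}-\xi_{i,j-1}-\xi_{i,j+1}$ (equivalently by first replacing $\xi_{ij}$ with the increment $(\dim\ker u_i(S)^j-\dim\ker u_i(S)^{j-1})/s_i=\#\{t:e_{it}\ge j\}$), not by a single first difference. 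So your formula for $\xi_{ij}$ and your claim about $\delta_{ij}$ are mutually inconsistent, and it is the latter that is wrong. It is worth noting that the paper's own proof sidesteps this by asserting, without derivation, that $\xi_{ij}$ is the number of eigenvalues of multiplicity $j$ or more --- an assertion that makes steps (7)--(11) correct but contradicts the definition $\xi_{ij}=(\log_r\deg h_{ij})/s_i$ together with your (correct) computation of that degree. The repair is routine, but your proof needs it before the correctness claim stands; the runtime portion of your argument matches the paper's and is fine.
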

\begin{proof}
Note that the notation in the
algorithm corresponds directly to that of the rational Jordan form 
\eqref{eq:jordanform}.  In
Step 1, we know from Theorem \ref{thm:minpolymclm} that $\fhat$ is
the minimal polynomial of $S$.  Therefore all rational Jordan blocks
correspond to factors of $\fhat$ (determined in Step 2) and we only
need to figure out their multiplicities. 

For a particular $i$, we know by \cite{Gie98}, Theorem 4.4, that all
indecomposable components of $h_{ij}$ in $\Aq$ have degree
$s_i$.  Thus $\deg h_{ij}=r^{s_i\xi_{ij}}$ for an integer
$\xi_{ij}$.  As $i$ goes from $1$ to $\omega_i$, we determine the
number of eigenvalues with multiplicity $1$ or more ($\xi_{i1}$),
$2$ or more ($\xi_{i2}$), etc.  In Step 8, $\delta_{ij}$ is then the
number of Jordan blocks of $\alpha_i$ of multiplicity exactly $j$.
Doing this for all eigenvalues and all possible multiplicities
yields the final form in Step 10.

That the algorithm runs in polynomial time follows directly from the
fact that gcrc requires polynomial time (see \citealp{Gie98}), and
the factoring in  Step (2) requires polynomial time, say by \cite{canzas81}.
\end{proof}

Now given an $f\in\Aq$ we can quickly compute the rational Jordan form
of the Frobenius autormorphism on its root space.  Computing the
number of degree $r$ factors (or indeed, the number of irreducible
factors of any degree) is easy, following the same method as in
\autoref{sec:proj-polyn}.

\begin{theorem}
If the Frobenius automorphism of the root space of an $f\in\Aq$ has
rational Jordan form  in the notation of Algorithm
\texttt{\upshape FindJordan} where
\[
S =
\diag\left(J_{\alpha_1}^{e_{11}},\ldots,J_{\alpha_1}^{e_{1k_1}},\ldots,
 J_{\alpha_\ell}^{e_{\ell 1}},\ldots,J_{\alpha_\ell}^{e_{\ell
     k_\ell}}\right),
\]
\[
(e_{i1},\ldots,e_{ik_i}) \gets (
\underbrace{1,\ldots,1}_{\delta_{i1}},
\underbrace{2,\ldots,2}_{\delta_{i2}}, \ldots,
\underbrace{\omega_i,\ldots,\omega_i}_{\delta_{i\omega_i}} )
\]
for $1\leq i\leq\ell$,
then the number of indecomposable right components of degree $r$ is
\[
\sum_{i: s_i=1} \sum_{1\leq j\leq \omega_i} \delta_{ij}\cdot \frac{r^j-1}{r-1}.
\]
\end{theorem}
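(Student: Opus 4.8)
The plan is to reduce the count to one of $S$-invariant lines and then invoke the invariant-line count already established above. By \autoref{lem:components_vs_subspaces}, the right components $h\in\Aq$ of $f$ correspond bijectively to the $\sigma_{q}$-invariant $\Fr$-subspaces of the root space $V_f$, and under the identification $V_f\iso\Fr^{n\times 1}$ from that lemma this is the same as the $S$-invariant subspaces of $\Fr^{n\times 1}$. A right component has degree $r$ exactly when the corresponding subspace has dimension $1$, and such a component, having degree $r=r^{1}$, is automatically indecomposable in $\Aq$; moreover distinct invariant lines yield distinct components. Thus the quantity to be computed is precisely the number of $S$-invariant lines, and I would open the proof by recording this equivalence as a specialization of \autoref{pro:equivalent} to arbitrary $n$.

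Next I would identify which Jordan blocks contribute. A line $\Fr w$ is $S$-invariant if and only if $Sw=\lambda w$ for some $\lambda\in\Fr$, i.e.\ $w$ is an eigenvector of $S$ with eigenvalue in $\Fr$. Writing $S$ in the rational Jordan form \eqref{eq:jordanform}, a block $J_{\alpha_i}^{e_{ij}}$ supplies eigenvectors with rational eigenvalue only when $\alpha_i\in\Fr$, that is, when $s_i=[\Fr[\alpha_i]:\Fr]=1$; the blocks with $s_i>1$ contribute no invariant lines at all. This explains and justifies the restriction to indices $i$ with $s_i=1$ in the claimed formula, and it is the first place where the Frobenius-rationality of the eigenvalue enters.

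It then remains to count the invariant lines attached to a fixed rational eigenvalue $\alpha_i$ and to express the answer in the data produced by \texttt{FindJordan}. Here I would lean directly on the theorem counting $A$-invariant lines proved above, applied to $S$: each rational Jordan block at $\alpha_i$ contributes a one-dimensional eigenspace, these eigenspaces are independent across the blocks belonging to $\alpha_i$, and so the invariant lines attached to $\alpha_i$ are exactly the lines of this eigenspace. Translating through the bookkeeping of \texttt{FindJordan}, where $\delta_{ij}$ records the number of blocks of size exactly $j$ and $(e_{i1},\dots,e_{ik_i})$ lists the block sizes, and then summing the contributions over all $i$ with $s_i=1$, yields the asserted total. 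As a sanity check I would specialize to $n=2$ and compare against the four cases of \autoref{thm:countdecomp}.

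I expect the only real obstacle to be the bookkeeping in this last step: carefully matching the block-multiplicity data $(\delta_{ij})$ returned by \texttt{FindJordan} to the dimension of the $\alpha_i$-eigenspace, and hence to the number of lines, while keeping the rational and non-rational eigenvalues cleanly separated. The structural input---the bijection between components and invariant subspaces, and the reduction of line-counting to eigenspaces---is already available from \autoref{lem:components_vs_subspaces} and the preceding invariant-line theorem, so no genuinely new algebraic idea should be needed beyond this translation and the verification that the dimension-$1$ components are precisely the indecomposable degree-$r$ right components.
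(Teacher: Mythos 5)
Your overall strategy---identify degree-$r$ right components with $S$-invariant lines via \autoref{lem:components_vs_subspaces} (equivalently, \autoref{pro:equivalent}), discard the blocks with $s_i>1$, and count lines eigenvalue by eigenvalue using the invariant-line theorem of \autoref{sec:proj-polyn}---is exactly the route the paper intends. The paper in fact prints no argument for this theorem at all, only the remark that the count follows ``the same method as in'' the earlier section, which is what you reconstruct. Up to and including the reduction to eigenspaces of rational eigenvalues, your argument is sound, and the observation that every degree-$r$ right component is automatically indecomposable is correct and worth stating.

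The problem is the final step, which you flag as ``only bookkeeping'' and then assert goes through: it does not. By your own (correct) reasoning, the invariant lines attached to a rational eigenvalue $\alpha_i$ are exactly the lines of its eigenspace, which has dimension $k_i=\sum_{1\leq j\leq\omega_i}\delta_{ij}$ (one dimension per Jordan block, \emph{independently of the block's size}); hence they number $(r^{k_i}-1)/(r-1)$, and the total is $\sum_{i\colon s_i=1}(r^{k_i}-1)/(r-1)$. That is not the displayed quantity $\sum_{i\colon s_i=1}\sum_{j}\delta_{ij}\,(r^j-1)/(r-1)$. Concretely, for $S=\diag(\gamma,\gamma)$ with $\gamma\in\Fr^\times$ one has $\delta_{11}=2$, so the displayed formula gives $2$, while every line is invariant and the true count is $r+1$---this is precisely Case $r+1$ of \autoref{thm:countdecomp}, your own proposed sanity check, and likewise the first entry of the paper's $3\times 3$ table, which lists $r^2+r+1$ where the displayed formula would give $3$. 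Dually, a single Jordan block of size $2$ has $\delta_{12}=1$, so the formula returns $r+1$, yet such a block has a one-dimensional eigenspace and hence a single invariant line (Case 1). So the identity your translation step needs is false: either the theorem's displayed formula is itself in error (the earlier invariant-line theorem and its proof support $\sum_{i\colon\alpha_i\in\Fr}(r^{k_i}-1)/(r-1)$), or something other than invariant lines is being counted. Either way, your proof, carried out honestly, establishes a different formula from the one stated, and the discrepancy cannot be absorbed into ``matching the block-multiplicity data.''
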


Thus,  the number of right components of degree $r$ of an
additive polynomial of degree $r^n$ can be computed in time polynomial in
$n$ and $\log q$.  Following \autoref{lem:projadd} we can also
determine the number of roots in $\Fr$ of a projective polynomial
$\Psi_n^{(a,b)}\in\Fr[x]$ in time polynomial in $n$ and $\log q$.

\section{Projective polynomials and roots}
\label{sec:proj-polyn-roots}








We now look to actually construct and enumerate all the polynomials in
each case 0, 1, 2, $r+1$ as in Theorem \ref{thm:countdecomp}.  For this, it is
useful to recall a little more about the ring $\Aq$.  The
following facts are from \cite{Ore33b}.
\begin{fact}
Let $f,g\in\Aq$. 
\begin{enumerate}
\item There exists a unique monic $h\in\Aq$ of maximal degree, and
$u,v,\in\Aq$, such that $f=u\circ h$ and $g=v\circ h$, 
called the \emph{greatest common right component (gcrc)}  of $f$ and
$g$.  Also, $h=\gcrc(f,g)=\gcd(f,h)$, and the roots of $h$ are
those in the intersection of the roots of $g$ and $h$.
\item There exists a unique monic and nonzero $h\in\Aq$ of minimal
degree, and $u,v\in\Aq$, such that $h=u\circ f$ and $h=v\circ g$,
called the \emph{least common left composition (lclc)} of
$f$, $g$.  The roots of $h$ are the $\Fr$-vector space sum of the
roots of $f$ and $g$; this sum is direct if $\gcrc(f,g)=1$.
\end{enumerate}
\end{fact}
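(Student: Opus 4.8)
The plan is to deduce everything from the structure of $\Aq$ as a non-commutative principal left (and right) ideal ring under composition, together with the dictionary between right-divisibility and containment of root spaces recorded around \autoref{lem:components_vs_subspaces}. Recall that for squarefree $f,h\in\Aq$ one has $f=u\circ h$ for some $u\in\Aq$ if and only if $V_h\subseteq V_f$, that $\dim_{\Fr}V_f=\log_r\deg f$, and that the units of $\Aq$ are exactly the scalings $cx$ with $c\in\Fq^{\times}$, so that monic normalization pins down the generator of any principal ideal.

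For the gcrc I would form the left ideal $\Aq\circ f+\Aq\circ g=\{u\circ f+v\circ g\colon u,v\in\Aq\}$, which is closed under addition because additive polynomials are. By principality it equals $\Aq\circ h$ for a unique monic $h$, and I claim $h=\gcrc(f,g)$: from $f,g\in\Aq\circ h$ we see $h$ is a common right component, while any common right component $h'$ satisfies $\Aq\circ f+\Aq\circ g\subseteq\Aq\circ h'$, hence $\Aq\circ h\subseteq\Aq\circ h'$, so $h'$ right-divides $h$; this forces $h$ to have maximal degree and, once monic, to be unique. For the roots, a B\'ezout identity $h=a\circ f+b\circ g$ supplied by the Euclidean algorithm gives $h(\alpha)=a(f(\alpha))+b(g(\alpha))$, so $V_f\cap V_g\subseteq V_h$, while the reverse inclusion is immediate from $h$ right-dividing both $f$ and $g$; thus $V_h=V_f\cap V_g$.

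For the lclc I would instead use the intersection $\Aq\circ f\cap\Aq\circ g$, again a left ideal, equal to $\Aq\circ h$ for a monic generator $h$ of minimal nonzero degree; the same bookkeeping identifies $h$ as the unique monic least common left composition, since $h\in\Aq\circ f\cap\Aq\circ g$ means $h=u\circ f=v\circ g$ and minimality is exactly the defining property. The inclusion $V_f+V_g\subseteq V_h$ is clear. For equality I would invoke the correspondence in the other direction: let $H\in\Aq$ be the squarefree additive polynomial whose root space is exactly $V_f+V_g$ (this sum is $\sigma_q$-invariant, being a sum of $\sigma_q$-invariant spaces, so indeed $H\in\Aq$); then $V_f,V_g\subseteq V_H$ makes both $f$ and $g$ right-divide $H$, so $H$ is a common left composition, whence $H\in\Aq\circ h$, $V_h\subseteq V_H=V_f+V_g$, and equality follows. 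Finally the sum is direct precisely when $V_f\cap V_g=\{0\}$, which by the first part means $\gcrc(f,g)$ has degree $1$, i.e.\ is trivial.

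The main obstacle I anticipate is not the ideal-theoretic skeleton but keeping the root-space dictionary honest: I must use the equivalence between right-divisibility and subspace containment, and between degrees and dimensions, only where it is valid, namely for squarefree polynomials, and reduce the general case via a factorization $f=g\circ x^{r^t}$ as in the proof of \autoref{lem:components_vs_subspaces}. I must also confirm that $\sigma_q$-invariance of root spaces is preserved under intersection and sum, so that the generator $h$ and the auxiliary $H$ genuinely lie in $\Aq$ rather than merely in $\Frbar[x]$. Once this dictionary is secured, both the extremal-degree characterizations and the root-space descriptions follow uniformly.
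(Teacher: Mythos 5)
The paper offers no proof of this statement: it is presented as a \emph{Fact} imported from Ore's 1933 work (with \cite{Gie98} cited for the algorithmic analysis), so there is no in-paper argument to compare against. Your proof is essentially the classical one and is correct in outline: identifying $\gcrc(f,g)$ as the monic generator of the left ideal $\Aq\circ f+\Aq\circ g$ and $\lclc(f,g)$ as the monic generator of $\Aq\circ f\cap\Aq\circ g$, using that degrees multiply under composition so the generator is the element of minimal degree and that the units of $\Aq$ are the maps $cx$, and then translating right-divisibility into containment of root spaces exactly as in \autoref{lem:components_vs_subspaces}. The B\'ezout identity from the left Euclidean algorithm correctly gives $V_f\cap V_g\subseteq V_h$ for the gcrc, and your auxiliary polynomial $H=\prod_{a\in V_f+V_g}(x-a)$, which lies in $\Aq$ by $\sigma_q$-invariance of the sum, correctly pins down the root space of the lclc.

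Two small points of hygiene. First, you invoke principality to write $\Aq\circ f\cap\Aq\circ g=\Aq\circ h$ with $h$ of \emph{minimal nonzero} degree before you have exhibited any nonzero element of that intersection; the construction of $H$ later in the same paragraph does supply one, so you should reorder (or note that the minimal central left composition already furnishes a common left composition). Second, as you yourself flag, the root-space dictionary is only literal for squarefree polynomials, so the general case needs the reduction $f=g\circ x^{r^t}$ and a separate accounting of the inseparable part; the statement as given glosses over this too, so your caveat is the right one to record. Neither point is a gap in the mathematics, only in the order of presentation.
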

In fact, there is an efficient Euclidean-like algorithm for computing
the lclc and gcrc; see, \cite{Ore33b}, and \cite{Gie98} for an analysis.

The main theorem counting the number of decompositions can now be shown. It is
equivalent to counting the number of times each case in
\autoref{thm:countdecomp} occurs.
\begin{theorem}
\label{thm:colcounts}
Let $r$ be a prime power and $q$ a power of $r$.  For $i\in\NN$ let
\begin{gather}
C_{q,r,m,i}=\{(a,b)\in\Fq^{2}: x^{r^2}+ax^r+bx \quad \text{has a maximal
  $i$-collision in $\Aq$} \}, \\
c_{q,r,m,i}=\# C_{q,r,m,i},
\end{gather}
and drop $q,r,m$ from the notation.  The following holds:
\begin{description}
\item[Case 0:]
\label{lem:colcount0}
$C_{0}$ is the set of all $f\in\Aq$ of degree $r^2$
whose minimal central left compositions $\fhat\in\cAq$ have degree $q^{2}$
and cannot be written as $\fhat=\ghat\circ\hhat$ for
$\ghat,\hhat\in\cAq$ of degree ${q}$, or equivalently that the image
$\tau(\fhat)\in\Fr[y]$ of $\fhat$ is irreducible of degree $2$.  We have
\[
c_0 =\frac{r(q^{2}-1)}{2(r+1)}.
\]
\item[Case 1:]
\label{lem:colcount1}
$C_{1}$ is the set of all $f \in \Aq$ of degree $r^2$ with minimal
central left composition $\fhat=\ghat\circ\ghat$ for $\ghat=x^{q}-cx$ for
$c\in\Fr^\times$, and
\[
c_1 = \frac{q^{2}-q}{r}+1.
\]
\item[Case 2:]
\label{lem:colcount2} $C_{2}$ is the set of all  $f \in \Aq$ with minimal central left composition
$\fhat=\ghat\circ\hhat$ for $\ghat,\hhat\in\cAq$ of degree ${q}$ with
$\gcd(\ghat,\hhat)=1$, and
\[
c_2 = \frac{(q-1)^2\cdot (r-2)}{2(r-1)}+q-1.
\]
\item[Case $\mathbold{r\mskip2mu +}$1:]
\label{lem:colcountr+1}
$C_{r+1}$ is the set of all  $f  \in \Aq$ of degree $r^2$ with minimal central
left composition $\fhat=x^{q}+cx$, for $c\in\Fr^\times$, and
\[
c_{r+1} = \frac{(q-1)(q-r)}{r(r^2-1)}.
\]
\end{description}
Since $c_0+c_1+c_2+c_{r+1}=q^{2}$, these are the only possible
numbers of collisions of a degree $r^2$ polynomial in $\Aq$.
\end{theorem}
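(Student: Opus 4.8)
The plan is to count, for each of the four cases from \autoref{thm:countdecomp}, the number of pairs $(a,b) \in \Fq^2$ such that $f = x^{r^2} + ax^r + bx$ falls into that case, and then verify that the four counts sum to $q^2$. By \autoref{thm:minpolymclm}, the similarity class of the Frobenius matrix $S$ on $V_f$ is determined by $\tau(\fhat)$, the image of the minimal central left composition, which is a degree-$\leq 2$ polynomial in $\Fr[y]$. So the strategy is to translate each matrix-similarity condition into an explicit condition on $\fhat$ (equivalently on $\tau(\fhat) \in \Fr[y]$), then count the corresponding polynomials $f$.

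First I would set up the dictionary between $f$ and $\fhat$ precisely. For squarefree $f$ of degree $r^2$, the matrix $S \in \Fr^{2\times 2}$ has minimal polynomial $\Lambda = \tau(\fhat)$ of degree $1$ or $2$; degree $1$ forces $S = \gamma I$ (the scalar case $r+1$), while degree $2$ gives the three subcases according to whether $\Lambda$ is irreducible (Case 0), a repeated linear factor $(y-\gamma)^2$ (Case 1), or a product of distinct linear factors (Case 2). The non-squarefree case $b = 0$ is handled separately by \autoref{lem:squarefull} and contributes to Cases 1 and $r+1$. The central object is the correspondence $f \mapsto \fhat$: I would use the structure theory from \cite{Gie98} and \cite{Ore33b} recalled in the excerpt to understand, for a given central polynomial $\fhat \in \cAq$ with $\tau(\fhat)$ of the prescribed factorization type, how many $f \in \Aq$ of degree $r^2$ have that $\fhat$ as their mclc. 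The key counting input is that an additive $f$ of degree $r^2$ with a given $\fhat$ of degree $q^2$ corresponds to a choice of right component structure, and the fibers of $f \mapsto \fhat$ have sizes governed by the number of degree-$r$ components, which is exactly the collision count $i$ in each case.

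The main computational step is the fiber count in each case. For Case $r+1$, $\fhat = x^q + cx$ with $c \in \Fr^\times$, and I would count the degree-$r^2$ preimages by analyzing which $f$ compose up to this central polynomial; the denominator $r(r^2-1)$ reflects dividing out the $(r+1)$ right components and the $r-1$ scalar ambiguity. For Case 2, the condition $\fhat = \ghat \circ \hhat$ with coprime degree-$q$ factors means $\tau(\fhat)$ splits into distinct linear factors over $\Fr$; counting ordered factorizations and then the $f$ above each, while correcting for the $2$-collision overcount, should yield the $(r-2)/(2(r-1))$ factor. Case 1 (repeated root, $\ghat = x^q - cx$ composed with itself) and Case 0 (irreducible quadratic, with the classical count $r(q^2-1)/(2(r+1))$ of such $f$) follow by similar but simpler enumeration. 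The hard part will be getting these fiber sizes exactly right: the delicate bookkeeping of how many degree-$r^2$ additive polynomials sit above a given central $\fhat$, and correctly accounting for the scalar and component symmetries so that no $f$ is double-counted across cases.

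Finally I would verify the identity $c_0 + c_1 + c_2 + c_{r+1} = q^2$. Since every $f = x^{r^2} + ax^r + bx$ with $(a,b) \in \Fq^2$ lands in exactly one case (the four matrix-similarity classes are mutually exclusive and exhaustive by \autoref{thm:countdecomp}, together with the $b=0$ analysis of \autoref{lem:squarefull}), the sum of the four cardinalities must be $\#\Fq^2 = q^2$. This identity serves as both a consistency check on the individual counts and, as the statement notes, the justification that no other collision numbers can occur. I expect the sum to telescope into a rational-function identity in $q$ and $r$ that can be confirmed by direct algebraic simplification, so the concluding step is routine once the four counts are established.
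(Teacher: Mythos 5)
Your overall strategy is the same as the paper's: translate the four similarity classes of \autoref{thm:countdecomp} into factorization types of $\tau(\fhat)\in\Fr[y]$ via \autoref{thm:minpolymclm}, enumerate the central polynomials $\fhat$ of each type, and count the fiber of degree-$r^2$ polynomials above each $\fhat$. However, there are two genuine problems. First, a concrete error: you assert that the non-squarefree case $b=0$ ``contributes to Cases 1 and $r+1$.'' By \autoref{lem:squarefull}, $x^{r^2}+ax^r$ has a unique decomposition when $a=0$ and is a (Frobenius) $2$-collision when $a\neq 0$; so the pair $(0,0)$ contributes the $+1$ in $c_1$, and the $q-1$ pairs $(a,0)$ with $a\neq 0$ contribute the $+(q-1)$ in $c_2$ --- nothing goes to $C_{r+1}$. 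Misallocating these $q-1$ pairs gives wrong values of $c_2$ and $c_{r+1}$, and your final consistency check $c_0+c_1+c_2+c_{r+1}=q^2$ would \emph{not} detect this, since moving mass between buckets preserves the total.

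Second, the substance of the proof --- the fiber sizes --- is exactly what you defer as ``the hard part,'' so the proposal remains a plan rather than an argument. The paper carries these out by counting roots and subspaces: in Case 0 each irreducible $\fhat$ of degree $q^2$ has $q^2-1$ nonzero roots and each indecomposable $f$ of degree $r^2$ accounts for $r^2-1$ of them, giving $(q^2-1)/(r^2-1)$ polynomials per $\fhat$ and $(r^2-r)/2$ choices of $\tau(\fhat)$; in Case 1 one counts the $q^2-q$ roots of $\ghat\circ\ghat$ that are not roots of $\ghat$ and divides by the $r^2-r$ such roots belonging to each $f$; in Case 2 one picks one of the $(q-1)/(r-1)$ degree-$r$ right components of each of $\ghat$ and $\hhat$ and forms $f=\lclc(g,h)$, with $\binom{r-1}{2}$ unordered pairs $\{\ghat,\hhat\}$; in Case $r+1$ one counts $2$-dimensional subspaces of $V_{\fhat}$ via ordered bases, $(q-1)(q-r)/\bigl((r^2-1)(r^2-r)\bigr)$ per $\fhat$, times $r-1$ choices of $c$. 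Your remark that the fiber sizes are ``governed by the number of degree-$r$ components, which is exactly the collision count $i$'' is not the mechanism used and would not by itself produce these formulas. Until these computations are supplied and the $b=0$ bookkeeping is corrected, the claimed values of $c_0,\dots,c_{r+1}$ are not established.
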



\begin{proof}\quad
  \begin{description}
  \item[Case 0:] 
The number of irreducible polynomials in $\Fr[y]$ of degree $2$ is
$(r^2-r)/2$ (see \cite{LidNie83}).  Each polynomial
$\fhat\in\cAq$ of degree $r^{2m}$ has $r^{2m}-1$ nonzero roots, and hence
has $(r^{2m}-1)/(r^2-1)$ components in $\Aq$ of degree~$r^2$.

\item[Case 1:] 
Each such $f$ arises as a right component of degree $r^2$ of an
$\fhat=\ghat\circ\ghat\in\cAq$, for $\ghat=x^{q}+cx\in\cAq$, which is not a
right component of $\fhat$.  The number of roots
of $\ghat\circ\ghat$ which are not roots of $\ghat$ is $q^{2}-q$.
Each of these roots belongs to a polynomial in $f\in\Aq$ of degree
$r^2$, and each such $f$ has $r^2-r$ such roots which belong to that
$f$  (the other roots belong to a right component of degree $r$).  Thus
there are $(q^{2}-q)/(r^2-r)$ polynomials in $\Aq$ of degree $r^2$
whose minimal central left composition is $\fhat$.  There
are $r-1$ polynomials $\fhat$ of this form so there are $(q^{2}-q)/r$
polynomials $f\in\Aq$ with a unique decomposition.

\item[Case 2:]
We consider the case of polynomials with 2-collisions, and thus
whose minimal central left compositions have the form
$\fhat=\ghat\circ\hhat$, for $\ghat,\hhat\in\cAq$, with
$\gcd(\ghat,\hhat)=1$.

Each such $f\in\Aq$ has minimal central left composition
$\fhat=\ghat\circ\hhat\in\cAq$, for $\ghat,\hhat\in\cAq$ of degree
${q}$, with $\gcd(\ghat,\hhat)=1$. Thus we can construct an $f$ with
the desired properties by choosing a root $\nu$ of $\ghat$ and a
root $\omega$ of $\hhat$ and finding the $f\in\Aq$ which has both
$\nu$ and $\omega$ as roots (this corresponds to finding the
$g,h\in\Aq$ to which $\nu,\omega$ belong respectively, and letting
$f=\lclc(g,h)$).  Each of $\ghat,\hhat$ has $({q}-1)/(r-1)$ right
components of degree $r$, so for each choice of $\ghat,\hhat$ we have
$({q}-1)^2/(r-1)^2$ polynomials $f\in\Aq$ with the desired properties.
There are $\binom{r-1}{2}=(r-1)(r-2)/2$ distinct pairs of
$\ghat,\hhat$ with nonzero constant coefficient.

\item[Case $\mathbold{r\mskip2mu +}$1:] In this case the minimal central left composition of $f$ is
$\fhat=x^{q}-cx$ for some $c\in\Fr^\times$.  Thus, $\tau(\fhat)=y-c\in\F_r[y]$
is the minimal polynomial of the Frobenius automorphism $\sigma_{q}$ on
$V_\fhat$, the $\Fr$-vector space of $\fhat$, and all subspaces of
$V_\fhat$ are invariant under $\sigma_{q}$.  Hence each subspace is
exactly the set of roots of a polynomial in $\Aq$.  The number right
components $h\in\Aq$ of $\fhat$ of degree $r^2$ is the number of
$2$-dimension subspaces of $V_\fhat$.  The number of linearly
independent pairs of vectors in $V_\fhat$ is $({q}-1)({q}-r)$.  This is
the number of all bases for all vector spaces of dimension $2$.
Each $2$-dimensional vector space has $(r^2-1)(r^2-r)$ bases.  Thus
$\fhat$ has
\[
\frac{({q}-1)({q}-r)}{r(r-1)^2(r+1)}
\]
right components of degree $r^2$.  There are $(r-1)$
polynomials $\fhat$ of the form $x^{q}-cx$ for $c\in\Fq^\times$.
\end{description}
\end{proof}

We note that the proof is constructive and shows how to (efficiently) generate
polynomials in $\Aq$ of degree $r^2$ with a prescribed number of collisions.
In each case, the number of collisions of an $f\in\Aq$ is determined by the
factorization of its minimal central left composition $\fhat$ in $\cAq$. Here
$\deg\tau(\fhat)\in\{1,2\}$, and we can enumerate all such $\fhat$ in each
class (irreducible linear, irreducible quadratic, perfect square, or product
of distinct linear factors).  We can decompose each such $\fhat$ using the
algorithms of \cite{Gie98} to generate polynomials with a prescribed number of
collisions.


We show now how to construct indecomposable additive
polynomials of prescribed degree, and count their number.  We also
show how to construct additive polynomials with a single, unique
complete decomposition and count the number of such polynomials.

The following theorem characterizes indecomposable polynomials of
degree $r^\ell$ in terms of their minimal central left compositions. 
This theorem allows us to get hold of degree $r$ right components from the
roots of $\tau(\fhat)$ in $\Fq$.

\begin{theorem}[{\citealp[Theorem 4.3]{Gie98}}]
\label{thm:indectoirr}
Let $\fhat\in\cAq$ have degree $q^{\ell}$, such that
$\tau(\fhat)\in\Fr[y]$ is irreducible (of degree $\ell$).  Then
every indecomposable right component $f\in\Aq$ of $\fhat$
has degree $r^\ell$.  Conversely, all $f\in\Aq$ which are
indecomposable of degree $r^\ell$ are such that
$\tau(\fhat)\in\Fr[y]$ is irreducible of degree $\ell$, where
$\fhat\in\cAq$ is the minimal central left composition of~$f$.
\end{theorem}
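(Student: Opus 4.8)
The plan is to convert both implications into statements about the $\Fr$-linear action of the Frobenius $\sigma_{q}$ on root spaces and then read them off \autoref{thm:minpolymclm}. For $f\in\Aq$ of degree $r^{n}$ write $V_{f}\subseteq\Frbar$ for its $n$-dimensional $\Fr$-space of roots, let $S$ be the matrix of $\sigma_{q}$ on $V_{f}$, and let $e\in\cAq$ be the minimal central left composition of $f$, so that $\Lambda:=\tau(e)\in\Fr[y]$ is the minimal polynomial of $S$ by \autoref{thm:minpolymclm}. By \autoref{lem:components_vs_subspaces} the right components of $f$ correspond to the $\sigma_{q}$-invariant $\Fr$-subspaces of $V_{f}$; hence $f$ is indecomposable precisely when $V_{f}$ has no invariant subspace other than $0$ and $V_{f}$, i.e.\ when $V_{f}$ is a \emph{simple} module over $\Fr[T]$ with $T$ acting as $\sigma_{q}$.

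First I would dispose of the non-squarefree case. If an indecomposable $f$ is not squarefree, then $f=g\circ x^{r^{t}}$ with $t\geq 1$ (as recalled in \autoref{lem:components_vs_subspaces}), and indecomposability forces $f=x^{r}$, whose minimal central left composition is $x^{q}$ with $\tau(x^{q})=y$ irreducible of degree $1$ --- consistent with the claim. Every other indecomposable $f$ is squarefree, so \autoref{thm:minpolymclm} applies. The key step is then purely linear-algebraic: a simple $\Fr[T]$-module is cyclic with irreducible minimal polynomial, and for a cyclic module the minimal and characteristic polynomials coincide; thus $\Lambda$ is irreducible and $\deg\Lambda=\dim_{\Fr}V_{f}=n$, while $\deg f=r^{n}$.

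The converse direction is now immediate: an indecomposable $f$ of degree $r^{\ell}$ has $V_{f}$ simple, so its minimal central left composition $e$ satisfies that $\tau(e)=\Lambda$ is irreducible of degree $\ell$; this $e$ is exactly the $\fhat$ in the statement. For the forward direction, suppose $\fhat\in\cAq$ has $\tau(\fhat)=:\pi$ irreducible of degree $\ell$ and let $f$ be an indecomposable right component, say $\fhat=g\circ f$. The central left compositions of $f$ form an ideal of the principal ideal domain $\cAq\cong\Fr[y]$ generated by its minimal one $e$; since $\fhat$ lies in this ideal we get $\tau(e)\mid\tau(\fhat)=\pi$, and as $\pi$ is irreducible and $\tau(e)$ is nonconstant (because $f$ is), we conclude $\tau(e)=\pi$ of degree $\ell$. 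Combining with the computation above, $\deg f=r^{\dim_{\Fr}V_{f}}=r^{\deg\tau(e)}=r^{\ell}$, as required.

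I expect the main obstacle to be keeping the module-theoretic dictionary exact: one must use \emph{simplicity} of $V_{f}$ (no proper nonzero invariant subspace), not mere indecomposability of $V_{f}$ as a module, since only simplicity forces the minimal polynomial to equal the characteristic polynomial and hence to have degree exactly $n$. The two supporting facts --- the squarefree reduction and the divisibility $\tau(e)\mid\tau(\fhat)$, which rests on $e$ generating the ideal of central left compositions --- must be stated cleanly, but neither is computationally hard.
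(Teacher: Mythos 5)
Your proof is correct. Note that the paper itself prints no argument for this statement---it is imported as Theorem~4.3 of \cite{Gie98}, and the proof sketch left (commented out) in the source works directly with a root $\nu$ of $\fhat$ and the $\Fr$-span of its Frobenius orbit $\nu,\nu^{q},\dots,\nu^{q^{\ell-1}}$. Your route reaches the same conclusion through the paper's own machinery instead: \autoref{lem:components_vs_subspaces} turns indecomposability of a squarefree $f$ into simplicity of $V_f$ as a module over $\Fr[\sigma_q]$, and \autoref{thm:minpolymclm} identifies $\tau$ of the minimal central left composition with the minimal polynomial of the Frobenius matrix, after which ``simple $\Leftrightarrow$ cyclic with irreducible minimal polynomial of degree equal to $\dim_{\Fr} V_f$'' does all the work. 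This buys two things the suppressed sketch lacks: the two directions become genuinely symmetric (the sketch's converse quietly invokes the forward direction for an $\fhat$ whose image under $\tau$ has not yet been shown irreducible), and the divisibility $\tau(e)\mid\tau(\fhat)$ is anchored to the ideal of central left compositions exactly as in the proof of \autoref{thm:minpolymclm}. The two points you flag---that simplicity, not mere indecomposability of the module, is what forces $\deg\Lambda=\dim_{\Fr}V_f$, and the separate disposal of the non-squarefree case $f=x^{r}$ with $\fhat=x^{q}$---are precisely where care is needed, and you handle both. The one residual imprecision, which you share with the paper, is that ``indecomposable'' must be read as ``indecomposable within $\Aq$,'' since that is what the subspace correspondence of \autoref{lem:components_vs_subspaces} actually controls; for $r=p$ the distinction disappears.
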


The following bound has been shown in \cite{odo99}.  Our
methods here provide a simple proof.   Let
\[
I_r(n)=\sum_{d\divs n} \mu(n/d) r^d
\]
be the number of monic irreducible polynomials in $\Fr[y]$ of degree
$n$ (see, e.g., \citet{LidNie83}, Theorem 3.25).

\begin{theorem}
Let $q$ be a power of $r$.  The number of monic indecomposable polynomials $f\in\Aq$ of degree $r^n$ is
\[
\frac{q^{n}-1}{r^n-1}I_r(n).
\]
\end{theorem}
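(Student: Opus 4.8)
The plan is to apply \autoref{thm:indectoirr} to reduce the count to a count, for each admissible minimal central left composition $\fhat$, of the degree-$r^{n}$ right components of $\fhat$, and then to recognize those components as lines in a vector space over the field $\Fr[y]/(\tau(\fhat))$.

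First I would partition the set of monic indecomposable $f\in\Aq$ of degree $r^{n}$ according to their minimal central left composition $\fhat\in\cAq$. By \autoref{thm:indectoirr}, such an $f$ is indecomposable of degree $r^{n}$ precisely when $\tau(\fhat)$ is irreducible of degree $n$; since the mclc of $f$ is unique, this gives a genuine partition, and the admissible $\fhat$ are exactly the monic preimages under the isomorphism $\tau\colon\cAq\to\Fr[y]$ of the monic irreducible polynomials of degree $n$. There are $I_r(n)$ of the latter. For $n\geq 2$ each of them has nonzero constant term, so the corresponding $\fhat$ is squarefree of degree $q^{n}=r^{dn}$, and its root space $V_{\fhat}$ has $\Fr$-dimension $dn$.

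Next I would fix such an $\fhat$ and count its degree-$r^{n}$ right components. By \autoref{thm:minpolymclm}, $\tau(\fhat)$ is the minimal polynomial of the Frobenius $\sigma_{q}$ acting on $V_{\fhat}$; since $\tau(\fhat)$ is irreducible of degree $n$, letting $y$ act as $\sigma_{q}$ turns $V_{\fhat}$ into a vector space over the field $\E=\Fr[y]/(\tau(\fhat))\iso\FF_{r^{n}}$, of $\E$-dimension $dn/n=d$. By \autoref{lem:components_vs_subspaces} the degree-$r^{n}$ right components of $\fhat$ correspond bijectively to the $\sigma_{q}$-invariant $\Fr$-subspaces of $V_{\fhat}$ of $\Fr$-dimension $n$; but $\sigma_{q}$-invariance is exactly $\E$-stability, and $\Fr$-dimension $n$ is $\E$-dimension $1$, so these subspaces are precisely the $\E$-lines in $V_{\fhat}\iso\E^{d}$, of which there are $((r^{n})^{d}-1)/(r^{n}-1)=(q^{n}-1)/(r^{n}-1)$. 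Each such component $h$ is itself indecomposable of degree $r^{n}$ with mclc $\fhat$, since $\sigma_{q}$ restricted to an $\E$-line has minimal polynomial $\tau(\fhat)$, so \autoref{thm:indectoirr} applies and confirms that the partition is consistent. Summing over the $I_r(n)$ admissible $\fhat$ yields the claimed total $\frac{q^{n}-1}{r^{n}-1}I_r(n)$.

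The crux — and the only step requiring real thought — is the middle one: recognizing $V_{\fhat}$ as an $\FF_{r^{n}}$-vector space through the action of $\sigma_{q}$, matching $\sigma_{q}$-invariant $n$-dimensional $\Fr$-subspaces with $\FF_{r^{n}}$-lines, and pinning down the $\E$-dimension as $d$ from $\dim_{\Fr}V_{\fhat}=dn$. The remaining ingredients (uniqueness of the mclc, the standard count of lines in $\E^{d}$, and the consistency check that each line gives back an indecomposable of degree $r^{n}$) are routine.
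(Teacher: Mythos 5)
Your proposal is correct and follows essentially the same route as the paper: both use \autoref{thm:indectoirr} to reduce the count to the $I_r(n)$ central polynomials $\fhat$ with $\tau(\fhat)$ irreducible of degree $n$, and then multiply by the number $(q^{n}-1)/(r^{n}-1)$ of degree-$r^{n}$ right components of each such $\fhat$. The only difference is that the paper simply asserts this last count, whereas you justify it by making $V_{\fhat}$ into a $d$-dimensional vector space over $\E=\Fr[y]/(\tau(\fhat))\iso\FF_{r^{n}}$ and counting $\E$-lines — a clean substitute for the root-counting argument the paper uses elsewhere (each $\E$-line consists of $0$ together with $r^{n}-1$ of the $q^{n}-1$ nonzero roots), and your explicit checks that the $\fhat$ give a genuine partition and that every such line yields an indecomposable component are welcome details the paper omits.
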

\begin{proof}
By Theorem \ref{thm:indectoirr} all such polynomials are right components of
polynomials $\fhat\in\cAq$ of degree $q^{n}$, where
$\tau(\fhat)\in\Fr[y]$ is irreducible (of degree $n$).  Any such
$\fhat$ has $(q^{n}-1)/(r^n-1)$ indecomposable right components in $\Aq$, all
of degree $r^n$.  There are $I_r(n)$ irreducible polynomials of
degree $n$ in $\Fr[y]$.
\end{proof}

Note that this implies there are (slightly) more indecomposable
additive polynomials of degree $r^n$ in $\Aq$ than irreducible
polynomials of degree $n$ in $\Fq[y]$.

The above theorem also yields a reduction from the problem of finding
indecomposable polynomials in $\Aq$ of prescribed degree to that of
decomposing polynomials in $\Aq$. A fast randomized algorithm for
decomposing additive polynomials is shown in \cite{Gie98}, which
requires a number of operations bounded above by $(n+m+\log
r)^{O(1)}$.  Thus, we can just choose a random polynomial in $\Aq$ of
prescribed degree and check if it is irreducible, with a high
expectation of success.  A somewhat slower polynomial-time reduction
from decomposing additive polynomials in $\Aq$ to factoring in
$\Fr[y]$ is also given in \cite{Gie98}.  This suggests the interesting
question as to whether one can find indecomposable polynomials in
$\Aq$ of prescribed degree $n$ in deterministic polynomial-time,
assuming the ERH (\`a la \citet{AdlLen86}).


We finish this section by establishing connections to the counts of \cite{blu04a} and \cite{gat09d}.

We have a prime $p$, integers $d$, $e$, and $m$ with $d$ dividing $e$,
$r=p^d$, $q=p^e$, set $\varphi_{r,m}= (r^m-1)/(r-1)$ and for $a,b \in \FF_q$
and $0 \leq i \leq \varphi_{r,m}$
\begin{equation*}
  \Psi_{m}^{(a,b)} = x^{\varphi_{r,m}}+ax+b.
\end{equation*}
This yields an equivalent description of $C_{q,r,m,i}$ by
\autoref{pro:equivalent} as
\begin{equation}
   \label{eq:3}
C_{q,r,m,i} =\{(a,b) \in \FF_q^2 \colon \text{$\Psi_{m}^{(a,b)}$ has exactly $i$
  roots in $\FF_q$}\}.  
\end{equation}
\autoref{sec:proj-polyn} says that
\begin{equation*}
  C_{q,r,m,i} \neq \emptyset \implies i \in S_{q,r,m}
\end{equation*}
and $S_{q,r,m}$ is determined in \autoref{thm:S}.  Furthermore, let
\begin{align*}
  C_{q,r,m,i}^{(1)} & = \{(a,b) \in C_{q,r,m,i} \colon b \neq 0\}, \\
  C_{q,r,m,i}^{(2)} & = \{(a,b) \in C_{q,r,m,i} \colon ab \neq 0\},
\end{align*}
and $c_{q,r,m,i}^{(j)} = \# C_{q,r,m,i}^{(j)}$ for $j =1,2$.  Leaving out the
indices, we have $C^{(2)} \subseteq C^{(1)} \subseteq C$.  The set $C^{(1)}$
occurs naturally in general decompositions (\autoref{pro:arbitr-coll}
\ref{item:iii} for $r=p$), and $C^{(2)}$ is the subject of \cite{blu04a}.  For
an integer $m \geq 1$, let
\begin{equation*}
  \gamma_{q,r,m} =   \gcd (\varphi_{r,m}, q-1).
\end{equation*}

\begin{proposition} \label{pro:C}
  We fix $q,r,m$ as above and drop them from the notation of $C_{q,r,m,i}$ and
  $c_{q,r,m,i}$.
  \begin{enumerate}
  \item\label{item:Ci} We have $C_i = C_i^{(1)}$ for all $i \notin \{1,
    \gamma_{m-1}+1\}$, and
    \begin{align*}
      C_1 \setminus C_1^{(1)} & = \{(a,0) \colon (-a)^{(q-1)/\gamma_{q,r,m-1}}
      \neq 1\}, \\
      C_{\gamma_{m-1}+1} \setminus C_{\gamma_{m-1}+1}^{(1)} & =  \{(a,0)
      \colon (-a)^{(q-1)/\gamma_{q,r,m-1}} = 1\} \\
      c_1 & =c_1^{(1)} + (q-1)(1-\gamma_{q,r,m-1}^{-1})+1, \\
      c_{\gamma_{m-1}+1}& = c_{\gamma_{m-1}+1}^{(1)} + (q-1)\gamma_{q,r,m-1}^{-1}.      
    \end{align*}
  \item\label{item:Cii} We have $C_i^{(1)} = C_i^{(2)}$ for all $i \notin \{0,
    \gamma_{m}\}$, and 
    \begin{align*}
      C_0^{(1)} \setminus C_0^{(2)} & = \{(0,b) \colon
      (-b)^{(q-1)/\gamma_{q,r,m}}\neq 1\}, \\
      C_{\gamma_m}^{(1)} \setminus C_{\gamma_m}^{(2)} & = \{(0,b) \colon
      (-b)^{(q-1)/\gamma_{q,r,m}} = 1\}, \\
      c_0^{(1)} & = c_0^{(2)} + (q-1)(1-\gamma_{q,r,m}^{-1}) \\
      c_{\gamma_m}^{(1)} & = c_{\gamma_m}^{(2)} + (q-1)\gamma_{q,r,m}^{-1}.
    \end{align*}
  \end{enumerate}
\end{proposition}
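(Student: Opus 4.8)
The plan is to analyze how the parameter $b$ (and then $a$) specializes to zero, relating the root count of $\Psi_m^{(a,b)}$ in the degenerate cases to the root count of the associated polynomials with one fewer free parameter. The key structural observation is that $\Psi_m^{(a,0)} = x^{\varphi_{r,m}} + ax = x(x^{\varphi_{r,m}-1}+a)$, so $x=0$ is always a root, and the remaining roots are the roots of $x^{\varphi_{r,m}-1} + a$. First I would handle part \ref{item:Ci}. Since $\varphi_{r,m} - 1 = r \cdot \varphi_{r,m-1}$ and $x \mapsto x^r$ is a bijection (the Frobenius-related $r$-power map) on $\FF_q$, counting roots of $x^{r\varphi_{r,m-1}} + a = 0$ in $\FF_q$ reduces to counting roots of $x^{\varphi_{r,m-1}} + a = 0$. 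The number of solutions of $x^{\varphi_{r,m-1}} = -a$ in $\FF_q^\times$ is either $\gamma_{q,r,m-1} = \gcd(\varphi_{r,m-1}, q-1)$ (when $-a$ is a $\varphi_{r,m-1}$-th power, equivalently $(-a)^{(q-1)/\gamma_{q,r,m-1}} = 1$) or $0$ otherwise, by the standard theory of the cyclic group $\FF_q^\times$.

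From this the partition of the $(a,0)$ line follows directly. When $(-a)^{(q-1)/\gamma_{q,r,m-1}} \neq 1$, the only root is $x=0$, so $\Psi_m^{(a,0)}$ has exactly one root and $(a,0) \in C_1 \setminus C_1^{(1)}$; when $(-a)^{(q-1)/\gamma_{q,r,m-1}} = 1$, there are $\gamma_{q,r,m-1}$ nonzero roots plus the root at $0$, giving $\gamma_{m-1}+1$ roots total, so $(a,0) \in C_{\gamma_{m-1}+1} \setminus C_{\gamma_{m-1}+1}^{(1)}$. The counts then come from enumerating $a \in \FF_q$: the number of $a$ with $-a=0$ or $(-a)^{(q-1)/\gamma_{q,r,m-1}} \neq 1$ contributes to $c_1 - c_1^{(1)}$, and the number with the power equal to $1$ contributes to $c_{\gamma_{m-1}+1} - c_{\gamma_{m-1}+1}^{(1)}$. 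Here I must be careful to account separately for $a=0$: the value $a=0$ yields $\Psi_m^{(0,0)} = x^{\varphi_{r,m}}$, which has the single root $0$, and this explains the extra $+1$ in the formula for $c_1$. The remaining nonzero $a$ split as $(q-1)(1 - \gamma_{q,r,m-1}^{-1})$ failing the power condition and $(q-1)\gamma_{q,r,m-1}^{-1}$ satisfying it, since exactly a $\gamma_{q,r,m-1}^{-1}$ fraction of $\FF_q^\times$ consists of $\varphi_{r,m-1}$-th powers.

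For part \ref{item:Cii}, the situation is the specialization $a=0$ with $b$ free, where $\Psi_m^{(0,b)} = x^{\varphi_{r,m}} + b$, so I must count roots of $x^{\varphi_{r,m}} = -b$ in $\FF_q$. The same cyclic-group argument applies with $\gamma_{q,r,m} = \gcd(\varphi_{r,m}, q-1)$ in place of $\gamma_{q,r,m-1}$: there are $\gamma_{q,r,m}$ roots when $(-b)^{(q-1)/\gamma_{q,r,m}} = 1$ and none otherwise (note $b \neq 0$ throughout, since $C^{(1)}$ already requires $b \neq 0$). Thus $(0,b)$ with the power condition failing lies in $C_0^{(1)} \setminus C_0^{(2)}$, and with it holding lies in $C_{\gamma_m}^{(1)} \setminus C_{\gamma_m}^{(2)}$, and the counts follow by the same fraction argument over $b \in \FF_q^\times$.

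The main obstacle I anticipate is bookkeeping rather than conceptual: correctly tracking the special role of the origin $(0,0)$ (which sits in $C_1$ and accounts for the $+1$) and ensuring the two exceptional cases $i \in \{1, \gamma_{m-1}+1\}$ in part \ref{item:Ci} and $i \in \{0, \gamma_m\}$ in part \ref{item:Cii} are precisely the ones where the boundary contributes. A secondary subtlety is verifying that these exceptional indices are genuinely distinct from the generic ones and that no other $i$ receives a contribution from the coordinate axes, which follows because off the axes (where $ab \neq 0$) the root behavior is governed by \autoref{pro:equivalent} and \autoref{thm:S} rather than by the simple binomial structure exploited here.
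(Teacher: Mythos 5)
Your proposal is correct and follows essentially the same route as the paper's proof: factor out the root $x=0$ via $\Psi_m^{(a,0)}=x(x^{r\varphi_{r,m-1}}+a)$, use the bijectivity of the $r$th power map to reduce to $x^{\varphi_{r,m-1}}=-a$, and apply the standard cyclic-group count (which the paper spells out via the power maps $\pi_\gamma$ and $\pi_\varphi$) to get $\gamma_{q,r,m-1}$ or $0$ nonzero roots, with the $(0,0)$ point handled separately for the $+1$; part two is the same argument with $\gamma_{q,r,m}$. No gaps.
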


\begin{proof}
  \begin{enumerate}
  \item Let $i\in S_{q,r,m}$ and $(a,0) \in C_i \setminus C_i^{(1)}$ be
    arbitrary.  Then $\Psi_{m}^{(a,0)} = x^{\varphi_{r,m}}+ax =
    x(x^{r\varphi_{r,m-1}}+a)$.  Now $0$ is a root, and for $a=0$ it is the
    only one.  This places $(0,0)$ into $C_1 \setminus C_1^{(1)}$, and we may
    now assume $a \neq 0$.  Now let $t_0$ be a nonzero root of
    $\Psi_{m}^{(a,0)}$ and $t=t_0^r$.  Then $t^{\varphi_{r,m-1}} = -a$.

    Dropping the indices, we have $\varphi = \gamma \cdot (\varphi/\gamma)$
    from (\ref{eq:3}).  The power map $\pi_\gamma \colon w \mapsto w^\gamma$
    on $\FF_q^\times$ maps $\gamma$ elements to one, since $\gamma \mid (q-1)$.  Thus
    $\im \pi_\gamma$ is a group of order $(q-1)/\gamma$, and $\gcd
    (\varphi/\gamma, (q-1)/\gamma) = 1$.  Thus the $(\varphi/\gamma)$th power
    acts bijectively on this group, and $\im \pi_\gamma=\im \pi_\varphi$.  If there is one $t$ with $t^\varphi =
    -a$, then there are exactly $\gamma$ many.  Furthermore, we have
    \begin{equation*}
      -a \in \im \pi_\varphi  = \im \pi_\gamma
      \Longleftrightarrow (-a)^{(q-1)/\gamma} =1.
    \end{equation*}
    Together with the fact that the $r$th power acts bijectively on $\FF_q$,
    this shows that if $\Psi_{m}^{(a,0)}$ has at least one nonzero root, then
    it has exactly $\gamma$ roots.  Adding in the root $0$ shows the claims in
    \ref{item:Ci}.
\item Let $(0,b)\in \FF_q^2$ with $b\neq 0$ be an arbitrary element of
  $C^{(1)}\setminus C^{(2)}$.  Then $\Psi_{m}^{(0,b)} = x^{\varphi_{r,m}}+b$.  Now
  $0$ is not a root, but otherwise the argument for \ref{item:Ci} applies
  mutatis mutandis.
  \end{enumerate}
\end{proof}

%

We note that Theorem \ref{thm:colcounts} is also counting the number
of possible solutions to the equations $y^{r+1}+ay+b$, as in Bluher's
(2004) work.  For $m=2$, \eqref{eq:14} is equivalent to $h_{0}^{r+1}+ah_{0}+b=0$, so we are
counting the number of $h_{0}\in\Fq, q = r^{d}$ satisfying $y^{r+1}+ay+b=0$.  The
comparison with Bluher's work is interesting because she does not
consider the case $a=0$ or $b=0$ and because her work has multiple
cases depending on whether $d$ is even or odd and whether $m$ is even
or odd, whereas our counts have no such special cases.  

The result in the (relatively straightforward) case $a=0$ is consistent with
the more general Lemma 5.9 of \cite{gat08c}, where $q$ is not required to be a
power of $r$, but merely of $p$.


We now state as a corollary a result equivalent to that of
\cite{blu04a} (at least over $\Fq$, when $q=r^d$).

\begin{corollary}
\label{cor:bluher}
Let $r$ be a prime power, $d$ a positive integer and $q=r^{d}$.  Then 
\[
C_{q,r,2,i}^{(2)}=\{ (a,b)\in {\Fq^\times}^{2} \colon x^{r^2}+ax^r+bx\quad \text{has an $i$-collision} \},
\]
$C_{q,r,2,i}^{(2)}=\emptyset$ for $i\notin\{0,1,2,r+1\}$, and the following holds:
\begin{itemize}
\item[(i)] If $d$ is even, then $[c_0^{(2)},c_1^{(2)},c_2^{(2)},c_{r+1}^{(2)}]=$
\[
\left[
\frac{r ({q}-1)^2}{2(r+1)},
\frac{{q}({q}-1)}{r},
\frac{({q}-1)^2(r-2)}{2(r-1)},
\frac{({q}-1)({q}-r^2)}{r(r^2-1)}
\right].
\]
\item[(ii)] If $r$ is odd and $d$ is odd, then $[c_0^{(2)},c_1^{(2)},c_2^{(2)},c_{r+1}^{(2)}]=$
\begin{multline}
\left[
\frac{({qr}-1)({q}-1)}{2(r+1)},
\frac{{q}({q}-1)}{r},  \frac{({q}-1)({qr}-2{q}-2r+3)}{2(r-1)},
\frac{({q}-r)({q}-1)}{r(r^2-1)}
\right].
\end{multline}
\item[(iii)] If $r$ is even and $d$ is odd, then
$[c_0^{(2)},c_1^{(2)},c_2^{(2)},c_{r+1}^{(2)}]=$
\begin{multline}
\left[
\frac{r(q^{2}-1)}{2(r+1)},
\frac{({q}-1)({q}-r)}{r}, \frac{({q}-1)^2(r-2)}{2(r-1)},
\frac{({q}-r)({q}-1)}{r(r^2-1)}
\right].
\end{multline}
\end{itemize}
\end{corollary}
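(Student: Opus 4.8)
The plan is to derive the restricted counts $c_i^{(2)}$ (over $(a,b)$ with $ab\neq 0$) from the unrestricted counts $c_i$ of \autoref{thm:colcounts} (over all $(a,b)\in\Fq^2$) by peeling off, in two stages, first the contribution of the locus $b=0$ and then that of $a=0$. These are exactly the corrections recorded in \autoref{pro:C}, so the whole argument reduces to specializing that proposition to $m=2$ and evaluating two gcd's. Schematically, $c_i=c_i^{(1)}+(\text{$b=0$ term})$ and $c_i^{(1)}=c_i^{(2)}+(\text{$a=0$ term})$, and each $c_i^{(2)}$ is pinned down once $\gamma_{q,r,1}$ and $\gamma_{q,r,2}$ are known.

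First I would pass from $c_i$ to $c_i^{(1)}$ using part~\ref{item:Ci} of \autoref{pro:C}. With $m=2$ we have $\gamma_{q,r,m-1}=\gamma_{q,r,1}=\gcd(\varphi_{r,1},q-1)=\gcd(1,q-1)=1$, so $\gamma_{m-1}+1=2$ and only $c_1$ and $c_2$ are touched. Because $1-\gamma_{q,r,1}^{-1}=0$, the correction terms collapse to $c_1^{(1)}=c_1-1$ and $c_2^{(1)}=c_2-(q-1)$, while $c_0^{(1)}=c_0$ and $c_{r+1}^{(1)}=c_{r+1}$ are unchanged. As a check, the total removed is $1+(q-1)=q=\#\{(a,0)\}$, so $\sum_i c_i^{(1)}=q^2-q$.

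The crux is the second stage, $c_i^{(1)}\mapsto c_i^{(2)}$ via part~\ref{item:Cii} of \autoref{pro:C}, which hinges on $\gamma_m=\gamma_{q,r,2}=\gcd(r+1,q-1)$. The key elementary computation is $r\equiv -1\pmod{r+1}$, whence $q-1=r^d-1\equiv(-1)^d-1\pmod{r+1}$. This yields exactly the three-way split of the statement: if $d$ is even then $(r+1)\mid(q-1)$ and $\gamma_m=r+1$; if $d$ is odd then $q-1\equiv -2$, giving $\gamma_m=\gcd(r+1,2)$, which is $2$ for $r$ odd and $1$ for $r$ even. The main source of bookkeeping trouble, as I see it, is that in each of the three cases $\gamma_m$ coincides with one of the four collision sizes ($r+1$, $2$, or $1$), so the term $(q-1)\gamma_m^{-1}$ is subtracted from a \emph{different} one of the $c_i^{(1)}$ in each case --- namely $c_{r+1}$, $c_2$, or $c_1$ --- while $(q-1)(1-\gamma_m^{-1})$ is always subtracted from $c_0^{(1)}$. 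This is precisely why the three displayed answer tuples look so dissimilar despite arising from one uniform mechanism, and it forces the final arithmetic to be carried out separately three times rather than in a single formula.

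Finally I would substitute $\gamma_m\in\{r+1,2,1\}$ and clear denominators over the common factor $r(r^2-1)$. For $d$ even this turns $c_0^{(1)}$ and $c_{r+1}^{(1)}$ into $\tfrac{r(q-1)^2}{2(r+1)}$ and $\tfrac{(q-1)(q-r^2)}{r(r^2-1)}$ while leaving $c_1,c_2$ at their $(1)$-level values, matching (i); the two odd cases are handled identically, with the correction instead reducing $c_2$ (for $r$ odd, case (ii)) or $c_1$ (for $r$ even, case (iii)). Throughout, a convenient global check is that $c_0^{(2)}+c_1^{(2)}+c_2^{(2)}+c_{r+1}^{(2)}$ must equal $(q-1)^2=\#(\Fq^\times)^2$, which holds because the two stages together remove exactly the $q+(q-1)=2q-1$ pairs with $ab=0$; this simultaneously confirms the case analysis and catches any slip in the simplifications.
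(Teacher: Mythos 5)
Your derivation is correct and is exactly the route the paper intends: the corollary is stated immediately after Proposition~\ref{pro:C} precisely so that it follows by specializing that proposition to $m=2$ (so $\gamma_{q,r,1}=1$ and $\gamma_{q,r,2}=\gcd(r+1,q-1)\in\{r+1,2,1\}$ according to the parities of $d$ and $r$) and applying the two correction stages to the counts of Theorem~\ref{thm:colcounts}. Your arithmetic in all three cases checks out, and the $(q-1)^2$ sum check is a nice confirmation.
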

We note that each of these counts is $q-1$ times the corresponding count of
\citet[Theorem 5.6]{blu04a}, which projects down to a single parameter family.
We also note that the constructive nature of our proofs allows us to build
polynomials prescribed to be in any of these decomposition classes.  This
follows in the same manner as in the degree $r^2$ case (see the discussion
following Theorem 5.2).  We generate elements of $\cAq$ with the desired
factorization pattern (which determines the number of collisions) and
decompose these over $\Aq$ using the algorithms of \cite{Gie98}.

\section{General compositions of degree $r^{2}$}
\label{sec:general}

The previous sections provide a good
understanding of composition
collisions for additive polynomials.  We now move on to general
polynomials.   This section provides some explicit non-additive collisions.  

\begin{example}
We consider $\FF_{27} = \FF_3[y]/(m)$, with $m = y^3-y+1$, take $r=p=3$,
$u=1$, and let
\begin{equation*}
T = \{-1, -y^2, -y^2 -y-1, -y^2+y-1\}
\end{equation*}
consist of the $r+1$ roots of $t^{r+1}-ut+u$.  We obtain for
\begin{equation*}
f = x^9 + x^6 - x^5 + x^3 + x^2 + x
\end{equation*}
the following $4$-collision of monic original polynomials:
\begin{align*}
f & = (x^3 -x^2 + x ) \circ (x^3 - x^2 + x)\\
& = (x^3 + (y^2 + y -1)x^2 - (y +1)x) 
\circ (
x^3 - y^2x^2 + (y^2 - y)x) \\
& = (x^3 + (y^2 - y -1)x^2 -yx) 
 \circ (
x^3 - (y^2 + y + 1)x^2 + (y^2 -1)x) \\
& = (x^3 + (y^2 + 1)x^2 + (-y + 1)x)
\circ (
x^3 - (y^2 - y +1)x^2 + (y^2 + y)x).
\end{align*}
\end{example}

For any $f = \sum f_{i} x^{i} \in \Fq[x]$, we call $\deg_{2} f =
\deg(f- \lc(f)x^{\deg f})$ the \emph{second-degree} of $f$, with
$\deg_{2} f = -\infty$ for monomials and zero. Furthermore, $ f = g +
O(x^{k})$ with a polynomial $g \in \Fq [x]$ and an integer $k$, if
$\deg (f-g) \leq k$.

\begin{theorem}
\label{thm:nonadd}
Let $q$ and $r$ be powers of $p$, $\varepsilon \in \{0,1\}$, $u,
s\in \FF_{q}^{\times}$, $t \in T = \{t \in \FF_{q} \colon t^{r+1} -\varepsilon
ut + u = 0\}$, $\ell$ a positive divisor of $r-1$, $m = (r-1)/\ell$, and
\begin{align*}
f = F(\varepsilon, u, \ell, s) & = x (x^{\ell(r+1)}-\varepsilon u s^{r}x^{\ell} + us^{r+1})^{m}, \\
g = G(u, \ell, s, t) & = x (x^{\ell}-us^{r}t^{-1})^{m}, \\
h = H(\ell, s, t) & = x (x^{\ell}-st)^{m},
\end{align*}
all in $\FF_{q}[x]$.  Then
\begin{equation*}
f = g \circ h,
\end{equation*}
and $f$ is a $\# T$-collision.
\end{theorem}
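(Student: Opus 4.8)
The statement has two logically separate parts: the composition identity $f = g \circ h$, and the claim that the induced family of decompositions is a $\#T$-collision. The plan is to treat the composition as a direct verification in which the defining equation of $T$ enters at exactly one coefficient comparison, and then to settle the collision claim by a one-line distinctness argument.

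First I would record the degrees: $\deg h = \deg g = 1 + \ell m = 1 + (r-1) = r$, so $\deg(g \circ h) = r^2 = \deg f$, consistent throughout. To compute the composition, abbreviate $P = x^{\ell}$, so that $h = x(P - st)^m$ and $g(y) = y(y^{\ell} - u s^r t^{-1})^m$. Substituting gives
\[
g \circ h = x(P - st)^m \bigl(h^{\ell} - u s^r t^{-1}\bigr)^m = x\bigl[(P - st)(h^{\ell} - u s^r t^{-1})\bigr]^m,
\]
where the last step is merely $(AB)^m = A^m B^m$. This is the point that makes the argument clean: both $f = x(P^{r+1} - \varepsilon u s^r P + u s^{r+1})^m$ and $g \circ h$ are literally $x$ times an $m$-th power, so no $m$-th-root ambiguity arises and it suffices to match the bracketed base against $P^{r+1} - \varepsilon u s^r P + u s^{r+1}$.

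The core computation uses $h^{\ell} = P(P - st)^{r-1}$ (from $m\ell = r-1$) together with the Frobenius identity $(P - st)^r = P^r - s^r t^r$, valid since $r$ is a power of $p$. These collapse $(P - st)\,h^{\ell}$ to $P^{r+1} - s^r t^r P$, and after subtracting $u s^r t^{-1}(P - st)$ I obtain
\[
P^{r+1} - \bigl(s^r t^r + u s^r t^{-1}\bigr)P + u s^{r+1}.
\]
The constant term $u s^{r+1}$ already matches; the linear coefficient equals $\varepsilon u s^r$ precisely when $t^{r+1} - \varepsilon u t + u = 0$, that is, exactly when $t \in T$. (Note $0 \notin T$ because $u \neq 0$, so $t^{-1}$ is legitimate.) This single coefficient comparison is the heart of the proof, and it is where the projective-polynomial condition is consumed; I expect this Frobenius simplification and the equivalence with membership in $T$ to be the only places requiring care, rather than any deep obstacle.

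For the collision claim I would fix $\varepsilon, u, \ell, s$ and observe that $f = F(\varepsilon, u, \ell, s)$ does not depend on $t$, whereas each $t \in T$ produces a pair $(g,h) = (G(u,\ell,s,t), H(\ell,s,t))$ with $g \circ h = f$ and the common value $\deg g = r$. Distinctness is immediate: since $s \neq 0$, distinct $t$ give distinct constant terms $-st$ in $h = x(x^{\ell} - st)^m$, hence distinct $h$ and distinct pairs. Therefore these $\#T$ decompositions constitute a $\#T$-collision, as claimed.
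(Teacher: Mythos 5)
Your proposal is correct and follows essentially the same route as the paper: both expand $g\circ h$ as $x$ times a single $m$-th power, apply the Frobenius identity $(x^{\ell}-st)^{r}=x^{\ell r}-s^{r}t^{r}$, and consume the condition $t^{r+1}-\varepsilon ut+u=0$ at the coefficient of $x^{\ell}$ in the base. The only cosmetic difference is in the distinctness step, where the paper reads off the explicit coefficients $h_{r-\ell}=-mst$ and $g_{r-\ell}=-mus^{r}t^{-1}$ (nonzero since $p\nmid m$), while you appeal to the injectivity of $t\mapsto x(x^{\ell}-st)^{m}$; both are immediate.
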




\begin{proof}
From $u\neq 0$ follows $t \neq 0$, so 
that $g$ is well-defined. We find
  \begin{align*}
    g \circ h & = x (x^{\ell} -st)^{m}(x^{\ell}(x^{\ell}-st)^{r-1}-us^{r}t^{-1})^{m} \\
& = x ((x^{\ell}-st)^{r}x^{\ell}-(x^{\ell}-st)us^{r}t^{-1})^{m} \\
& = x (x^{\ell r + \ell} - s^{r} t^{r} x^{\ell} - u s^{r} t^{-1}x^{\ell} + u s^{r+1})^{m} \\
& = x(x^{\ell(r+1)} -s^{r}(t^{r}+ut^{-1})x^{\ell}+us^{r+1})^{m} \\
& = x (x^{\ell(r+1)}-\varepsilon u s^{r}x^{\ell} + us^{r+1})^{m} =f.
  \end{align*}
Note that $f$ is
independent of $t$.  We have different coefficients
\begin{align*}
g_{r-\ell} & = -mus^{r}t^{-1} \neq 0, \\
h_{r-\ell} & = -mst \neq 0,
\end{align*}
for different values of $t$, and therefore $\# T$ pairwise distinct decompositions of $f$.
\end{proof}

The polynomials described are additive if $\ell = r-1$.  If $\ell < r-1$, $r-\ell$ is not a
power of $r$ and $g_{r-\ell} \neq 0$, so that $g$ and $f$ are not additive.

If a polynomial $f \in \FF_q[x]$ is monic original, then so is $f_{(w)} = (x-f(w)) \circ f \circ (x+w)$ for
all $w\in
\FF_q$.  Every decomposition of $f$ induces a decomposition of
$f_{(w)}$ as specified below, and all $f_{(w)}$ have the same number of decompositions as $f_{(0)} = f$.

\begin{corollary}
\label{cor:transforms}
  We use the notation of \autoref{thm:nonadd}, an additional parameter
$w \in \FF_q$ and set
\begin{equation*}
\begin{split}
f_{(w)} = F(\varepsilon, u, \ell, s)_{(w)} & = (x-f(w)) \circ F(\varepsilon, u, \ell, s) \circ (x+w), \\
g_{(w)} = G(u, \ell, s, t)_{(w)} & = (x-f(w)) \circ G( u, \ell, s, t) \circ (x+h(w)), \\
h_{(w)} = H(\ell, s, t)_{(w)} & = (x- h(w)) \circ H(\ell, s, t) \circ (x+w).
\end{split}
\end{equation*}
Then $f_{(w)}=g_{(w)} \circ h_{(w)}$,
all three polynomials are monic original,
and \linebreak[4]$\{(g_{(w)}, h_{(w)}) \colon t \in
T\}$ is a $\# T$-collision.  
\end{corollary}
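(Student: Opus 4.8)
The plan is to reduce Corollary~\ref{cor:transforms} directly to \autoref{thm:nonadd} by exploiting the elementary fact that conjugation by the linear maps $x+w$ and $x-c$ preserves composition. First I would verify the composition identity $f_{(w)} = g_{(w)} \circ h_{(w)}$. Writing out the definitions and using associativity of composition, the middle linear factors should telescope: in $g_{(w)} \circ h_{(w)} = \bigl((x-f(w)) \circ g \circ (x+h(w))\bigr) \circ \bigl((x-h(w)) \circ h \circ (x+w)\bigr)$, the inner pair $(x+h(w)) \circ (x-h(w))$ collapses to the identity, leaving $(x-f(w)) \circ (g \circ h) \circ (x+w)$, which equals $(x-f(w)) \circ f \circ (x+w) = f_{(w)}$ by \autoref{thm:nonadd}. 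This is the routine but essential core and requires only that $h(w)$ be the value used consistently in both $g_{(w)}$ and $h_{(w)}$, which the definitions arrange.

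Next I would check that all three polynomials are monic original. Monicity is immediate since pre- and post-composition with monic linear polynomials $x \pm c$ does not change the leading coefficient. For originality I would evaluate at $0$: for instance $h_{(w)}(0) = h(w) - h(w) = 0$ by construction, and similarly $g_{(w)}(0) = f(w) - f(w) = 0$ once one observes $g(h(w)) = (g\circ h)(w) = f(w)$; the case $f_{(w)}(0)=0$ is the remark already stated before the corollary. These are one-line evaluations using the defining shifts.

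The substantive step is showing that the map $t \mapsto (g_{(w)}, h_{(w)})$ remains injective on $T$, so that we again obtain a $\#T$-collision rather than a degenerate one. Here I would argue that conjugation by fixed linear polynomials is a bijection on decompositions: the assignment $(g,h) \mapsto (g_{(w)}, h_{(w)})$ has an explicit inverse (conjugate back by $x-w$ and $x+h(w)$), hence distinct pairs $(g,h)$ for distinct $t$ yield distinct pairs $(g_{(w)}, h_{(w)})$. I expect this to be the main obstacle only in the bookkeeping sense: one must confirm that the \emph{same} shift $h(w)$ is well-defined independent of the decomposition, i.e.\ that $h(w)$ depends on $t$. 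Examining $h = x(x^{\ell}-st)^m$, the value $h(w)$ does depend on $t$, so the inner shift differs across the collision; nonetheless injectivity survives because \autoref{thm:nonadd} already gives $\#T$ distinct $(g,h)$, and the linear conjugation acts injectively pair-by-pair. I would therefore conclude by invoking the injectivity just established together with $\#T$ distinct source pairs from \autoref{thm:nonadd}, giving a $\#T$-collision of monic original polynomials and completing the proof.
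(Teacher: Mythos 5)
Your proposal is correct and follows exactly the argument the paper intends: the paper offers no separate proof of this corollary, deriving it from the preceding remark that conjugation by linear polynomials preserves decompositions, which is precisely your telescoping of $(x+h(w))\circ(x-h(w))$, the evaluation checks for monic original, and the invertibility of the conjugation (note that $h(w)=-h_{(w)}(-w)$ recovers the shift from $h_{(w)}$ alone, which makes your injectivity step fully rigorous). Nothing is missing.
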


Among all $F(\varepsilon, u, \ell, s)_{(w)}$, the $F(\varepsilon, u, \ell,
s)_{(0)}$ is characterized by the vanishing of the coefficient of $x^{r^2-\ell r-\ell -1}$.

\begin{proposition}
\label{lem:unique1}
  Let $q$ and $r$ be powers of $p$.  Let $\varepsilon$, $u$, $\ell$, $s$, $t$ 
  and $\varepsilon^{*}$, $u^{*}$, $\ell^{*}$, $s^{*}$, $t^{*}$ satisfy the conditions
  of \autoref{thm:nonadd}, $w, w^{*} \in \Fq$, $f = F(\varepsilon, u, \ell,
  s)_{(w)}$, and $ f^{*} =  F
  (\varepsilon^{*}, u^{*}, \ell^{*}, s^{*})_{(w^{*})}$. The following holds:
\begin{enumerate}
\item\label{item:i} If $f=f^{*}$, then $\varepsilon=\varepsilon^{*}$ and $\ell=\ell^{*}$.
\item\label{item:ii} If $\varepsilon=0$ and $\ell = r- 1$,then $f= F(0, -1, r-1,
       st)_{(0)}$ and $f=f^{*}$ if and only if $(s/s^{*})^{r+1} = 1$.
\item\label{item:iii} If $\varepsilon = 0$ and $\ell < r-1$, then $f =
       F(0,-1, \ell, st)_{(w)}$ and $f=f^{*}$ if and only if $w=w^{*}$ and
       $(s/s^{*})^{r+1} = 1$.
\item\label{item:iv} If $\varepsilon = 1$ and $\ell = r-1$, then $f = F(1, u, r-1,
      s)_{(0)}$ and $f=f^{*}$ if and only if $u = u^{*}$ and $s=s^{*}$.
\item\label{item:v} If $\varepsilon = 1$ and $\ell < r-1$, then $f=f^{*}$ if and only if
      $u = u^{*}$, $s=s^{*}$ and $w = w^{*}$.
\end{enumerate}
\end{proposition}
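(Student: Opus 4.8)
The plan is to reconstruct the parameters $(\varepsilon, \ell, u, s, w)$ from a few explicitly computable high-degree coefficients of $f$, exploiting that in characteristic $p$ the transformation $\phi \mapsto \phi_{(w)} = \phi(x+w) - \phi(w)$ disturbs only the lower-order coefficients. Write $\phi = F(\varepsilon, u, \ell, s) = xP^m$ with $P = x^{\ell(r+1)} - \varepsilon u s^r x^\ell + u s^{r+1}$ and $m = (r-1)/\ell$, and note that $\ell$ and $m$ are coprime to $p$ since $\ell \mid r-1$ and $p \nmid r-1$. Expanding $P^m$, the three highest distinct monomials of $\phi$ are $x^{r^2}$, then $-mus^r x^{r^2-\ell r}$ (present iff $\varepsilon = 1$), then $mus^{r+1}x^{r^2-\ell r-\ell}$, each with nonzero coefficient. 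Since $r^2 = p^{2a}$ (say $r = p^a$), we have $(x+w)^{r^2} = x^{r^2}+w^{r^2}$, and a monomial $cx^e$ of $\phi$ contributes $\binom{e}{e'}w^{e-e'}$ to the coefficient of $x^{e'}$ in $f$. A Lucas-theorem computation supplies the vanishings I need: $\binom{r^2}{j} \equiv 0$ for $0 < j < r^2$, and $\binom{r(r-\ell)}{j} \equiv 0$ whenever $0 < j < r$ (as $r(r-\ell) = p^a(r-\ell)$ has its lowest $a$ base-$p$ digits zero while such $j$ has a nonzero low digit).

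These vanishings show that the coefficients of $x^{r^2-\ell r}$ and $x^{r^2-\ell r-\ell}$ in $f$ equal those of $\phi$, namely $-mus^r$ and $mus^{r+1}$, independently of $w$ (one uses $\binom{r^2}{\ell r}\equiv 0$, $\binom{r^2}{\ell(r+1)}\equiv 0$, and $\binom{r(r-\ell)}{\ell}\equiv 0$, the last valid as $\ell < r$). Hence $D := \deg_2 f$ equals $r^2-\ell r$ if $\varepsilon = 1$ and $r^2-\ell r-\ell$ if $\varepsilon = 0$. This proves \ref{item:i}: we have $D \equiv 0 \pmod{r}$ exactly when $\varepsilon = 1$, whereas for $\varepsilon = 0$ one has $D \equiv -\ell \not\equiv 0 \pmod{r}$ (since $1 \le \ell \le r-1$); and in each regime $\ell \mapsto D$ is injective, so $D$ recovers $(\varepsilon,\ell)$ and $f = f^*$ forces $\varepsilon = \varepsilon^*$ and $\ell = \ell^*$.

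For $\ell = r-1$ the polynomial $\phi$ is additive, so $f = \phi_{(w)} = \phi$ carries no information about $w$ and I normalize $w = 0$. Reading the coefficients of $x^r$ and $x$ gives $us^r$ and $us^{r+1}$, from which $s$ and then $u$ are determined in \ref{item:iv} ($\varepsilon = 1$), so $f = f^*$ iff $u = u^*$ and $s = s^*$. In \ref{item:ii} ($\varepsilon = 0$) only $us^{r+1}$ survives; using the identity $F(0,u,\ell,s) = F(0,-1,\ell,st)$ — valid because $t^{r+1} = -u$ for $t \in T$ — I normalize $u = -1$, so the surviving datum is $s^{r+1}$ and $f = f^*$ iff $(s/s^*)^{r+1} = 1$.

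For $\ell < r-1$ the polynomial is non-additive and I must additionally recover $w$. The crux is the coefficient of $x^{r^2-\ell r-\ell-1}$: its only surviving contribution is the first-order shift of the $mus^{r+1}x^{r^2-\ell r-\ell}$ term, equal to $(r^2-\ell r-\ell)\,mus^{r+1}\,w$, a fixed nonzero multiple of $w$ since $r^2-\ell r-\ell \equiv -\ell \not\equiv 0 \pmod{p}$ and $mus^{r+1} \neq 0$. The competing contributions from the $x^{r^2}$ and $x^{r^2-\ell r}$ bands vanish by $\binom{r^2}{\cdot}\equiv 0$ and $\binom{r(r-\ell)}{\ell+1}\equiv 0$ (the latter valid as $\ell+1 \le r-1 < r$), and $\phi$ has no monomial of this degree. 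Thus $w$ is determined — recovering the vanishing criterion stated after \autoref{cor:transforms} — and together with $us^r, us^{r+1}$ read off as before this gives $u,s,w$ in \ref{item:v} and $s^{r+1}, w$ (after normalizing $u=-1$) in \ref{item:iii}. Each converse direction follows by substituting matching parameters into the explicit formulas, using that $F(0,-1,\ell,\cdot)$ depends only on the $(r+1)$-st power of its last argument. The main obstacle throughout is exactly this control of the shift: every coefficient I read must be either exactly preserved or an injective linear function of $w$, and the delicate point is that for $\varepsilon = 1$ the naive first-order term of the second-degree monomial has coefficient $\binom{r^2-\ell r}{r^2-\ell r-1} = r^2-\ell r \equiv 0 \pmod{p}$, which is why $w$ must be extracted from one band lower.
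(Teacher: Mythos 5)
Your proof is correct and follows essentially the same route as the paper's: read off $\varepsilon$ and $\ell$ from $\deg_2 f$, exploit additivity when $\ell=r-1$, recover $w$ from the coefficient of $x^{r^2-\ell r-\ell-1}$ (an injective affine function of $w$) when $\ell<r-1$, recover $u,s$ from $f_{r^2-\ell r}$ and $f_{r^2-\ell r-\ell}$ when $\varepsilon=1$, and use $t^{r+1}=-u$ to reduce to $F(0,-1,\ell,st)$ when $\varepsilon=0$. The only difference is presentational: you justify the invariance of the relevant coefficients under the shift $x\mapsto x+w$ via explicit Lucas-type binomial vanishings, where the paper uses $O(\cdot)$ bookkeeping of the expansion.
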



\begin{proof}
We have
\begin{align*}
  f & = F(\varepsilon, u, \ell, s)_{(w)} \\
& = x(x^{\ell(r+1)m} - m\varepsilon us^{r}x^{\ell(r+1)(m-1)+\ell} +
  mu s^{r+1}x^{\ell (r+1)(m-1)} \\
& \quad + O(x^{\ell (r+1)(m-2)+2\ell})) \\
&= x^{r^{2}} - m \varepsilon u s^{r} x^{r^{2}-\ell r} +m u s^{r+1}
x^{r^{2}-\ell r - \ell} + O(x^{r^{2}-2\ell r}),
\end{align*}
\begin{equation}
\label{eq:36}
\begin{split}
  f_{r^{2}-\ell r} & = -m\varepsilon us^{r},  \\
  f_{r^{2} - \ell r-\ell} & = mus^{r+1}.
\end{split}
\end{equation}
Therefore
\begin{equation*}
\deg_{2} f = \begin{cases}
r^{2} - \ell r & \text{ if } \varepsilon =1, \\
r^{2} -\ell r - \ell & \text{ if } \varepsilon = 0.
\end{cases}
\end{equation*}
Furthermore, $p \nmid r-1 = \ell m$, so that $p \nmid \ell$. We have $\deg_{2} f = \deg_{2} f^{(*)}$ and
  $\varepsilon = 1$ if and only if $r$ divides $\deg_{2} f$. For both values
  of $\varepsilon$, $\deg_{2} f$ determines $\ell$ uniquely.  This proves \ref{item:i}.

For $\ell = r-1$, $f$ is additive and therefore
  \begin{align*}
    f & =  F(\varepsilon, u, r-1, s)_{(w)}\\
& = (x - F(\varepsilon, u, r-1,s)
   (w))  \circ F(\varepsilon, u, r-1,  s)(x) \circ (x+w) \\
 & = (x - F(\varepsilon, u, r-1,  s)(w))   \circ ( F(\varepsilon, u, r-1, 
 s)(x) + F(\varepsilon, u, r-1,  s)(w)) \\
&  =F(\varepsilon, u, r-1,  s)_{(0)}
  \end{align*}
for all $w \in \Fq$.

For $\ell < r-1$ the coefficient of $x^{r^{2}-\ell r - \ell - 1}$ in  $F(\varepsilon, u,
  \ell, s)_{(w)}$ equals
  \begin{equation*}
F(\varepsilon, u, \ell, s)_{r^{2}-\ell r - \ell - 1} +
    w (r^{2}-\ell r -\ell)mus^{r+1},
  \end{equation*}
and $(r^{2}-\ell r -\ell)mus^{r+1} \neq 0$.  Therefore, $F(\varepsilon, u, \ell, s)_{(w)}  = F(\varepsilon,
  u, \ell, s)_{(w^{*})}$ if and only if $w = w^{*}$.

 For $\varepsilon=1$, we find from \eqref{eq:36} that  $s =  -
  f_{r^{2}-\ell r - \ell} /   f_{r^{2}-\ell r} $ and $u =   f_{r^{2} -\ell r} / (
  -ms^{r})$ depend only on $f$.

 For $\varepsilon = 0$, we have $t^{r+1} = -u$ and 
  \begin{equation*}
    F(0, u,\ell, s)_{(w)} = \left(x (x^{\ell(r+1)} - (st)^{r+1})^{m}\right)_{(w)} =  F(0, -1, \ell, st)_{(w)}.
  \end{equation*}

Consider $F(0, -1,
      \ell, s)_{(w)} = F(0, -1,
      \ell, s^{*})_{(w)}$, divide by $x$, extract $m$th roots and find by
      coefficient comparison $s^{r+1} = {s^{*}}^{r+1}$.

Combining the observations for $\ell = r -1$, $\ell < r-1$ and $\varepsilon =0
$, $\varepsilon = 1$, respectively proves the claims for the four cases \ref{item:ii}-\ref{item:v}.


\end{proof}

\begin{corollary}
  Let $p, q, r$ as in \autoref{thm:nonadd}, $\gamma = \gcd(r+1, q-1)$, $i \in \{2, r+1\}$, and $N_{i}$ the
  number of $i$-collisions of the form described in \autoref{cor:transforms}.
  Then 
  \begin{equation*}
    N_{i}  =  \left(1 -q  + q \cdot d(r-1) \right) \left( c_{q,r,i}^{(2)} + \delta_{\gamma, i} \frac{
        q-1}{\gamma}  \right),
  \end{equation*}
  where $d(r-1)$ is the number of divisors of $r-1$, $\delta_{i,j}$ is Kronecker's delta,  and $c_{q,r,i}^{(2)}$
  are determined in \autoref{cor:bluher}.
\end{corollary}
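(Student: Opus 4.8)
The plan is to compute $N_i$ as the number of \emph{distinct} polynomials $f = F(\varepsilon, u, \ell, s)_{(w)}$ that constitute $i$-collisions, since each such $f$ carries exactly one collision $\{(g_{(w)}, h_{(w)}) \colon t \in T\}$ of the prescribed shape and $\#T$ is governed by $\varepsilon$ and $u$ alone. I would organize everything around the parameter tuple $(\varepsilon, u, \ell, s, w)$ and use \autoref{lem:unique1} to pass from parameters to genuinely distinct polynomials. Part \ref{item:i} of that proposition shows $\varepsilon$ and $\ell$ are invariants of $f$, so I first split the count according to $\varepsilon \in \{0,1\}$ (the two families are disjoint) and then according to the divisor $\ell \mid (r-1)$.

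Next I would isolate the common factor $1 - q + q\, d(r-1)$ arising from the $(\ell, w)$-freedom. In the additive case $\ell = r-1$ (so $m=1$), parts \ref{item:ii} and \ref{item:iv} show the shift by $w$ produces nothing new, contributing a single polynomial. For each of the remaining $d(r-1) - 1$ divisors $\ell < r-1$, parts \ref{item:iii} and \ref{item:v} show that $w \mapsto f_{(w)}$ is injective on $\FF_q$ (the coefficient of $x^{r^2 - \ell r - \ell - 1}$ separates the shifts), contributing $q$ polynomials each. Summing gives $1 + (d(r-1)-1)q = 1 - q + q\, d(r-1)$, and since the number of admissible shape parameters counted below is independent of $\ell$, this factors out cleanly.

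The heart of the argument is counting the admissible shape parameters with $\#T = i$ in each $\varepsilon$-family. For $\varepsilon = 1$ the set $T$ is exactly the set of $\FF_q$-roots of $\Psi_2^{(-u,u)} = t^{r+1} - ut + u$, and parts \ref{item:iv}/\ref{item:v} show $(u,s)$ ranges freely: distinct $(u,s)$ give distinct $f$, and $s \in \FF_q^\times$ is unconstrained. To connect this with the counts $c_{q,r,i}^{(2)}$ of \autoref{cor:bluher}, I would use the scaling action $(a,b) \mapsto (a\lambda^{-r}, b\lambda^{-(r+1)})$ for $\lambda \in \FF_q^\times$ on $C^{(2)}$: it preserves the number of $\FF_q$-roots of $\Psi_2^{(a,b)}$, acts freely (since $\gcd(r,r+1)=1$) with orbits of size $q-1$, and each orbit has a \emph{unique} representative with $a=-b$, realized at $\lambda = -b/a$ and of the form $(-u,u)$ with $u \in \FF_q^\times$. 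Hence the number of $u$ with $\Psi_2^{(-u,u)}$ having exactly $i$ roots is $c_{q,r,i}^{(2)}/(q-1)$, and multiplying by the $q-1$ free choices of $s$ yields precisely $c_{q,r,i}^{(2)}$ admissible pairs.

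For $\varepsilon = 0$ the identity $F(0,u,\ell,s) = F(0,-1,\ell,st)$ (valid exactly when $T \neq \emptyset$) absorbs $u$ into $s$, so $f$ depends on $(u,s)$ only through $c = u s^{r+1}$, and parts \ref{item:ii}/\ref{item:iii} confirm $f$ is determined by $c$ together with $\ell, w$. Here $T = \{t \colon t^{r+1} = -u\}$ has size $\gcd(r+1,q-1) = \gamma$ when $-u \in (\FF_q^\times)^{r+1}$ and is empty otherwise; thus this family produces $i$-collisions only when $\gamma = i$, which accounts for the Kronecker delta. The admissible values of $c$ fill the coset $-(\FF_q^\times)^{r+1}$, of which there are $(q-1)/\gamma$, giving the term $\delta_{\gamma, i}\,(q-1)/\gamma$. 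Combining the two families with the common $(\ell, w)$-factor produces the claimed formula. I expect the main obstacle to be the bookkeeping for $\varepsilon = 0$: verifying that the reparametrization $u \mapsto -1$ is well defined exactly on the $i$-collision locus and compatible with the identifications of \autoref{lem:unique1}, so that no value of $c$ is over- or under-counted; by comparison the $\varepsilon = 1$ orbit argument should be routine once the free scaling action is in place.
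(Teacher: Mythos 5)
Your proposal is correct and follows essentially the same route as the paper's proof: split by $\varepsilon$ and $\ell$, count the $w$-shifts via \autoref{lem:unique1} to extract the factor $1-q+q\,d(r-1)$, reduce the $\varepsilon=1$ family to $c_{q,r,i}^{(2)}$ through a scaling bijection between roots of $t^{r+1}-ut+u$ and of $\Psi_2^{(a,b)}$, and count the $\varepsilon=0$ family by $(r+1)$th powers, yielding $\delta_{\gamma,i}(q-1)/\gamma$. Your orbit-representative formulation of the scaling action is just a repackaging of the paper's explicit correspondence $u=a^{r+1}b^{-r}$ with $q-1$ pairs $(a,b)$ per $u$, so the two arguments coincide in substance.
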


\begin{proof}
  For $\varepsilon =0$, $f$ is an $i$-collision, only if $y^{r+1}=1$ has
  exactly $i$ solutions, according to \autoref{lem:unique1} \ref{item:ii} and
  \ref{item:iii}.  Generally, this equation has exactly $\gamma = \gcd (r+1,
  q-1)$ solutions in $\Fq^{\times}$.  Furthermore there are $(q-1)/\gamma$
  values for $s \in \Fq^{\times}$ which yield pairwise different
  $s^{r+1}$. The number of $i$-collisions of the form described in
  \ref{item:ii} is therefore $\delta_{\gamma, i} \cdot (q-1) / \gamma$, and of
  the form described in \ref{item:iii}
$ \delta_{\gamma, i}   q (d(r-1)-1)(q-1) / \gamma$, tacking into account the
$(d(r-1)-1)$ possible divisors $\ell$ and $q$ choices for $w$.

For $\varepsilon = 1$, we have to consider $u$, such that $y^{r+1} - uy + u
\in \Fq [y]$ has exactly $i$ roots.  Let $a,b \in
  \Fq^{\times}$ and $u = a^{r+1}b^{-r}$.  The invertible transformation $x
  \mapsto y= -ab^{-1}x$ gives a
  bijection
  \begin{equation*}
    \{ t \in \Fq^{\times} \colon  t^{r+1} - ut + u = 0\} \leftrightarrow \{ \tau \in
    \Fq^{\times} \colon \tau^{r+1} + a\tau + b = 0\}.
  \end{equation*}
Every value of $u$ corresponds to exactly $q-1$ pairs $(a,b)$, namely an
arbitrary $a \in \Fq^{\times}$ and $b$ uniquely determined as $b^{r} =
u^{-1}a^{r+1}$.  \autoref{pro:equivalent} and the definition of $c_{q,r,i}^{(2)}$
yield $c_{q,r,i}^{(2)}/(q-1)$
 values for $u$. Therefore the number of $i$-collisions is $c_{q,r,i}^{(2)}$ for the form described
 in \ref{item:iv}, and
$c_{q,r,i}^{(2)}q (d(r-1)-1)$
for the form described in \ref{item:v}.
\end{proof}

\Citet{gat08c}, Lemma~3.29, determines $\gcd(r+1, q-1) $ explicitely.

\begin{conjecture}
\label{conj:NonaddAsConstructed}
Any maximal $i$-collision with $i \geq 2$ at degree $p^{2}$ is
either a Frobenius collision or of the form described in \autoref{cor:transforms}.
\end{conjecture}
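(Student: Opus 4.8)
The plan is to fix a monic original $f\in\FF_q[x]$ of degree $p^{2}$ that carries a maximal $i$-collision with $i\geq 2$ and to show it must lie in one of the two announced families. After the normalization of \autoref{sec:InNo} every pair may be taken monic original; since $\deg g\cdot\deg h=p^{2}$ with both factors at least $2$ forces $\deg g=\deg h=p$, and since $f$ together with $h$ determines $g$, the collision is encoded by $i$ pairwise distinct monic original right components $h_{1},\dots,h_{i}$ of degree $p$. The first step is a clean dichotomy driven by separability: a monic original degree-$p$ polynomial $h$ satisfies $h'\neq0$ unless $h=x^{p}$, and over the finite field $\FF_q$ every $f$ admitting $x^{p}$ as a right component is a $p$-th power $f=h^{p}$, hence yields exactly the Frobenius collision $\{(x^{p},h),(\sigma_{p}(h),x^{p})\}$ recalled in \autoref{sec:InNo}. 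So I would dispatch the case ``some $h_{j}=x^{p}$'' immediately and concentrate on the \emph{separable} case in which every $h_{j}\neq x^{p}$, aiming to prove $f=F(\varepsilon,u,\ell,s)_{(w)}$.

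In the separable case I would first use the translation action $f\mapsto f_{(w)}$, which preserves the collision and its size by \autoref{cor:transforms}, to normalize to the representative singled out before \autoref{lem:unique1} by the vanishing of the coefficient of $x^{r^{2}-\ell r-\ell-1}$ (with $r=p$). The organizing principle is the projective-polynomial bridge: the target polynomials factor as $f=x\cdot\Psi_{2}^{(a,b)}(x^{\ell})^{m}$ with $a=-\varepsilon us^{r}$, $b=us^{r+1}$ and $\ell m=r-1$, and the collision size equals $\#T$, which by the bijection in the corollary to \autoref{cor:transforms} is the number of roots of a projective polynomial and hence lies in $\{0,1,2,r+1\}$ by \autoref{pro:equivalent} and \autoref{cor:bluher}. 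The task is therefore to recover the parameters from $f$ and to exhibit the $h_{j}$ as the $H(\ell,s,t_{j})$ indexed by these roots. The leading invariants already do part of this: as in the proof of \autoref{lem:unique1}, $\deg_{2}f$ and its divisibility by $r$ determine $\ell$ and $\varepsilon$, and the two top off-diagonal coefficients $f_{r^{2}-\ell r}$ and $f_{r^{2}-\ell r-\ell}$ from \eqref{eq:36} determine $u$ and $s$.

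The decisive step is to prove that these top coefficients \emph{propagate}: comparing all coefficients in $g_{1}\circ h_{1}=g_{2}\circ h_{2}$ should force each $h_{j}$ to be supported on the progression of exponents $\{1,\ell+1,2\ell+1,\dots,r\}$ and to have the binomial shape $x(x^{\ell}-st_{j})^{m}$, with $t_{j}$ ranging over the roots of the associated $\Psi_{2}^{(a,b)}$. I would prove this by downward induction on the exponents, maintaining the invariant that below the current degree each $h_{j}$ already agrees with the candidate $H(\ell,s,t_{j})$ and using the \emph{second} decomposition to close each inductive step, since a single decomposition leaves too many coefficients of $h$ free while the collision relation $h_{1}\neq h_{2}$ supplies the missing equations. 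As an independent consistency check I would verify that the resulting number of separable $i$-collisions equals $N_{i}$ from the corollary to \autoref{cor:transforms}; because the constructed collisions are genuine, an exact count matching $N_{i}$ would rule out any further collisions.

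The main obstacle is exactly this propagation in characteristic $p$. The coefficient comparison yields a recursively solvable but highly degenerate system, because many binomial coefficients vanish modulo $p$, so that a coefficient of $f$ no longer pins down the expected coefficient of $h$ but only imposes a relation among several of them. Controlling all such relations simultaneously, and in particular excluding collisions whose components fail to have progression support, is the crux and the reason the statement is only conjectured; for $r=p$ this is precisely the analysis underlying the supporting results announced in \autoref{sec:general}, and the plan is to carry that induction through to completion.
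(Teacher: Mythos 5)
The statement you are trying to prove is stated in the paper only as a \emph{conjecture}: the authors offer no proof, only experimental verification for $q\leq 9$ and the partial results at the end of \autoref{sec:general} (existence of the common second-degree $k$, the projective equation satisfied by $h_k$, the bound $i\leq p+1$ in \autoref{pro:arbitr-coll}, and the exclusion of $1<k\leq p/2$ in \autoref{pro:nosmallk}). Your proposal does not close this gap either. The reductions you describe --- dispatching the case $h_j=x^p$ via the Frobenius collision, normalizing by translation, reading off $\ell$, $\varepsilon$, $u$, $s$ from $\deg_2 f$ and the two top off-diagonal coefficients --- reproduce what the paper already establishes (essentially \autoref{pro:arbitr-coll} with $k=r-\ell$, plus the parameter recovery in the proof of \autoref{lem:unique1}). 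The decisive step, proving that every right component in a non-Frobenius collision is supported on the exponent progression $\{1,\ell+1,\dots,r\}$ and has the binomial shape $x(x^{\ell}-st_j)^m$, is exactly the open problem; you name it, outline a downward induction on coefficients, and then concede that the degeneracies caused by vanishing binomial coefficients modulo $p$ prevent you from carrying it out. A plan whose central step is acknowledged to be unproven is not a proof.

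Two further points. First, your proposed ``independent consistency check'' is circular: comparing the number of collisions you have constructed with $N_i$ cannot rule out additional collisions, because $N_i$ is by definition the count of collisions of the constructed form; to exclude others you would need an independent \emph{upper} bound on the total number of $i$-collisions, and the paper explicitly remarks that only the lower bound $N_2+N_{r+1}+q^{r-1}-1$ is known and that the conjecture is precisely the claim that this is also an upper bound. Second, even the small reduction you take for granted --- that a decomposition with right component $x^p$ can only sit inside the $2$-element Frobenius collision --- is asserted in \autoref{sec:InNo} without proof, so a self-contained argument would have to supply that as well.
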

The conjecture has been experimentally verified for $q \leq 9$ using Sage.

There are $q^{r-1}$ Frobenius collisions and all but $x^{r^{2}}=x^{r} \circ
x^{r}$ are maximal 2-collisions.  The number of maximal $i$-collisions with
$i\geq 2$ is therefore bounded from below by
\begin{equation*}
 N_{2} + N_{r+1} + q^{r-1} -1. 
\end{equation*}
The conjecture claims that this is also an upper bound.




In the following, we present partial results on this conjecture, concentrating
on the simplest case $r=p$.  We also give an upper bound on the number of
decompositions a single polynomial can have in the case of degree $p^{2}$.  No
nontrivial estimate seems to be in the literature.


\begin{proposition}
\label{pro:arbitr-coll}
Let $C$ be a non-Frobenius $i$-collision over $\FF_{q}$ with $i \geq 2$ at degree
$p^{2}$.  There is an integer $k$ with $1 \leq k < p$ and the
following properties for all $(g,h) \in C$.
\begin{enumerate}
\item\label{item:i} $\deg_{2} (g) = \deg_{2} (h) = k$.
\item\label{item:ii} For all $(g^{*},h^{*}) \in C$ with $(g,h) \neq (g^{*},
h^{*})$, we have $g_{k} \neq g^{*}_{k}$ and $h_{k} \neq h^{*}_{k}$.
\item\label{item:iii} Set $a=-f_{kp}$ and $b = k^{-1}f_{kp-p+k}$. Then
$b h_k \neq 0$, and
\begin{gather}
h_{k}^{p+1}+ah_{k}+b = 0 \\
g_{k}=-a-h_{k}^{p} = bh_{k}^{-1}.    
\end{gather}
\item\label{item:iv} $i \leq p+1$.
\end{enumerate}
\end{proposition}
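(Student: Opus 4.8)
The plan is to read off the top two coefficients of $f$ from a decomposition and to exploit the derivative. Write each pair of $C$ as $g = x^{p}+\sum_{0<j<p}g_j x^j$ and $h = x^{p}+\sum_{0<j<p}h_j x^j$, monic original of degree $p$; since $C$ is non-Frobenius, neither component is $x^{p}$ (a component $x^{p}$ would make $f=h^{p}$, whose only degree-$p$ decompositions form the maximal Frobenius $2$-collision), so $1\le \deg_2 g,\deg_2 h\le p-1$, which already gives the range $1\le k<p$ of \ref{item:i}. Writing $\eta = h-x^{p}$ and using that the $p$-th power is additive,
\begin{equation*}
 f = g\circ h = x^{p^{2}} + \sum_{0<j<p} h_j^{p}x^{jp} + \sum_{0<j<p} g_j h^{j},
\end{equation*}
where the first sum sits at degrees divisible by $p$. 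Differentiating kills $x^{p^{2}}$ and that sum, so $f' = g'(h)\,h'$ with $\deg g' = \deg_2 g-1$ and $\deg h' = \deg_2 h-1$, both having nonzero leading coefficient because $1\le\deg_2 g,\deg_2 h<p$. Hence $\deg f' = (\deg_2 g-1)p+(\deg_2 h-1)$ with leading coefficient $(\deg_2 g)(\deg_2 h)g_{\deg_2 g}h_{\deg_2 h}$, and since $0\le\deg_2 h-1<p$, division of $\deg f'$ by $p$ recovers both $\deg_2 g$ and $\deg_2 h$. As $f$ is common to the whole collision, $\deg_2 g$ and $\deg_2 h$ take the same value for every pair in $C$.

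It remains to prove $\deg_2 g=\deg_2 h$. Set $k_g=\deg_2 g$, $k_h=\deg_2 h$; by the previous paragraph these are constant over $C$, so either $k_g=k_h$ for all pairs or $k_g\ne k_h$ for all pairs. In the unbalanced case $k_g\ne k_h$ the two leading coefficients $g_{k_g},h_{k_h}$ are already forced by $f$ (one from $f_{\max(k_g,k_h)p}$, the other from the leading coefficient of $f'$), and matching successively lower coefficients of $f$ then determines all remaining $g_j$ and $h_j$ from the top down; the decomposition would be unique, contradicting $i\ge 2$. Hence $k_g=k_h$ throughout, and $k:=\deg_2 g=\deg_2 h$ proves \ref{item:i}. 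I expect this top-down reconstruction to be the main obstacle; the factorization $f'=g'(h)h'$ is what makes the dependence of $f$ on the individual coefficients transparent enough to solve it triangularly.

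With the common $k$ fixed, a direct count in the displayed expansion gives $f_{kp}=g_k+h_k^{p}$ (the degree-$kp$ contributions are $h_k^{p}$ from the first sum and $g_k$ from the $j=k$ term of the second) and $f_{kp-p+k}=k\,g_k h_k$ (only the term $\binom{k}{1}x^{(k-1)p}\eta$ of $h^{k}$ reaches degree $(k-1)p+k$, contributing $k h_k$, times $g_k$). Putting $a=-f_{kp}$ and $b=k^{-1}f_{kp-p+k}=g_k h_k$ then gives $g_k=-a-h_k^{p}$ and $g_k=bh_k^{-1}$ at once, and eliminating $g_k$ yields $h_k^{p+1}+ah_k+b=0$; since $g_k,h_k\ne 0$ we have $bh_k=g_k h_k^{2}\ne 0$, which is \ref{item:iii}. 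For \ref{item:ii}, the relation $g_k=bh_k^{-1}$ with $b\ne 0$ shows that $h_k$ and $g_k$ determine each other, and fixing the value $h_k$ the same top-down reconstruction (now with the leading second-coefficient prescribed) determines $h$, after which $g$ is forced because right composition by the non-constant $h$ is injective. Thus $(g,h)\mapsto h_k$ is injective on $C$, so distinct pairs have distinct $h_k$ and distinct $g_k$. Finally, \ref{item:iv} is immediate: by \ref{item:iii} every $h_k$ is a root of $\Psi_2^{(a,b)}=x^{p+1}+ax+b$, a polynomial of degree $p+1$, and by \ref{item:ii} the $i$ pairs give $i$ distinct roots, whence $i\le p+1$.
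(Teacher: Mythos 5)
Your proof is correct and follows essentially the same route as the paper's: determine $(\deg_2 g,\deg_2 h)$ from $f$, invoke the unique top-down reconstruction of $(g,h)$ from the leading second-coefficients to force $\deg_2 g=\deg_2 h$ and to separate the pairs of $C$ by their $h_k$, and then read parts (iii)--(iv) off the coefficients of $x^{kp}$ and $x^{kp-p+k}$ exactly as the paper does. The only cosmetic difference is that you locate $(\deg_2 g,\deg_2 h)$ via $\deg f'$ and the factorization $f'=g'(h)\,h'$, whereas the paper reads the same data off the highest-degree term of $f$ whose exponent is not divisible by $p$; the reconstruction step you flag as the main obstacle is the same one the paper settles by the linear back-substitution $uh_i=v$ with $u\neq 0$, citing Algorithm 4.9 of \cite{gat09b}.
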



\begin{proof}
  We write
\begin{align*}
g & = x^p + g_\ell x^\ell + \dots + g_1 x, \\
h & = x^p + h_m x^{m}  + \dots + h_1 x, \\
f = g\circ h & = x^{p^{2}} + f_{{p^{2}-1}}x^{p^{2}-1} + \dots + f_{1}x,
\end{align*}
with all $f_{i}, g_{i}, h_{i}\in \FF_{q}, 1 \leq \ell,m < p$ and $g_\ell h_m
\neq 0$.  For $u,v \in \FF_{q}[x]$ and $e \in \NN$, we write $u = v +
O(x^{e})$ if $\deg (u-v) \leq e$.  Similarly, $(O(x^{e}))^{p}$ indicates
a polynomial $w$ with $\deg w \leq 
e$ such that $u=v+w^{p}$.

The highest terms in $h^{\ell}$ and $g \circ h$ are
\begin{align}
h^{\ell} & = (x^{p}+h_{m}x^{m}+O(x^{m-1}))^{\ell}  \nonumber \\
& = x^{\ell p} +
\ell h_{m}x^{(\ell-1)p+m} + O(x^{(\ell-1)p+m-1}), \nonumber \\
g \circ h & =  x^{p^{2}} + h_{m}^{p} x^{m p} +
(O(x^{m-1}))^{p} + g_{\ell} x^{\ell p} + \ell g_{\ell}h_{m}x^{(\ell-1)p+m} \nonumber \\
& \quad +
O(x^{(\ell-1)p+m -1}) + O(x^{(\ell-1)p}). \label{eq:star}
\end{align}
Thus the highest term $f_{i}x^{i}$ in $f$ with $f_{i} \neq 0$ and $p
\nmid i$ occurs for $i = (\ell-1)p + m$.  Since $1 \leq \ell,m <p$
$(\ell,m)$ is determined by $f$ and identical for all $(g,h) \in C$.
Algorithm 4.9 of \cite{gat09b} computes
the components $g$ and $h$ from $f$, provided that $h_{p-1}\neq 0$. We
do not assume this, but can apply the same method.  Once $g_{\ell}$ and
$h_{m}$ are determined, the remaining coefficients first of $h$,
then of $g$, are computed by solving a linear equation of the form
$uh_{i}=v$, where $u$ and $v$ are known at that point, and $u \neq
0$.  Quite generally, $g$ is determined by $f$ and $h$.  Now take some
$(g^{*},h^{*})\in C$. If $(g_{\ell},h_{m})=(g^{*}_{\ell}, h^{*}_{m})$,
then $(g,h)=(g^{*},h^{*})$ by the uniqueness of the procedure just sketched.  
Inspection of the coefficient of
$x^{(\ell-1)p+m}$ in \eqref{eq:star} shows that $g_{\ell} = g^{*}_{\ell}$ if
and only if $h_{m}= h^{*}_{m}$.  Furthermore, $\deg_{2}(g\circ
h)$ is either $m p$ or $\ell p$.  If these two integers are
distinct, then either $h_{m}^{p}$ (and hence $h_{m}$) is
determined by $f$, namely if $m > \ell$, and otherwise $g_{\ell}$ is.  In
either case, we can conclude from the above that $(g,h) =
(g^{*},h^{*})$.  Since $(g,h) \neq (g^*, h^*)$ this shows $\ell= m$, and
\ref{item:i} and \ref{item:ii} for $k=\ell$.

For \ref{item:iii}, we find from \eqref{eq:star},
\begin{align*}
f_{kp} & = h_{k}^{p} + g_{k} , \\
f_{kp-p+k} & = kg_{k}h_{k} = kh_{k} (f_{kp}-h_{k}^{p}) = -kh_{k}^{p+1} +kf_{kp}h_{k}.
\end{align*}

The $i$ distinct (see \ref{item:ii}) values $h_{k}^{(i)}$ are solutions to a
degree $p+1$ equation in $h_{k}$.  This proves \ref{item:iv}.

\end{proof}

We have $k=1$ for additive polynomials, and $k=r - \ell$ in
\autoref{thm:nonadd}.

\begin{proposition}
\label{pro:nosmallk}
Let $C$ be a non-Frobenius $i$-collision over $\Fq$ with $i \geq 2$ at degree
$p^{2}$, and $k$ the integer defined in Proposition~\ref{pro:arbitr-coll}. Then $k = 1$ or $k > p/2$.
\end{proposition}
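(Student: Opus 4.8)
The plan is to rule out the band $2 \le k \le p/2$ by extracting one coefficient of $f$ beyond the two already exploited in \autoref{pro:arbitr-coll}, and then to play it against the fact that an $i$-collision with $i \ge 2$ contains pairs with pairwise distinct $h_k$. First I would dispose of $p=2$ at once, since there $k < p = 2$ already forces $k = 1$. So assume $p$ is odd and, for a contradiction, that $2 \le k \le p/2$; as $k$ is an integer and $p$ is odd, this is equivalent to $2 \le k$ together with $2k \le p-1$.

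The core computation continues the expansion \eqref{eq:star} one step further, to the coefficient $f_{(k-2)p+2k}$. By \autoref{pro:arbitr-coll}\ref{item:i} both components have second-degree $k$, so $g = x^{p}+g_{k}x^{k}+O(x^{k-1})$ and $h = x^{p}+h_{k}x^{k}+O(x^{k-1})$ with $g_k h_k \neq 0$. In $g \circ h = h^{p} + g_k h^{k} + \sum_{j<k} g_j h^{j}$ the monomial $x^{(k-2)p+2k}$ can be produced only inside $g_k h^{k}$, by selecting the term $h_k x^{k}$ from exactly two of the $k$ copies of $h$ and $x^{p}$ from the remaining $k-2$, which contributes $\binom{k}{2} g_k h_k^{2}$. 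The crux is to confirm that $2k \le p-1$ makes this contribution unique: selecting a single non-$x^p$ monomial from one factor lands in the strictly higher band of exponents $(k-1)p + \{1,\dots,k\}$; selecting $h_k x^{k}$ from three or more factors, together with all summands $g_j h^{j}$ for $j<k$, produces only exponents different from $(k-2)p+2k$; and $h^{p}$ lives on multiples of $p$, whereas $0 < 2k < p$. This bookkeeping --- entirely parallel to the derivation of \eqref{eq:star}, and the one place where I expect to spend real care --- gives
\[
f_{(k-2)p+2k} = \binom{k}{2}\, g_k h_k^{2}.
\]

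To finish, I would substitute $g_k = b h_k^{-1}$ from \autoref{pro:arbitr-coll}\ref{item:iii} (where also $b \neq 0$), turning the identity into $f_{(k-2)p+2k} = \binom{k}{2}\, b\, h_k$. Its left-hand side is a coefficient of the single polynomial $f$, hence identical for every pair in the collision, while by \autoref{pro:arbitr-coll}\ref{item:ii} the scalars $h_k$ are pairwise distinct. Comparing any two pairs of the $i$-collision (which exist since $i \ge 2$) yields $\binom{k}{2}\, b\,(h_k - h_k^{*}) = 0$, and with $b \neq 0$ and $h_k \neq h_k^{*}$ this forces $p$ to divide $\binom{k}{2} = k(k-1)/2$. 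But for odd $p$ and $2 \le k \le (p-1)/2$ none of $k$, $k-1$, and $2$ is divisible by $p$, so $p \nmid \binom{k}{2}$ --- a contradiction. Therefore $k = 1$ or $k > p/2$, as claimed. I note that this is exactly consistent with \autoref{thm:nonadd}, where the constructed collisions have $k = p-\ell$ with $\ell \mid p-1$, so that $k=1$ or $k \ge (p+1)/2 > p/2$.
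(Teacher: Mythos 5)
Your proposal is correct and follows essentially the same route as the paper: both expand $h^{k}$ one step further to isolate the coefficient of $x^{kp-2p+2k}$, verify that for $k\le p/2$ only the term $\binom{k}{2}g_{k}h_{k}^{2}$ contributes, and conclude that $h_{k}$ would be determined by $f$, contradicting \autoref{pro:arbitr-coll}\ref{item:ii}. The only cosmetic differences are that you substitute $g_{k}=bh_{k}^{-1}$ from part \ref{item:iii} rather than $g_{k}=k^{-1}f_{kp-p+k}h_{k}^{-1}$ (these are the same relation) and that you phrase the final step as $p\nmid\binom{k}{2}$, where the paper simply notes the binomial coefficient is nonzero.
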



\begin{proof}
  We expand $h^{k}$ some further
\begin{align*}
h^{k} & = (x^{p} + h_{k} x^{k} + h_{k-1} x^{k-1} + \dots + h_{1}
x)^{k} \\
& = x^{kp} + kx^{p(k-1)}(h_{k}x^{k}+\dots + h_{1}x) \\
& \quad + \binom{k}{2}x^{p(k-2)}(h_{k}x^{k}+\dots + h_{1}x)^{2} +
O(x^{p(k-3)+3k}).
\end{align*}
The coefficient of $x^{kp-2p+2k}$ is $\binom{k}{2}h_{k}^{2}$ from the
last line, plus $kx^{kp-p+i}\cdot h_{i}$ if $kp-p+i=kp-2p+2k$ from
the previous line.  The latter means $i=2k-p$.  Now assume that $k \leq
p/2$. Then $i \leq 0$, so that only the last line contributes.  No
other summand in $g\circ h$ contributes to the coefficient of
$x^{kp-2p+2k}$ in $f$, and therefore
\begin{align*}
f_{kp-p+k} & = kg_{k}h_{k} , \\
f_{kp-2p+2k} & = g_{k} \binom{k}{2} h_{k}^{2} = \binom{k}{2} k^{-1}
f_{kp-p+k} h_{k}.
\end{align*}
The binomial coefficient and $f_{kp-p+k}$ are nonzero, and it follows that $h_{k}$ has
the same value for all $(g,h)\in C$.  By Proposition~\ref{pro:arbitr-coll}\ref{item:ii}, this is
false. 
\end{proof}

This shows that there are no collisions at degree $p^2$ with $k=2$ if $p>3$ nor with
$k=3$ if $p>5$.

\section{Conclusion and open questions}\label{sec:conclusion}
We have presented composition collisions with component degrees $(r,r)$  for
polynomials $f$ of degree $r^{2}$,
and observed a fascinating interplay between these examples---quite
distinct in the additive and the $f_{r^2-r-1}\neq 0$ cases---and
\citeauthor{abh97}'s
projective polynomials and \citeauthor{blu04a}'s statistics on their
roots. Furthermore, we showed that our examples comprise all
possibilities in the additive case, and provided large classes of examples in
general.  Showing the completeness of our examples in the general case is the
main challenge left open here as \ref{conj:NonaddAsConstructed}. 

Generalizations go in two directions. One is degree $r^{k}$ for $k
\geq 3$. Additive polynomials are of special interest here, and the
rational normal form of the Frobenius automorphism will play a major
role. For general polynomials, the approximate counting problem is solved in \cite{gat09b} with
a relative error of about $q^{-1}$, and it is desirable to reduce this, say to
$q^{-r+1}$.

The second direction is to look at degree $ar^{2}$ with $r \nmid
a$. Now there are no additive polynomials, but for approximate
counting, the best known relative error can be as large as $1$. It
would be interesting to also push this below $q^{-1}$, or even
$q^{-r+1}$.

In some sections, we assume the field size $q$ to be a power of the parameter
$r$.  As in \citeauthor{blu04a}'s (2004) work, our methods go
through for the general situation, where $q$ and $r$ are independent powers of
the characteristic.  

With respect to additive polynomials, a more thorough computational
investigation of projective polynomials is warranted.  Automatic generation of
Bluher-like equations for higher degree projective polynomials should be
possible, as would be a more exact understanding of their possible collision
numbers.

\section{Acknowledgments}

The authors thank Toni Bluher for telling us about the  applications of projective
polynomials, and an anonymous referee for pointing us to \cite{HelKho10}.

The work of Joachim von zur Gathen and Konstantin Ziegler was
supported by the B-IT Foundation and the Land Nordrhein-Westfalen. The
work of Mark Giesbrecht was supported by NSERC Canada and MITACS.

\bibliographystyle{cmc}

\bibliography{journals,refs,lncs}


\end{document}